 \newcommand{\obra}[3]{{\sc #1} {\em #2}. {#3}.}
\newtheorem{theorem-main}{\bf Theorem}
\newtheorem{theorem}{\bf Theorem}
 \newtheorem{lemma}[theorem]{\bf Lemma}
 \newtheorem{proposition}[theorem]{\bf Proposition}
 \newtheorem{definition}[theorem]{\bf Definition}
 \newtheorem{corollary}[theorem]{\bf Corollary}
 \newtheorem{remark}[theorem]{Remark}
 \renewenvironment{proof}{{\em \noindent Proof.}}
 {\hfill $\square$\newline}
 \newcommand{\R}{\mathbb{R}}
 \newcommand{\Q}{\mathbb{Q}}
 \newcommand{\N}{\mathbb{N}}
 \newcommand{\SSS}{\mathbb{S}}
 \newcommand{\EE}{\mathcal{E}}
 \newcommand{\FF}{\mathcal{F}}
 \newcommand{\CC}{\mathcal{C}}
   \newcommand{\RR}{\mathcal{R}}
    \newcommand{\LL}{\mathcal{L}}
\newcommand{\UU}{\mathcal{U}}
\newcommand{\VV}{\mathcal{V}}
\newcommand{\WW}{\mathcal{W}}
 \newcommand{\AAA}{\mathcal{A}}
\newcommand {\abs}[1]{\mid\! #1\!\mid}
\newcommand{\wht}[1]{{{\widehat{#1}}}}
\newcommand{\wt}[1]{{{\widetilde{#1}}}}
\newcommand {\absgamma}{{\abs{\gamma}}}
\newcommand{\bg}{{\bf g}}
\newcommand{\bh}{{\bf h}}
\newcommand{\bv}{{\bf v}}
\newcommand{\oth}{{\overline{\theta}}}
\newcommand{\clos}{{\rm clos}}
\newcommand{\crit}{{\rm crit}}
\newcommand{\rd }{{\rm d}}
\newcommand{\eps}{\varepsilon}
\newcommand{\g}{\gamma}
 \newcommand{\IN}{{\rm in}}
\newcommand{\nbh}{{\nabla_\bh}}
\newcommand{\oo}{{\bf 0}}
\newcommand{\sing}{{\rm sing}}
\newcommand{\vp}{\varphi}
\newcommand{\ww}{{\bf w}}
\newcommand{\xx}{{\bf x}}
\newcommand{\yy}{{\bf y}}
\title[On restricted analytic gradients on analytic I.S.S.]{On restricted Analytic Gradients
on Analytic Isolated Surface Singularities}
\author{Vincent Grandjean}
\address{{\it Permanent Address:} V. Grandjean, Department of Computer Science,
University of Bath, BATH BA2 7AY, England,(United Kingdom)} 
\email{cssvg@bath.ac.uk\vspace{-6pt}}
\address{{\it Temporary Address:} V. Grandjean, Fields Institute, 222 College Street,
Toronto, Ontario,  M5T 3J1, Canada}
\email{vgrandje@fields.utoronto.ca}
\author{Fernando Sanz}
\address{{\it Permanent Address:} F. Sanz, University of Valladolid,
Departamento de \'{A}lgebra, Geometr\'{\i}a y Topolog\'{\i}a, Facultad de
Ciencias, Prado de la Magdalena s/n, E-47006, Valladolid (Spain).
Fax: (34) 983 423788} \email{fsanz@agt.uva.es}
\subjclass[2010]{Primary: 34C08, Secondary: 34C07, 37B35, 37C10,
34D05, 14P15, 14B05}
\begin{document}
\maketitle
\begin{abstract}
Let $(X,\oo)$ be a real analytic isolated surface singularity at 
the origin $\oo$ of a real analytic manifold $(\R^n,\oo)$ equipped 
with a real analytic metric $\bg$. Given a real analytic function 
$f_0:(\R^n,\oo) \to (\R,0)$ singular at $\oo$, we prove that the
gradient trajectories for the metric $\bg|_{X\setminus \oo}$ of the
restriction $(f_0|_X)$ escaping from or ending up at the origin
$\oo$ do not oscillate. Such a trajectory is thus a sub-pfaffian 
set. Moreover, in each connected component of $X\setminus \oo$ where 
the restricted gradient does not vanish, there is always a trajectory 
accumulating at $\oo$ and admitting a formal asymptotic expansion 
at $\oo$.
\end{abstract}

\section{Introduction}

Let $f_0:(\R^n,\oo)\to\R$ be a real analytic function such that
 $\oo$ is a critical point of $f_0$.
Let $\bg$ be a real analytic Riemannian metric defined in a
neighborhood of $\oo$. Let $\gamma:[0,+\infty[\to\R^n$ be a maximal
solution of the gradient vector field
$\nabla_\bg f_0$ such that $\omega(\gamma):=\lim_{t\to\infty}\gamma(t)=\oo$,
and let $\absgamma\subset\R^n$ be its image. We are
not interested in any particular parameterization and we will simply
call $\gamma$ and $\absgamma$ {\em a gradient trajectory}.
Gradient trajectories $\gamma:]-\infty,0]\to \R^n$ escaping from
$\oo=\lim_{t\to-\infty}\gamma(t)$
will be dealt with in same way in changing the sign of $f_0$.

The classical problem of the gradient is to know how, from an analytic point
of view, does the solution $\absgamma$ go to its limit point $\oo$.
For a long time remained undecided  Thom's question famously known as
{\it Thom's Gradient Conjecture}: does the trajectory have a {\it tangent}
at its limit point, namely does $\lim_{t\to\infty}\frac{\gamma(t)}{|\gamma(t)|}$ exist ?
(see \cite{Mou} for an historical account by then).
Eventually Kurdyka, Mostowski and Parusi\'nski showed that the length of the radial projection
of the curve $\absgamma$ onto $\SSS^{n-1}$ is finite \cite{KuMoPa}, thus proving Thom's Conjecture.

\medskip
A much more challenging question about the behavior of gradient trajectories at
their limit point is to decide whether they oscillate or not. A trajectory $\gamma$ is
(\textit{analytically}) {\it non-oscillating} if given any
(semi-)analytic subset $H \subset \R^n$ the intersection $\absgamma
\cap H$ has finitely many connected components.

The plane case is well understood. In dimension $n \geq 3$, but a few special 
cases in dimension $3$ \cite{Sa,FoSa,Go}, the non-oscillation of gradient 
trajectories is not known.

It is also worth recalling that in the case of real analytic vector fields on 
a $3$-manifold, some very interesting properties of the Hardy field of the real analytic 
function germs along a given non-oscillating trajectory have been studied in \cite{Can-M-R},
and thus allowing a partial reduction of the singularities result.

\smallskip
In the special case where a real analytic isolated surface singularity is foliated by
gradient trajectories, the main result of this paper guarantees,
that they do not oscillate. In fact, we will solve
the following slightly more complicated problem.

\medskip
Let $X\subset \R^n$ be a real analytic isolated surface singularity at the
origin $\oo$. Each connected component $S_0$ of the germ at $\oo$ of
$X\setminus\{\oo\}$ is a real analytic submanifold of $\R^n$. The ambient
metric $\bg$ induces on $S_0$ an analytic Riemannian metric $\bh :=\bg|_{S_0}$.
The gradient vector field $\nbh (f_0|_{S_0})$ of the restriction $f_0|_{S_0}$
of the function $f_0$ to $S_0$ is thus well defined. The vector field $\nbh (f_0|_{S_0})$
is called {\em the restricted gradient vector field} of $f_0$ on $S_0$ and will be shortened as
$\nbh (f_0)$.

The main result of this paper is the following:
\begin{theorem}\label{thm:MainResult}
Let $\gamma:\R_{\geq 0}\to S_0$ be a trajectory of the restricted
gradient vector field $\nbh f_0$ accumulating at $\oo$. Then
$\gamma$ is analytically non-oscillating.
\end{theorem}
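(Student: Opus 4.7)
The plan is to descend the problem, by resolution of singularities, to the local study of an integral curve of an analytic vector field near an elementary singularity on a smooth real analytic surface. First I would invoke resolution of singularities for real analytic surfaces to obtain a proper analytic morphism $\pi:\wt X\to X$, with $\wt X$ smooth of dimension $2$, that is an isomorphism above $X\setminus\{\oo\}$ and such that $E:=\pi^{-1}(\oo)$ is a normal crossings divisor. The component $S_0$ then corresponds to an open subset $\wt S_0\subset\wt X\setminus E$, and $\gamma$ lifts to an analytic curve $\wt\gamma\subset\wt S_0$. The pullback $\pi^*(\nbh f_0)$ is meromorphic on $\wt X$ with poles concentrated along $E$; multiplying by a suitable monomial in local equations of the components of $E$ clears these poles and produces an analytic vector field $\wt\xi$ on a neighbourhood of $E$ in $\wt X$ whose oriented orbits inside $\wt S_0$ coincide with those of $\pi^*(\nbh f_0)$, up to positive time reparameterization. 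Thus $\wt\gamma$ is an integral curve of $\wt\xi$ whose $\omega$-limit is contained in $E$.

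Next I would apply the reduction of singularities for analytic vector fields on smooth real analytic surfaces (Seidenberg--Dumortier): after finitely many further point blow-ups centred on $E$, every singular point of $\wt\xi$ on the enlarged divisor $E'$ is elementary, with at least one non-zero eigenvalue. Throughout this process $\pi^*f_0$ remains a strict Lyapunov function for $\wt\xi$ along $\wt\gamma$, which rules out closed orbits as well as elliptic or focal elementary singularities inside $\wt S_0$. Combining monotonicity of $\pi^*f_0$ with the Poincar\'e--Bendixson theorem on the compact surface-with-corners $\clos(\wt S_0)$ and with a \L ojasiewicz-type inequality near $E'$, one shows that the $\omega$-limit of $\wt\gamma$ reduces to a single elementary fixed point $p\in E'$.

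The local analysis at an elementary analytic singularity of a vector field on a real analytic surface is classical: any trajectory converging to $p$ either coincides with one of the finitely many analytic separatrices, lies in a hyperbolic sector tangent to such a separatrix, or --- in the saddle-node case --- is tangent to a formal invariant curve along which it admits a formal asymptotic expansion. In each situation $\wt\gamma$, viewed as a set germ at $p$, is sub-pfaffian, so it meets every semi-analytic subset of a neighbourhood of $p$ in finitely many connected components. Since $\pi$ is proper and analytic, the preimage $\pi^{-1}(H)$ of a semi-analytic set $H\subset\R^n$ is semi-analytic in $\wt X$, and $\pi|_{\wt S_0}$ is an analytic isomorphism onto $S_0$; the finiteness of $\wt\gamma\cap\pi^{-1}(H)$ thus transfers to finiteness of $\absgamma\cap H$, giving non-oscillation.

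The step I expect to be the main obstacle is the second one, i.e.\ confining the $\omega$-limit of $\wt\gamma$ to a single point of $E'$. Reduction of singularities yields elementary points, but $E'$ is generally a union of invariant curves of $\wt\xi$ carrying several such points; a priori $\wt\gamma$ could accumulate at a whole arc or cycle of $E'$ rather than at a single fixed point. Excluding this behaviour relies in an essential way on the gradient structure of the original field --- the strict Lyapunov function, the absence of closed orbits, and the absence of centre or focal reduced singularities --- combined with a careful combinatorial analysis of how $\wt S_0$ abuts $E'$ after resolution. This is precisely the point at which the two-dimensionality of $X$ and the restricted-gradient hypothesis become indispensable, and where the analogous strategy breaks down in higher dimension.
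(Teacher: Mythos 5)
Your architecture (resolve $X$ directly, reduce the lifted vector field to elementary singularities, analyze locally at the exceptional divisor) is genuinely different from the paper's (opening blow-up to a cylinder, polar-like parameterization, explicit asymptotic expansion of the metric and the function), and you have correctly located the crux: showing that the lifted trajectory $\wt\gamma$ has a single $\omega$-limit point on $E'$, rather than accumulating on an arc or cycle. But the tools you offer for that step do not suffice. Poincar\'e--Bendixson on the surface-with-boundary $\clos(\wt S_0)$ leaves open the possibility that $\omega(\wt\gamma)$ is a graphic --- a polycycle of elementary singularities connected by separatrices --- entirely contained in $E'$. The Lyapunov function $\pi^*f_0$ vanishes identically on $E'$ (since $f_0(\oo)=0$ after normalization), so strict monotonicity of $\pi^*f_0$ along $\wt\gamma$ is perfectly compatible with $\omega(\wt\gamma)$ being such a polycycle. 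The \L ojasiewicz argument yields finite $\pi^*\bh$-length of $\wt\gamma$, but $\pi^*\bh$ is degenerate along $E'$, so a trajectory spiraling infinitely often around $E'$ can still have finite $\pi^*\bh$-length; this does not rule out spiraling either. In fact, the paper runs into exactly this monodromic sub-case in the proof of Theorem~\ref{cor:formal-separatrix} (case (a): all abutting quadrants of saddle type, a Poincar\'e return map without fixed points) and disposes of it by \emph{citing} Theorem~\ref{thm:MainResult}; trying to fill in your sketch with these tools would therefore become circular.

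What actually excludes the monodromic situation is not a soft dynamical argument but the asymptotic-expansion computation of Sections~\ref{section:expansion-function}--\ref{section:ProofMainResult}. After pulling back, one writes $f^\Phi=P(r)+r^\alpha F(\vp,r)$ with $F|_\CC$ non-constant, computes the explicit principal parts of the transformed metric in Lemma~\ref{lm:expression_transf_metric}, and verifies case by case in the gradient system (\ref{eq:expression_xi}) that either the flow is transverse to the circle on an arc (the dicritical form (\ref{eq:dic})) or $\dot\vp$ is dominated by the $\vp$-derivative of a continuous function that is non-constant on $\CC$ (the non-monodromic form (\ref{eq:nonmon})); by Rolle that derivative must vanish and change sign, which is precisely the obstruction to a monodromic return map. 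This Rolle-type structure of $\dot\vp$ is where the gradient hypothesis re-enters after blow-up, and it is not visible from the Lyapunov property, Poincar\'e--Bendixson, or the \L ojasiewicz inequality alone; it is the missing ingredient in your proposal.
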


A pleasant and cheap consequence of this result is
\begin{corollary} \label{cor:Pfaffian}
The curve $|\gamma|$ is a {\em sub-pfaffian set}.
\end{corollary}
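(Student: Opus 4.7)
The plan is to combine the non-oscillation supplied by Theorem \ref{thm:MainResult} with a resolution of the surface singularity $(X,\oo)$ and a Rolle-leaf argument in the sense of Khovanskii, so as to place $|\gamma|$ in the sub-pfaffian class.

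First I would apply Hironaka's resolution to obtain a proper real analytic map $\pi:\widetilde{X}\to X$ with $\widetilde{X}$ a smooth real analytic surface, $E:=\pi^{-1}(\oo)$ a divisor with normal crossings, and $\pi$ a real analytic diffeomorphism above $X\setminus\{\oo\}$. The lifted curve $\widetilde{\gamma}:=\pi^{-1}(|\gamma|\setminus\{\oo\})$ is an orbit of the pulled-back vector field, which after multiplication by a suitable positive analytic function clearing denominators along $E$ becomes an analytic vector field $\widetilde{V}$ on $\widetilde{X}$. Because $\pi$ is real analytic and proper, it pulls semi-analytic subsets of $X$ back to semi-analytic subsets of $\widetilde{X}$, so Theorem \ref{thm:MainResult} transfers to $\widetilde{\gamma}$: its intersection with any semi-analytic subset of $\widetilde{X}$ has finitely many connected components. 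In particular, the $\omega$-limit set of $\widetilde{\gamma}$, being connected and meeting every irreducible component of $E$ and every smooth analytic curve transverse to $E$ in only finitely many points, must reduce to a single point $p\in E$.

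Next, I would choose a local analytic chart $(u,v)$ centered at $p$. Non-oscillation of $\widetilde{\gamma}$ against the level sets $\{u=c\}$ and $\{v=c\}$ forces the germ of $\widetilde{\gamma}$ at $p$, up to a swap of the coordinates, to be the graph of an analytic function $v=\phi(u)$ defined on some half-interval $(0,\eps)$. This $\phi$ is the unique such solution with $\phi(0^+)=0$ of a first order analytic ODE
\[
\phi'(u)=F(u,\phi(u))
\]
encoding the direction field of $\widetilde{V}$. Its graph is then a Rolle leaf of the analytic $1$-form $\eta:=dv-F(u,v)\,du$: any analytic arc meeting the graph in two distinct points would, by a classical Rolle-type argument, produce a tangency with $\eta$, and the existence of such an additional intersection with a semi-analytic curve is excluded by non-oscillation. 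By Khovanskii's theory, a Rolle leaf of an analytic $1$-form is a semi-pfaffian set.

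Finally, $|\gamma|$ is the image under the proper real analytic map $\pi$ of the semi-pfaffian set $\widetilde{\gamma}\cup\{p\}$, and is therefore sub-pfaffian by the very definition of the latter class. The step on which the argument genuinely rests is the passage from the abstract semi-analytic non-oscillation to the explicit graph-and-Rolle representation of $\widetilde{\gamma}$ in the chart at $p$: one has to rule out that $\widetilde{\gamma}$ spirals around $p$ or accumulates along a whole branch of $E$, and it is precisely here that the non-oscillation conclusion of Theorem \ref{thm:MainResult} is consumed.
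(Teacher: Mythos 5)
Your overall architecture matches the paper's proof of Corollary~\ref{cor:Pfaffian}: resolve the surface singularity, lift $|\gamma|$, use Theorem~\ref{thm:MainResult} to confine the lifted trajectory to a single accumulation point $p$ on the exceptional divisor, establish that the lifted trajectory is a Rolle leaf (hence pfaffian), and then take its image under the proper resolution map to obtain a sub-pfaffian set. The paper works through the opening blow-up $\beta$ and a resolution $\sigma$ of $S=\beta^{-1}(S_0)$ rather than a direct Hironaka resolution of $X$, but this is not an essential difference.

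The genuine gap is in your justification of the Rolle-leaf property. You write that the graph $v=\phi(u)$ is a Rolle leaf of $\eta = dv - F(u,v)\,du$ because ``any analytic arc meeting the graph in two distinct points would, by a classical Rolle-type argument, produce a tangency with $\eta$, and the existence of such an additional intersection with a semi-analytic curve is excluded by non-oscillation.'' This conflates two different things. The Rolle-leaf property is a geometric statement about tangencies of arbitrary $C^1$ arcs with the $1$-form, and it does \emph{not} follow from non-oscillation (which only controls intersections of the trajectory with semi-analytic sets); moreover, the naive application of Rolle's theorem to the function $s\mapsto v(\sigma(s))-\phi(u(\sigma(s)))$ gives a critical point that need not lie on the graph, so one cannot read off a tangency with $\eta$ from it. The correct tool, cited in the paper, is Haefliger's theorem: in a simply connected open set on which the analytic foliation is non-singular, every leaf is a Rolle leaf. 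The role of non-oscillation and the resulting single-point accumulation at $p$ is precisely to confine the lifted trajectory to such a simply connected, semi-analytic open subset of the strict transform, after which Haefliger applies directly; it is not used to prove the Rolle property itself. A secondary imprecision is your passage from non-oscillation to the graph representation $v=\phi(u)$: you should argue explicitly that the $u$-coordinate (or $v$-coordinate) along the trajectory is eventually strictly monotone, by intersecting the trajectory with the semi-analytic tangency locus $\{\langle du, \widetilde{V}\rangle = 0\}$. With Haefliger replacing your Rolle/non-oscillation argument, the proof is correct and essentially identical to the paper's.
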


A natural question is to ask whether there exists a trajectory
$\gamma$ of the restricted gradient accumulating to the origin to
apply the main theorem to. 
Elementary topological arguments and some properties of a gradient vector field show 
that it is always the case:  
\begin{proposition}\label{pro:trajectory-to-origin}
There exists a non-stationary trajectory of $\nabla_\bh f_0$
accumulating to $\oo$ either in positive or in negative time.
\end{proposition}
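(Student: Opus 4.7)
The plan is a Poincar\'e--Bendixson-type analysis of the restricted gradient flow in a small neighborhood of $\oo$, combining the monotonicity of $f_0$ along trajectories, the \L ojasiewicz gradient theorem on the analytic manifold $S_0$, and the local finiteness of critical values of $f_0|_{S_0}$.

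\emph{Setup.} For $\epsilon>0$ small, with $S_\epsilon$ transverse to $X$, the local conic structure of $X$ at $\oo$ together with the connectedness of $S_0$ imply that $L_\epsilon:=S_0\cap S_\epsilon$ is a single smooth analytic circle, and $W_\epsilon:=S_0\cap\overline{B_\epsilon}$ is homeomorphic to the cone over $L_\epsilon$ with apex $\oo$. The extension $f_0(\oo)=0$ is continuous. Since $\nabla_\bh f_0\not\equiv 0$ on $S_0$ (this is the implicit hypothesis of the statement), $f_0|_{S_0}$ is non-constant near $\oo$; replacing $f_0$ by $-f_0$ if necessary (which only reverses the orientation of trajectories), we may assume there is a sequence of points $q_n\to\oo$ in $S_0$ with $f_0(q_n)>0$.

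\emph{Main case.} Assume first that $m:=\min_{L_\epsilon}f_0>0$ for some small $\epsilon$. Pick $q\in S_0\cap B_\epsilon$ with $0<f_0(q)<m$. Along the backward trajectory $\gamma_q^-$, the function $f_0$ is strictly decreasing, so $\gamma_q^-$ cannot reach $L_\epsilon$ and is trapped in $W_\epsilon$; its $\alpha$-limit is a non-empty compact subset of $W_\epsilon\cup\{\oo\}$. If $\oo\in\alpha(\gamma_q^-)$ we are done; otherwise the \L ojasiewicz gradient theorem applied on the analytic manifold $S_0$ forces $\alpha(\gamma_q^-)=\{p^*\}$ for a critical point $p^*\in S_0\cap W_\epsilon$ with $f_0(p^*)<f_0(q)$. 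I would then iterate, choosing a new starting point closer to $\oo$ with $f_0$-value below $f_0(p^*)$, obtaining a strictly decreasing sequence of positive critical values of $f_0|_{S_0}$ in $W_\epsilon$. The critical set of $f_0|_{S_0}$ in $W_\epsilon$ is a real analytic subset with finitely many connected components, on each of which $f_0$ is constant, so only finitely many critical values occur; the iteration must therefore terminate, and the only possible mode of termination is that some backward trajectory has $\oo$ in its $\alpha$-limit.

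\emph{The general case and main obstacle.} When $f_0$ takes both signs on every $L_\epsilon$, the semi-analytic set $\{f_0=0\}\cap S_0$ accumulates at $\oo$, and curve selection gives an analytic arc in it ending at $\oo$. I would then run the argument of the main case \emph{inside} a connected component $U$ of $\{f_0>0\}\cap S_0\cap B_\epsilon$ with $\oo\in\overline{U}$: backward trajectories in $U$ cannot cross regular points of $\partial U\subset\{f_0=0\}$ (the gradient is transverse there), so exits reduce to $L_\epsilon$, to $\oo$, or to critical points of $f_0|_{S_0}$ lying on $\{f_0=0\}$. The hard part is precisely to preclude all backward trajectories from $U$ from terminating at these last (finitely many) obstructing critical points; I expect to handle this by choosing the starting point $q\in U$ outside their lower-dimensional semi-analytic unstable sets, so that its backward trajectory avoids them and must instead accumulate at $\oo$, producing the required non-stationary trajectory.
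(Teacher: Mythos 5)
Your ``main case'' is essentially sound once the dichotomy is made cleanly: the right split is on whether $f_0^{-1}(0)\cap S_0$ is empty near $\oo$ or not (rather than on the sign of $f_0$ on the boundary circle $L_\eps$, which can hide interior zeros). When $f_0\neq 0$ on $S_0$, your backward trajectory from $q$ with small $f_0(q)>0$ is indeed trapped and the iteration over the finitely many critical values works. The ``general case'' however contains a genuine error. You assert that ``backward trajectories in $U$ cannot cross regular points of $\partial U\subset\{f_0=0\}$ (the gradient is transverse there)''. This reverses what transversality gives you: at a regular point of $\{f_0=0\}$ the gradient is transverse to the level set and points \emph{into} $\{f_0>0\}$, so the backward flow crosses \emph{out} of $U$ at exactly such points. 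A regular level set of a function is never a barrier for its own gradient flow. Consequently backward orbits can exit $U$ into $\{f_0<0\}$, and---since in this case $L_\eps$ also meets $\{f_0\leq 0\}$---they can then leave $W_\eps$ altogether through $L_\eps$: the trapping that drove the main case is gone. The proposed rescue, choosing $q$ outside ``lower-dimensional semi-analytic unstable sets'' of the obstructing critical points, is also unjustified: those critical points are degenerate, may lie on a one-dimensional component of the critical locus, and the set of $q$ whose backward orbit is captured by $\partial U$ can perfectly well be open. No genericity/dimension-count argument is available here.

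The proof given in Section~\ref{section:consequences} resolves both problems with a single move you are missing: replace $f_0$ by $-f_0^2$. The new function is $\leq 0$, has the same unparameterized gradient orbits away from its zero set, and its zero set $Z_0$ now \emph{coincides} with its critical set, so $Z_0$ genuinely blocks the flow and (forward) orbits in a component $U$ of $S_0\setminus Z_0$ are trapped. The remaining step---producing one orbit whose $\omega$-limit is $\oo$ rather than a boundary half-branch of $Z_0$---is then handled not by genericity but by a monotonicity argument of Poincar\'e--Bendixson type on the triangle $\Sigma\cong\clos(U)$: one proves that the fibers $g_0^{-1}(t)$ are connected segments for small $|t|$, defines the first-hitting map $\tau$ from the two boundary half-branches $\sigma_1,\sigma_2$ to a transversal segment $I\subset g_0^{-1}(t_0)$, and shows $\tau$ is monotone with a nonempty gap $[a,b]\subset I$ between the images coming from the two sides; orbits issued from this gap cannot accumulate on either half-branch, hence must accumulate at the vertex $\bv=\kappa^{-1}(\oo)$. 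Some device of this strength is needed to close your general case; a dimension count will not do.
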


It is worth recalling that in the smooth context of an analytic
gradient vector field on $(\R^n,\oo)$, there exists furthermore a
real analytic curve through $\oo$ invariant for the gradient vector
field \cite{Mou}, {\em a real analytic separatrix}). For restricted
gradients over isolated surface singularities we also prove here
there always exists a {\em formal separatrix}:

\begin{theorem} \label{cor:formal-separatrix}
Let $S_0$ be a connected component of $X\setminus\{\oo\}$. If
$\nabla_\bh f_0$ does not vanish $S_0$, there exists a trajectory
$\gamma:\R_{\geq 0}\to S_0$ of $\nabla_\bh f_0$ accumulating to
$\oo$ which admits a formal asymptotic expansion at the origin such
that the associated formal curve $\widehat{\Gamma}$ is invariant for
the restricted gradient vector field.
\end{theorem}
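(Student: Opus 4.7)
The plan is to combine Proposition \ref{pro:trajectory-to-origin} with Theorem \ref{thm:MainResult} via a desingularization of $X$ and a reduction of singularities of the lifted vector field, so as to reduce to a simple singularity of an analytic vector field on a smooth surface, where the existence of a formal invariant curve is classical. More precisely, by Proposition \ref{pro:trajectory-to-origin}, after possibly changing $f_0$ to $-f_0$, we obtain a non-stationary trajectory $\gamma:\R_{\geq 0}\to S_0$ of $\nbh f_0$ with $\omega(\gamma)=\oo$, and by Theorem \ref{thm:MainResult} this $\gamma$ is analytically non-oscillating.

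Let $\pi:\widetilde X\to X$ be a resolution of the surface germ $(X,\oo)$, with $\widetilde X$ smooth and $E=\pi^{-1}(\oo)$ a normal crossing divisor, and lift $\gamma$ to a trajectory $\widetilde\gamma\subset \widetilde X\setminus E$ of the vector field $\widetilde V$ obtained by pulling back $\nbh f_0$ (clearing denominators along $E$ so that $\widetilde V$ is analytic on $\widetilde X$). The first key point is that non-oscillation of $\gamma$ forces $\omega(\widetilde\gamma)$ to be a single point $p_1\in E$: were $\omega(\widetilde\gamma)$ to contain two distinct points, one could choose in a chart of $\widetilde X$ a real analytic arc $C$ separating them, and the image $\pi(C)\subset \R^n$ would be semianalytic, while $|\gamma|\cap \pi(C)$ would have infinitely many connected components, contradicting Theorem \ref{thm:MainResult}. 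Next we apply Seidenberg's reduction of singularities for planar analytic vector fields to $\widetilde V$ at $p_1$: a finite sequence $\sigma:Y\to \widetilde X$ of point blowings-up transforms $\widetilde V$ into a vector field with only simple (reduced) singularities along the total exceptional divisor of $\pi\circ\sigma$. Lifting $\widetilde\gamma$ step by step through $\sigma$ and iterating the non-oscillation argument at each intermediate surface, the successive lifts of $\gamma$ accumulate at a single point $q\in Y$, which must be a zero of $\sigma^{\ast}\widetilde V$, hence a simple singularity. At such a simple singular point on a smooth analytic surface, the classical theory (Briot--Bouquet, Hukuhara--Kimura--Matuda, Dulac, Camacho--Sad) produces a formal invariant curve $\widehat\Gamma_q$ through $q$, and every trajectory accumulating at $q$ admits a formal asymptotic expansion whose trace is one such invariant formal branch. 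Pushing $\widehat\Gamma_q$ forward by $\pi\circ\sigma$ yields a formal branch $\widehat\Gamma\subset (X,\oo)$ invariant for $\nbh f_0$, which furnishes the required formal asymptotic expansion of $\gamma$.

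The main obstacle is the first key point, namely propagating non-oscillation through $\pi$ and through each further blowing-up so that the $\omega$-limits of the successive lifts remain single points. One must verify that real analytic subsets of the blown-up surfaces push down to semianalytic subsets of $\R^n$ that distinguish candidate limit points, and that the relevant image-properness statements apply in the setting of the paper. A secondary but nontrivial point is the final assertion that a trajectory accumulating at a simple singularity is formally tangent to some separatrix: for saddles and nodes this is standard, while for saddle--nodes one must check that the accumulating trajectory coincides formally with the unique strong separatrix, via a center-manifold or Dulac-style normal form argument.
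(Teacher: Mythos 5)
Your strategy is genuinely different from the paper's: you fix a specific trajectory $\gamma$ (from Proposition \ref{pro:trajectory-to-origin}), lift it through a resolution plus a Seidenberg reduction, use non-oscillation to pin down a single limit point $q$, and then invoke the classical formal separatrix theory at $q$. The paper instead argues structurally about the foliation near the divisor: it proves by contradiction that there must be either a dicritical point or a non-corner singularity on the strict divisor $E'$, and then manufactures the required trajectory \emph{at that specific point}; it never asserts that the trajectory from Proposition \ref{pro:trajectory-to-origin} admits a formal expansion.

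This difference matters, because your argument has a genuine gap precisely where the two strategies diverge: the limit point $q$ may very well be a \emph{corner} of the exceptional divisor (a point of $\sing(\wt E)$), and at a corner singularity the invariant formal branches produced by the classical theory are the two divisor components themselves. These branches are collapsed to $\oo$ by the blow-down $\pi\circ\sigma$, so pushing $\widehat\Gamma_q$ forward does not yield a nontrivial formal curve in $(\R^n,\oo)$. Worse, a trajectory in a quadrant accumulating to a corner node with irrational eigenvalue ratio does not even admit a $\Q$-generalized Puiseux expansion in the height variable $z$ (the exponents become irrational), so the specific $\gamma$ you start from may simply fail to satisfy the conclusion of the theorem. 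The statement is existential, and the paper exploits this: it proves (via a delicate dynamical contradiction involving saddle/node-source/node-sink quadrants at the corners, Poincar\'e--Bendixson arguments, and the monotonicity of $f_0$ along gradient orbits) that the divisor configuration must contain either a dicritical point or a non-corner singularity, and only then produces the separatrix there. You flag the propagation of non-oscillation and the saddle-node case as the obstacles, but neither is the real issue; the corner case is what breaks the argument. To repair it you would need to show that \emph{some} trajectory avoids accumulating at a corner, which is essentially the content of the paper's contradiction argument, so you cannot avoid that structural analysis.
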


\section{Structure of the proof}\label{Section:plan-article}

We first recall the case of an analytic Euclidean gradient in $\R^2$.

Trajectories of a real analytic vector field in $\R^2$ accumulating
at the origin either "spiral" around the origin or have a tangent.
In the latter case, a Rolle's type argument
shows that the trajectory is non-oscillating (see \cite{Can-M-S}).
The non-oscillation of a planar analytic gradient trajectory is
thus given by the existence of a tangent. Although Thom's Gradient conjecture
holds true (\cite{KuMoPa}), we sketch the usual simpler proof
of the existence of a tangent in the plane case. This will provide a
flavor of some of the arguments that makes our proof of Theorem
\ref{thm:MainResult} works.

\smallskip
\noindent Let $(r,\vp)$ be the polar coordinates at the origin of
$\R^2$ and write
$$f_0(r\cos \vp, r\sin \vp) = r^k [F_k (\vp) +
O(r)] $$
where $F_k(\vp)$ is the restriction to the unit circle of the
homogeneous part of $f_0$ of least degree. The gradient differential
equation becomes a differential equation on $\SSS^1 \times \R_{\geq
0}$ and, after division by $r^{k-1}$, writes as
\begin{equation}\label{Equation:Gradient-polar}
\dot{r} = r(kF_k +O(r)) \mbox{ and } \dot{\vp} = F_k^\prime + O(r).
\end{equation}
Since $F_k$ is not identically zero, when it is constant we divide
Equation (\ref{Equation:Gradient-polar}) by $r$ and find that
the divided vector field is transverse to $\CC=\SSS^1 \times 0$ at
each point (\textit{dicritical case}). If $F_k$ is not constant, then
$F_k^\prime$ must vanish and change sign along the circle $\CC$.
This prevents any gradient trajectory from accumulating on the whole
bottom circle $\CC$ (\textit{non-monodromic case}).
\\
In both cases, dicritical and non-monodromic, a plane gradient
trajectory does not spiral around its limit point, therefore it has
a tangent and thus does not oscillate.

\medskip
The plane case is enlightening enough to provide us with some of the
elements we need to prove Theorem \ref{thm:MainResult}. The surface
$S_0$ on which we want to understand the behavior of the restricted
gradient trajectories at their limit point $\oo$, is analytically
diffeomorphic to a cylinder $\SSS^1 \times ]0,\eps]$. We can carry
the metric $\bh$ over this cylinder, so that we have a well defined
gradient differential equation. Our concern then becomes: how does a
trajectory of this differential equation behave near the bottom
circle $\CC=\SSS^1 \times 0$? There is no canonical way to extend
the inverse of the diffeomorphism onto $\SSS^1 \times [0,\eps]$, and
so \`a-priori, our differential equation is not well defined on the
bottom circle $\CC$, if defined at any point of it!

Nevertheless, we manage to prove that a limit dynamics exists on the
circle $\CC$ but at finitely many points. We first show that our setting only allows a
single possible type of oscillation, that we call spiraling.
To keep up with the planar situation, we actually prove that
the only possible dynamics of the restricted gradient vector field will either
be dicritical-like or non-monodromic-like (see Section \ref{section:dynamic-on-cylinder}
for precise definitions). Consequently, trajectories cannot spiral and will
therefore be non-oscillating.

\vspace{6pt} \noindent
In Section \ref{section:ParameterizationSurfaces},
Proposition \ref{pro:parameterization-of-surfaces} provides a systematic
way to parameterize $\clos(S_0)$ as the surjective image of
a continuous mapping defined on $\SSS^1\times[0,\eps]$ which induces an analytic
diffeomorphism between the open cylinder $\SSS^1\times]0,\eps]$ and $S_0$. Such a
parameterization is inherited from the resolution of singularities of the analytic
surface $X$ and thus comes with some very specific properties on the bottom circle $\CC
=\SSS^1\times 0$.
\\
In Section \ref{section:expansion-function}, we use such a
parameterization to express the pull-back of the
restriction of the function $f_0$ to $S_0$, as well as the
corresponding gradient vector field, in polar-like coordinates
$(\vp,r)\in \SSS^1 \times [0,\epsilon]$ as in Equation
(\ref{Equation:Gradient-polar}). We obtain a continuous
principal part along the bottom circle that will play a similar role
to that of the principal part $F_k$ in
(\ref{Equation:Gradient-polar}).
\\
Section \ref{section:dynamic-on-cylinder} deals with the oscillating
dynamics of a given real analytic vector field on an
isolated surface singularity (such as $S_0$) and vanishing at the
tip, which can only be spiraling around this singular point, as we
have already suggested. Although of an independent nature, we use
the results of the previous sections for the proof. We also
describe two local dynamical situations we call ``dicritical'' and
``non-monodromic'', generalizing the planar smooth case,
and show here that such dynamics are non-oscillating.
\\
Our notion of ``dicritical-ness": {\it there
exists an arc of the bottom circle $\CC$ such that each point is the
$\omega$-limit point of a unique trajectory}, is
weaker than the usual notion requiring transversality to the
exceptional divisor (here the bottom circle). Our notion
of ``non-monodromic-ness'' is also weaker than the notion stated above: the function
playing the role of $F_k$ in Equation (\ref{Equation:Gradient-polar}), is
continuous, not constant but can fail to be differentiable at finitely many
points of $\CC$.
\\
The proof of Theorem \ref{thm:MainResult} is done in Section
\ref{section:ProofMainResult}. It uses all the main results of
Sections \ref{section:ParameterizationSurfaces},
\ref{section:expansion-function}
to obtain a differential equation on a cylinder $\SSS^1 \times
[0,\eps]$ which is analytic on $\SSS^1 \times ]0,\eps]$. Although there
is a slight cost, namely a finite subset of the bottom circle where the
differential equation is likely to be not defined, we know enough
about it to show that only the dicritical or non-monodromic situations happen.
Section \ref{section:dynamic-on-cylinder} then guarantees the non-oscillation of
the restricted gradient trajectories.
\\
The last section deals with two not-so-unexpected consequences of
our main result, Corollary~\ref{cor:Pfaffian} and
Theorem~\ref{cor:formal-separatrix}.
\section{Parameterization of real analytic
surfaces}\label{section:ParameterizationSurfaces}
Let $X$ be the germ, at the origin $\oo$ of $\R^n$, of a real analytic surface of pure dimension $2$.
We will not distinguish between the germ of $X$ at
$\oo$ and a representative in a sufficiently small neighborhood of $\oo$.\\
Assume that the surface $X$ has an isolated singularity at the origin,
that is $X\setminus\{\oo\}$ is a smooth embedded analytic surface of $\R^n$.

\medskip
Let $S_0$ be a given connected component of the germ at $\oo$ of the
regular part $X\setminus\{\oo\}$. The \textit{tangent cone of $S_0$
at $\oo\in\R^n$} is the subset of $\SSS^{n-1}$ made of the limits of
the oriented secant direction $\frac{p_k}{|p_k|}$
taken along sequences of points $(p_k)_k$ in $S_0$ converging to
$\oo$. The tangent cone $C_\oo(S_0)$ is a compact connected
subanalytic subset of $\SSS^{n-1}$ of dimension at most one. We
distinguish two cases:\\
- If $C_\oo(S_0)$ reduces to a single point, we will speak about the
\textit{cuspidal tangent cone} case (CTC for short).
\\
- If $C_\oo(S_0)$ is a curve we will speak of the \textit{open
tangent cone} case (OTC).

\medskip
For any $\varepsilon>0$ sufficiently small, the Local Conic
Structure Theorem (see \cite{Mil,Boc-C-R,vdD}) states that $X$ is
homeomorphic to the cone with vertex $\oo$ over
$X_\varepsilon=X\cap\SSS^{n-1}_\varepsilon$, where
$\SSS^{n-1}_\varepsilon$ is the Euclidean sphere of radius $\varepsilon$.
Moreover, the surface $X$ is transverse to $\SSS^{n-1}_{\eps}$
so that $S_0\cap \SSS^{n-1}_{\varepsilon}$ is analytically
diffeomorphic to $\SSS^{1}$ and $S_0\cap\clos(B(\oo,\eps))$ is
analytically diffeomorphic to $\SSS^{1}\times]0,\eps]$.
% so that the
% variable $r$ ranging in $]0,\eps]$ is exactly the distance
% function to the origin in the ambient space $\R^n$. This
% representation of the surface $S_0$ as an open cylinder will be
% suitable for us in the OTC case. In the CTC case, however, the
% distance function is perhaps not the best choice.

\begin{definition}
Assume  $C_\oo (S_0)$ consists of the single oriented direction $\eta \in \SSS^{n-1}$.
A system of analytic coordinates $(\xx,z)=(x_1,\ldots,x_{n-1},z)$ at $\oo$ is called \em
adapted for $S_0$ \em if the half-line $\R_+\eta$ is the non-negative $z$-axis.
\end{definition}

Given adapted coordinates $(\xx,z)$ in the CTC case, taking the
height function $z$ instead of the distance function, the proof of
the Local Conic Structure's Theorem adapts to obtain the same
conclusion: the intersection $S_0\cap \{z=\varepsilon\}$ is transverse,
thus analytically diffeomorphic to $\mathbb{S}^{1}$
and $S_{0}\cap\{0<z\leq\eps\}$ is analytically diffeomorphic to
$\mathbb{S}^{1}\times]0,\eps]$ for $0<\eps\leq\eps_0$ once $\eps_0$ is
sufficiently small.

From now on, we fix some $\eps_0$ so that in both cases OTC or CTC,
the above properties coming from the locally conic structure are
satisfied. We consider a representative of $S_0$ in $\{0<z<\eps_0\}$,
where $z$ stands for the distance to the origin in the OTC case and
for the last component of an adapted system of coordinates in the CTC case.

\medskip
In what follows we will desingularize the surface $S_0$. First, it will
be convenient for us to \em open \em the surface $S_0$ by means of a single
blowing-up-like mapping $\beta$. Roughly speaking, we mean that the
inverse image of $S_0$ by $\beta$ accumulates to a one-dimensional set
in the exceptional divisor.\\
In the OTC case, the usual polar blowing-up $\beta:({\yy},r)\mapsto
r\yy$, for $\yy\in \SSS^{n-1}$ and $r$ the distance function,
``opens'' the surface $S_0$, since $\beta^{-1}(S_0)$ accumulates
onto $C_\oo (S_0)\subset\SSS^{n-1}$, a subanalytic curve.
\\
The CTC case, however, requires more work. Starting with an adapted
system of coordinates $(\xx,z)$, a first and naive candidate mapping
to ``open" the surface is a ``ramified blowing-up" of the form
$\beta_s:(\yy,w)\mapsto (w^s\yy,w)$, where $\yy\in\R^{n-1}$, for
a well chosen rational exponent $s>1$. Such an
exponent $s$ exists when the $z$-axis is contained in
the surface $S_0$. However the surface $\beta_s^{-1}(S_0)$ may accumulate
to a single point on the divisor $\beta_s^{-1}(z=0)$ (or escapes to infinity)
whatever the exponent $s$ is. In such a case the surface $S_0$ cannot
be opened with any such ramified blowing-up. In this situation, we consider
a given analytic half-branch on $S_0$ as new non-negative $z$-axis, and
in these new coordinates, a ramified blowing-up as above will open the
surface.
\\
The next technical lemma will detail such considerations.
First, an {\em analytic half-branch at the origin $\oo$ of $\R^n$} is the
germ at $\oo$ of a connected component $\Gamma$ of $Y\setminus\{\oo\}$,
where $Y$ is a one-dimensional analytic set through $\oo$. When
$\Gamma$ is contained in $\{z>0\}$, it is parametrized as the image of
an analytic mapping $z\mapsto(\theta(z),z^N),z>0$, where
$\theta=(\theta_1,\ldots,\theta_{n-1}):]-\varepsilon,
\varepsilon[\to\R^{n-1}$ is analytic with $\theta(0)=\oo$
and $N$ is a positive integer.

\begin{lemma}\label{lm:the-exponent-e}
Assume the tangent cone $C_\oo (S_0)$ is reduced to a point.
Let  $(\xx,z)$ be adapted analytic coordinates at $\oo$. \\
(i) There is a unique rational number $\nu>1$ such that the
accumulation set of the mapping $S_0 \ni (\xx,z) \to
\frac{\xx}{z^\nu} \in \R^{n-1}$
is a bounded subset of $\R^{n-1}$ and contains a point that is not $(0,\ldots,0)$.\\
(ii) There exists a unique positive rational number $e\geq \nu$ such
that the accumulation set of the mapping $S_0 \times S_0 \ni
((\xx,z),(\yy,z)) \to \frac{|\xx-\yy|}{z^e} \in \R$
is a bounded subset of
$\R$ containing a positive number. \\
(iii) Let $\Gamma:z \to (\theta (z),z^N)$ be a real analytic
half-branch at $\oo$ such that $\Gamma\subset S_0$. Then, the set of
accumulation values of the mapping $\tau_{e,\Gamma}:S_0\to\R^{n-1}$,
$(\xx,z) \mapsto \frac{\xx-\theta (z^{1/N})}{z^e}$ is a connected
bounded subanalytic set of dimension $1$.
\end{lemma}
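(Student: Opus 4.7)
The plan is to exploit the subanalyticity of $S_0$ together with the Puiseux expansion for subanalytic functions of one real variable. The fibers $S_{0,z_0} := S_0 \cap \{z = z_0\}$ are compact (diffeomorphic to $\SSS^1$ by the local conic structure in adapted coordinates), so the auxiliary functions
\[
\mu(z_0) := \max\{|\xx|^2 : (\xx, z_0) \in S_0\}, \quad
\delta(z_0) := \max\{|\xx - \yy|^2 : (\xx, z_0),\ (\yy, z_0) \in S_0\}
\]
are subanalytic on $(0,\eps_0)$, tend to $0$, and do not vanish identically ($\mu$ because $S_0$ is not contained in the $z$-axis, $\delta$ because each fiber is a circle, not a point).

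For (i), I would apply the Puiseux expansion to $\mu$ to obtain $\mu(z_0) = c\, z_0^{2\nu}(1+o(1))$ with $c>0$ and unique rational $\nu$, which must satisfy $\nu>1$ by the CTC condition $|\xx|/z \to 0$. This $\nu$ is exactly the exponent for which $|\xx|/z^q$ is bounded with positive $\limsup$: for $q<\nu$ it tends to $0$, for $q>\nu$ it is unbounded. Extracting a convergent subsequence from a sequence realizing the $\limsup$ yields a nonzero accumulation value of $\xx/z^\nu$, while the Puiseux estimate gives boundedness of the whole accumulation set. The same argument applied to $\delta$ yields (ii) with unique rational $e$, and $e\geq\nu$ follows from $\delta \leq 4\mu$ via the triangle inequality applied to two points of the same fiber.

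For (iii), I would treat the four required properties of the accumulation set $L$ of $\tau_{e,\Gamma}$ in turn. Boundedness is immediate from (ii) because $\theta(z_0^{1/N})$ lies on $S_{0,z_0}$, so $|\xx-\theta(z_0^{1/N})| \leq \sqrt{\delta(z_0)}$. Subanalyticity of $L$ follows from realizing it as the projection to $\R^{n-1}$ of $\overline{T} \cap \{z=0\}$, where $T \subset \R^{n-1} \times (0,\eps_0)$ is the bounded subanalytic graph of $(\xx,z) \mapsto (\tau_{e,\Gamma}(\xx,z),z)$. For connectedness, each $\tau_{e,\Gamma}(S_{0,z_0})$ is a connected continuous image of $\SSS^1$ containing $\oo$ (the image of $\theta(z_0^{1/N})$), so each $\overline{\tau_{e,\Gamma}(S_0 \cap \{0<z<\eps\})}$ is a compact connected set, and the nested intersection $L$ is connected.

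The dimension claim is the subtlest point. For the lower bound, by definition of $e$ there are fibers of diameter $\gtrsim z_k^e$; the triangle inequality forces at least one endpoint of a diameter-realizing pair to lie at distance $\gtrsim z_k^e/2$ from $\theta(z_k^{1/N})$, yielding an accumulation point of $\tau_{e,\Gamma}$ of norm bounded below, and hence distinct from $\oo \in L$, so the connected subanalytic set $L$ has at least two points and therefore dimension $\geq 1$. For the upper bound, I would invoke the subanalytic frontier theorem: $T$ has dimension $2$, so its frontier $\overline{T}\setminus T$ has dimension at most $1$; since $T \subset \{z>0\}$, the slice $\overline{T}\cap\{z=0\}$ lies inside this frontier, and its projection $L$ therefore has dimension at most $1$. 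I expect this upper bound to be the main obstacle, since it genuinely requires the subanalytic frontier inequality rather than any elementary geometric comparison with the circular fibers.
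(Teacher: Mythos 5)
Your proposal is correct and follows essentially the same route as the paper: parts (i)--(ii) rest on the Puiseux expansion of the subanalytic fiberwise sup (the paper's $h(z)=\sup\{|\xx|\}$ versus your $\mu(z)=\max\{|\xx|^2\}$), and part (iii) derives boundedness, connectedness, subanalyticity and the existence of two distinct accumulation values exactly as the paper does. You correctly supply two details the paper leaves implicit, namely $e\geq\nu$ via $\delta\leq 4\mu$ and the dimension upper bound via the subanalytic frontier theorem applied to the graph of $\tau_{e,\Gamma}$.
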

\begin{proof}
The uniqueness of $\nu$ and $e$ are clear. \\
For (i), let $h(z):=\sup\{|\xx| \mbox{ for } (\xx,z)\in S_0\}$.
The function $h$ is subanalytic and extends continuously to $z=0$ by
$h(0)=0$. Writing it as a Puiseux's series $h(z)=az^\nu+\cdots$ with
$a\neq 0$, the exponent $\nu$ satisfies the required properties: the
cuspidal nature of $S_0$ and the definition of adapted coordinates
imply that $\nu>1$.

\smallskip
\noindent We show the existence of $e$ of point (ii) similarly to point
(i): We take this time the function $h$ to be defined as
$h(z):=\sup\{|\xx-\yy| \mbox{ for } (\xx,z),(\yy,z)\in S_0\}$.

\smallskip
\noindent For (iii), let $\Lambda$ be the set of accumulation values of
the mapping $\tau=\tau_{e,\Gamma}$. Since $\Gamma$ is contained in $S_0$,
the origin $\oo$ of $\R^{n-1}$ is in $\Lambda$. By definition of the exponent
$e$ of point (ii), $\Lambda$ is bounded and contains a point $p\neq\oo$.
The connectedness and subanalyticity of $\Lambda$ follow from the connectedness
of $S_0$ and the subanalyticity of $\tau$.
\end{proof}

\begin{remark}\em
The numbers $\nu, e$ of Lemma \ref{lm:the-exponent-e} depend on the
adapted system of coordinates. Take in $\R^3$ the revolution surface
$x^2 + y^2 - z^5 = 0$, then $e=\nu=5/2$. Consider now the change of
coordinates $(x',y',z') = (x+z^2,y,z)$, then $e'=\nu'= 2$. \em
\end{remark}

The next result synthesizes the discussion about the
possible \em opening  \em of the surface $S_0$ by a single
blowing-up-like mapping. Its proof follows from Lemma
\ref{lm:the-exponent-e}.
\begin{proposition}\label{pro:opening-blow-up}
In the OTC case, let $M=\SSS^{n-1}\subset\R^n$ with
coordinates $\yy=(y_1,\ldots,y_n)$. In the CTC case, let
$M = \R^{n-1}$ with coordinates $\yy=(y_1,\ldots,y_{n-1})$. Let
$(\xx,z)$ be  adapted coordinates for $S_0$ at
$\oo$ and let $e\in\Q_{>1}$ be the exponent of point (ii) in
Lemma~\ref{lm:the-exponent-e} for these adapted coordinates and
let $z\mapsto(\theta(z),z^N)$ be a parametrization of an
analytic half-branch $\Gamma$ in $S_0$ such that $eN \in \N$.
Consider the following analytic mapping
\begin{equation}\label{eq:map-beta}
\begin{array}{rccl}
\beta:  & M \times [0,\eps_0] & \to & \R^{n}
\\
 & (\yy,z) &  \mapsto & \left\{
 \begin{array}{ll}
 z\yy, & \hbox{OTC case,} \\
(z^{eN}\yy + \theta (z),z^N), & \hbox{CTC case}
 \end{array}
 \right.
\end{array}
\end{equation}
Then $\beta$ induces a diffeomorphism from $M\times]0,\eps_0]$ onto
its image. Let $S:=\beta^{-1}(S_0)$, $D:=\{z=0\}\subset
M\times\R$ and $E:=\clos(S)\cap D$. Then $E$ is a closed bounded
connected subanalytic curve of $D$ of dimension one.
\end{proposition}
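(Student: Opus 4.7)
The plan is to exhibit an explicit analytic inverse of $\beta$ on $M\times]0,\eps_0]$ and then read off $E$ by restricting this inverse to $S_0$, invoking Lemma~\ref{lm:the-exponent-e} for the one-dimensionality, connectedness, boundedness and subanalyticity. In the OTC case, $\beta(\yy,z)=z\yy$ with $\yy\in\SSS^{n-1}$ is the standard polar blow-up, whose inverse on $\R^n\setminus\{\oo\}$ is the analytic map $p\mapsto(p/|p|,|p|)$. In the CTC case the assumption $eN\in\N$ makes $\beta(\yy,z)=(z^{eN}\yy+\theta(z),z^N)$ analytic on $M\times[0,\eps_0]$, and for a point $(\xx',z')$ in the image with $z'>0$ we can solve uniquely by $z=(z')^{1/N}$ (positive $N$-th root) and $\yy=(\xx'-\theta(z))/z^{eN}$, both analytic on their domains. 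Thus $\beta$ restricts to an analytic diffeomorphism of $M\times]0,\eps_0]$ onto its image.

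To identify $E=\clos(S)\cap D$, observe that its points are exactly the accumulation values of $\beta^{-1}$ along sequences in $S_0$ tending to $\oo$, since $\beta$ is a diffeomorphism for $z>0$. In the OTC case the formula $\beta^{-1}(p)=(p/|p|,|p|)$ immediately yields $E=C_\oo(S_0)\times\{0\}$, which is a compact connected subanalytic curve in $\SSS^{n-1}$ of dimension one by the OTC hypothesis together with the general properties of tangent cones recalled above. In the CTC case, the $M$-component of $\beta^{-1}(\xx,z)$ coincides with the map $\tau_{e,\Gamma}(\xx,z)=(\xx-\theta(z^{1/N}))/z^e$ of Lemma~\ref{lm:the-exponent-e}(iii), so $E=\Lambda\times\{0\}$ where $\Lambda$ is the accumulation set of $\tau_{e,\Gamma}$; part (iii) of that lemma then delivers precisely the required connectedness, boundedness, subanalyticity and one-dimensionality. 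Closedness of $E$ is automatic, as it is the intersection of two closed sets.

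There is no serious obstacle here: the real content has been packed into Lemma~\ref{lm:the-exponent-e}. The only subtlety worth highlighting, already encoded in the definition of $\beta$, is that in the CTC case a naive ramified blow-up $(\yy,w)\mapsto(w^s\yy,w)$ may fail to open $S_0$, since its pre-image can accumulate to a single point on the divisor for every choice of $s$. Subtracting the parametrization $\theta(z)$ of an analytic half-branch $\Gamma\subset S_0$ and using the exponent $e$ from Lemma~\ref{lm:the-exponent-e}(ii) is precisely what forces $\beta^{-1}(S_0)$ to accumulate on a genuinely one-dimensional set, which is the content of part (iii) of the same lemma.
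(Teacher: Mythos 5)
Your proof is correct and takes essentially the same route as the paper, which states only that the proposition ``follows from Lemma~\ref{lm:the-exponent-e}.'' You make this explicit by computing $\beta^{-1}$ on $M\times]0,\eps_0]$ and identifying $E$ with $C_\oo(S_0)\times\{0\}$ in the OTC case and with $\Lambda\times\{0\}$, the accumulation set of $\tau_{e,\Gamma}$, in the CTC case, which is precisely what Lemma~\ref{lm:the-exponent-e}(iii) was designed to control.
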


A mapping $\beta$ as in (\ref{eq:map-beta}) is called an {\em opening blow-up of
$\clos (S_0)$}. In the CTC case, $\beta$ depends on the adapted
system of coordinates, on the given curve $\Gamma$ on $S_0$ and
on the number $N$ in the parametrization of $\Gamma$. As we
will see, the choice of all these parameters will not matter for our purpose, so
we do not need the notation $\beta$ to carry these parameters.

\bigskip
For the rest of this section, assume that we have picked an opening
blow-up $\beta$ of the surface $S_0$. A key element in our result
relies on the construction of an explicit diffeomorphism between $S$
and the open cylinder $\SSS^1\times]0,\eps_0]$, which extends to a
global parameterization of $\clos(S)=S\cup E$: a surjective
continuous mapping $\Phi: \SSS^1 \times [0,\eps_0] \to \clos (S)$. For
this purpose, we first resolve the singularities of the surface
$\clos(S)$, also providing a resolution of the
singularities of $\clos (S_0)$ (up to ramification). Several
formulations are possible. The version we use is stated in the
following theorem, an avatar of the general theory on reduction of
singularities of real analytic space as found in Hironaka \& Al.
\cite{Hir,Aro-H-V} (see also \cite{Bie-Mil2}).
\begin{theorem}[Reduction of singularities of $S$]\label{th:reduction-singularities}
There exists a non-singular real analytic surface $\wt{S}$, a
normal crossing divisor $\wt{E}\subset\wt{S}$ and a
proper analytic mapping $\sigma:\wt{S}\to \UU$ where $\UU$ is an open
neighborhood $\UU$ of $E$ in $M\times\R$ such that:
\begin{itemize}
\item[(i)] $\clos(S)\cap \UU\subset\sigma(\wt{S})$ and $ \sigma^{-1}(E) \subset\wt{E}$,
\item[(ii)] $S'=\sigma^{-1}(S)$ is an open submanifold of $\wt{S}$
 and the restricted mapping $\sigma|_{S'}:S'\to  S\cap \UU$ is an isomorphism,
\item[(iii)] If $E'=\clos (S')\cap\wt{E}$, then $E'=\clos(S')\setminus S'$, it
 is a compact subanalytic connected curve
of $\wt{S}$ and $\sigma(E')=E$.
\item[(iv)] If $p\in E'$, there is a fundamental system of neighborhoods $\{\WW_k\}$
of $p$ in $\wt{S}$ such that any connected component of $\WW_k\setminus\wt{E}$ is either contained in
$S'$ or has empty intersection with $S'$.
\end{itemize}
\end{theorem}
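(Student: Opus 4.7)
The plan is to invoke Hironaka's theorem on the resolution of singularities of real analytic spaces, in the form established in \cite{Hir,Aro-H-V,Bie-Mil2}, applied to the pure two-dimensional real analytic set $Z:=\beta^{-1}(X)$ in a neighborhood of the compact curve $E$, and then to extract from the resulting smooth model the piece corresponding to the connected component $S$. Concretely, first fix a relatively compact open neighborhood $\UU_0$ of $E$ in $M\times\R$ on which $Z\cap\UU_0$ is a closed real analytic subset of pure dimension two. Embedded desingularization applied to the pair $(Z\cap\UU_0,\,D\cap\UU_0)$ yields a finite composition of analytic blowings-up with smooth centers $\pi:\wt{\UU}_0\to\UU_0$ such that the strict transform $Z'$ of $Z$ is smooth, the union of $\pi^{-1}(D)$ with the exceptional locus is a normal crossing divisor, and $Z'$ meets this divisor only transversally. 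One then defines $\wt{S}$ to be the union of those connected components of $Z'$ which intersect $\pi^{-1}(S\cap\UU_0)$, sets $\sigma:=\pi|_{\wt{S}}$ and $\wt{E}:=\pi^{-1}(D)\cap\wt{S}$, and shrinks the ambient neighborhood to an open $\UU$ around $E$ on which $\sigma$ is proper.

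Properties (i) and (ii) are then immediate from the construction: the centers of the blowings-up all project into $E\subset D$, hence $\pi$, and therefore $\sigma$, is an isomorphism off $\wt{E}$, so $\sigma$ restricts to an analytic isomorphism from $S':=\sigma^{-1}(S)$ onto $S\cap\UU$, and the inclusion $\clos(S)\cap\UU\subset\sigma(\wt{S})$ follows from properness of $\sigma$ and the density of $S'$ in $\wt{S}$ near $\wt{E}$. For property (iii), the identity $\clos(S')\cap\wt{E}=\clos(S')\setminus S'$ holds because $S'\cap\wt{E}=\emptyset$ and $\sigma$ is a local homeomorphism away from $\wt{E}$; compactness of $E'$ follows from properness of $\sigma$ and compactness of $E$ (Proposition \ref{pro:opening-blow-up}); subanalyticity is clear; $\sigma(E')=E$ follows from properness; and the connectedness of $E'$ reduces, via $\sigma$, to the connectedness of $E$ together with the connectedness of each fiber $\pi^{-1}(p)$ for $p\in E$, a standard consequence of iterating blowings-up with connected smooth centers.

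The main obstacle is property (iv). After the resolution above, $\wt{E}$ is a normal crossing divisor on the smooth surface $\wt{S}$, so at any $p\in E'$ the complement $\wt{S}\setminus\wt{E}$ has two or four local connected components (``quadrants''). Property (iv) requires that, in a small enough neighborhood of each such $p$, the open set $S'$ be a union of entire quadrants; equivalently, that no single quadrant of $\wt{S}\setminus\wt{E}$ at $p$ contains simultaneously points of $S'$ and points of $\wt{S}\setminus(S'\cup\wt{E})$. This is not automatic from bare resolution: at a crossing point of $\wt{E}$ two distinct local components of $\wt{S}\setminus\wt{E}$ could lie in two different components of $Z'\setminus\wt{E}$, one meeting $S'$ and the other not. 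The remedy, at each such offending point of $E'$, is a finite further sequence of point-blow-ups on the smooth surface $\wt{S}$; this is a finite combinatorial procedure modelled on the separation step of the Hironaka algorithm, and after composing $\sigma$ with these additional blow-ups one obtains a new smooth model in which $S'$ is, by construction, a union of quadrants near every point of $E'$, establishing (iv).
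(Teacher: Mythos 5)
Your general approach---apply Hironaka resolution near the compact curve $E$ to (a union involving) $\beta^{-1}(X)$ and $D$, extract the smooth model carrying $S$, and verify (i)--(iv)---matches the paper's. The step you call ``the main obstacle,'' however, is not an obstacle at all: property (iv) is an immediate consequence of the identity $\clos(S')\setminus S' = \clos(S')\cap\wt{E}\subset\wt{E}$ that you yourself establish while proving (iii). Fix $p\in E'$, take a chart $\WW\simeq\R^2$ at $p$ in which $\wt{E}$ is one or both coordinate axes, put $\WW_k=[-1/k,1/k]^2$, and let $Q$ be a connected component (a half-plane or quadrant) of $\WW_k\setminus\wt{E}$. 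The set $Q\cap S'$ is open in $Q$ since $S'$ is open; it is also closed in $Q$, because a limit point in $Q$ of $Q\cap S'$ not belonging to $S'$ would lie in $\clos(S')\setminus S'\subset\wt{E}$, contradicting $Q\cap\wt{E}=\emptyset$. As $Q$ is connected, $Q\subset S'$ or $Q\cap S'=\emptyset$. That is exactly (iv), on any normal-crossing model, with no further blowing-up. Your worry that a single quadrant could contain simultaneously points of $S'$ and of $\wt{S}\setminus(S'\cup\wt{E})$ is precisely what the boundary inclusion forbids. Consequently the final paragraph of your proposal---the unspecified ``separation step'' and the claim that finitely many additional point blow-ups cure the defect---should be deleted: as written it is not an argument (neither the termination nor the effect of the extra blow-ups is justified), and more importantly it is unnecessary. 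This is also how the paper proves (iv).

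Two secondary remarks. First, $Z=\beta^{-1}(X)$ already contains $D=\beta^{-1}(\oo)$, so it is not pure two-dimensional when $n>3$; the paper works instead with $Z=(X_1\cup D)\cap\UU$ where $X_1=\clos(\beta^{-1}(X\setminus\{\oo\}))$ is the two-dimensional strict transform. Second, (iii) asserts that $E'$ is a \emph{curve}, i.e. of dimension one, and you do not address this. The paper obtains compactness, connectedness, subanalyticity and the dimension statement all at once by exhibiting $E'$ as the Hausdorff limit of the circles $\sigma^{-1}(S\cap\{z=\eps\})$ and excluding the single-point case via the curve selection lemma; this is rather more robust than your reduction to connectedness of $E$ together with connectedness of the fibers of the blow-up tower.
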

\begin{proof}
Let $X_1=\clos(\beta^{-1}(X\setminus\{\oo\}))$ be the strict
transform of $X$ by the opening blowing-up $\beta$ and let
$Z=(X_1\cup D)\cap \UU$ on some open neighborhood $\UU$ of $E$ in
$M\times\R$. The general reduction of singularities applied to the real closed
analytic set $Z$ states there exists a proper surjective analytic mapping
$\pi:\widetilde{M}\to \UU$, composition of finitely many blowing-ups
with closed analytic smooth centers, such that the total transform
$\pi^{-1}(Z)$ has only normal crossings. Moreover, the smooth
centers of blowing-ups are chosen either to be contained in the
singular locus of the corresponding strict transform of $Z$ or in
the divisors created along the resolution process. Since $\sing
(Z)\cap \clos(S)\subset D$, the mapping $\pi$ induces an isomorphism
from $\pi^{-1}(\UU\setminus D)$ onto $\UU\setminus D$. Let
$\widetilde{S}$ be the irreducible component of $\pi^{-1}(Z)$
containing $\pi^{-1}(S)$. Let
$\widetilde{E}=\pi^{-1}(D)\cap\widetilde{S}$ and put
$\sigma=\pi|_{\widetilde{S}}$. Since $\widetilde{S}$ is closed in
$\widetilde{M}$ and $\sigma$ is proper, we obtain the first inclusion
in point (i). The second inclusion is given by construction.
Since $S\cap \UU\subset \UU\setminus D$ and $\pi$ is an isomorphism
on $\pi^{-1}(\UU\setminus D)$ we get point (ii).
To prove point (iii), we first remark that $E'=\clos(S')\setminus
S'$ as an easy consequence of (i). The properness of $\sigma$
ensures that $E'$ is the Hausdorff limit as $\eps \to 0$ of the
subanalytic family of compact sets
$\wt{\CC}_{\eps}=\sigma^{-1}(S\cap\{z=\eps\})$, each analytically
diffeomorphic to the circle, and so $E'$ is subanalytic, compact and
connected. It cannot be reduced to a single point $p$ since,
otherwise the curve selection lemma would show that
$S'\cup\{p\}\subset\wt{S}$ is locally open at $p$ and thus $p$ would
be isolated in $\wt{E}$ which cannot be. The properness of $\sigma$
is used again to prove that $\sigma(E')=E$.
Finally, for point (iv), let $\WW$ be an affine chart
at $p$, isomorphic to $\R^2$, such that $\WW\cap\wt{E}$ is either one
or the two coordinate axis. Let $\WW_k=[-1/k,1/k]^2$. A connected
component of $\WW_k\setminus\wt{E}$ is either a half-space or a
quadrant. Each contains a single connected component of
$\WW_{k+1}\setminus\wt{E}$. If the property described in point (iv)
does not hold, there will be points in $\WW_k\setminus\widetilde{E}$
which belong to the boundary $E'=\clos(S')\setminus S'$ of $S'$,
thus impossible since $E'\subset\widetilde{E}$.
\end{proof}

%\smallskip
A triple $\RR=(\wt{S},\wt{E},\sigma)$ satisfying the properties
(i-iv) of Theorem~\ref{th:reduction-singularities} will be called a
\textit{(total) resolution of singularities of $S$}. The curve
$\widetilde{E}$ will simply be called the \textit{divisor of the
resolution $\RR$}. The surface $S'=\sigma^{-1}(S)$ and $E'=\clos(S')\cap\wt{E}$ will be respectively
called the {\em strict transform} and the {\em strict divisor of the resolution}.
We will also speak of
$\mathcal{R}'=(S',E',\sigma'=\sigma|_{S'})$ as the {\em strict resolution of S
(associated to $\mathcal{R}$)}.

Let $\mathcal{R}=(\widetilde{S},\widetilde{E},\sigma)$ be  a
resolution and $p$ be a point of $\widetilde{E}$. Let
$\sigma_p:\wt{S}^1\to\wt{S}$ be the blowing-up of $\wt{S}$ at $p$.
This provides a new triple $\RR_p=(\wt{S}^1,\sigma_{p}^{-1}(\wt{E}),
\sigma\circ\sigma_p)$ which is a new resolution of singularities of
$S$.
\begin{definition}\label{def:resolution-dominates}
Let $\RR^1,\RR^2$ be two resolutions of the surface $S =
\beta^{-1}(S_0)$. If $\RR^2$ is obtained from $\RR^1$ by finitely
many successive points blowing-ups at points in the successive
corresponding divisors, we will say that \textit{$\RR^2$ dominates
$\RR^1$} and will write $\RR^2\succeq\RR^1$.
\end{definition}
A resolution dominating a given one will be obtained when we want to ``monomialize" one
or several functions on $S$ which are restrictions of analytic functions.
\begin{definition}\label{def:(S,H)-resolution}
Let $H=(h_1,\ldots,h_k)$ be a $k$-uple of real analytic functions in
a neighborhood of $E$ in $M\times \R_{\geq 0}$. A resolution
$\RR=(\widetilde{S},\widetilde{E},\sigma)$ of $S$ is {\em adapted to
$H$} (or briefly a {\em $(S,H)$-resolution}) if, for any $j$, the
composition $\widetilde{h}_j=h_j\circ\sigma$ has a {\em monomial
representation} at any point $p\in\widetilde{S}$: There are analytic
coordinates $(u,v)$ of $\widetilde{S}$ at $p$ such that
$\widetilde{h}_j=u^av^bG_j(u,v)$, where $a,b\in\N$, $G_j$ is
analytic and $G_j(0,0)\neq 0$.
\end{definition}
\begin{corollary}
Let $H=(h_1,\ldots,h_k)$ be as above and suppose that the restriction $h_j|_S$
has no critical point. Then there exists a resolution $\RR$ of $S$ such that, for any
$\RR^1\succeq\RR$, $\RR^1$ is a $(S,H)$-resolution.
\end{corollary}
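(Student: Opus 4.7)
The plan is to build the desired resolution in two stages---a principalization step to make the $\wt{h}_j$ monomial along the exceptional divisor, and a direct argument off the divisor using the submersion hypothesis on $h_j|_S$---and then to verify that the monomial property is preserved under further point blow-ups, so that every $\RR^1\succeq\RR$ is automatically adapted to $H$.

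First, I would start from any resolution $\RR_0=(\wt{S}_0,\wt{E}_0,\sigma_0)$ given by Theorem~\ref{th:reduction-singularities}. Each $h_j\circ\sigma_0$ is real analytic in a neighborhood of the compact curve $E'_0$ inside the smooth real analytic surface $\wt{S}_0$. I would then invoke the classical principalization of a finite family of real analytic functions on a smooth surface, realized by finitely many point blow-ups whose centers lie in the successive exceptional divisors (a standard consequence of the general reduction theorems used in the proof of Theorem~\ref{th:reduction-singularities}, see \cite{Hir,Aro-H-V,Bie-Mil2}). This yields $\RR=(\wt{S},\wt{E},\sigma)\succeq\RR_0$ such that each $\wt{h}_j=h_j\circ\sigma$ admits a monomial representation at every point of $\wt{E}$.

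Next, I would treat the points $p\in\wt{S}\setminus\wt{E}$; this is exactly where the hypothesis that $h_j|_S$ has no critical point is required. Near such a $p$, the map $\sigma$ is a local analytic diffeomorphism, and if $p\in S'$ then $\sigma(p)\in S$: either $\wt{h}_j(p)\neq 0$, in which case $\wt{h}_j$ is a local analytic unit and the monomial representation is trivial with exponents $(0,0)$, or $\wt{h}_j(p)=0$, in which case the submersivity of $h_j|_S$ at $\sigma(p)$ forces $\wt{h}_j$ to have nonzero differential at $p$, so it may serve as one of two local analytic coordinates at $p$, yielding $\wt{h}_j=u$. By shrinking the open neighborhood $\UU$ of $E$ at the outset, one may arrange that every point of $\wt{S}$ where $\wt{h}_j$ is defined lies in $\clos(S')$, and the boundary case $p\in\clos(S')\setminus(S'\cup\wt{E})$, if any, is handled by continuity of the monomial representation just constructed on $S'$.

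Finally, to promote the property to every $\RR^1\succeq\RR$, it suffices to verify stability under a single point blow-up $\sigma_p:\wt{S}^1\to\wt{S}$ centered at some $p\in\wt{E}$. Off the new exceptional fiber $\sigma_p$ is a local isomorphism, so the monomial property is inherited verbatim. On the new fiber, substituting the standard affine charts $(u,v)=(u_1,u_1v_1)$ or $(u_1v_1,v_1)$ into a local expression $\wt{h}_j=u^av^bG(u,v)$ with $G(0,0)\neq 0$ produces a monomial representation with exponents $(a+b,b)$ or $(a,a+b)$ and with unit factor $G(u_1,u_1v_1)$ or $G(u_1v_1,v_1)$ still non-vanishing at the new point. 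The main obstacle in this plan is the first step: one must quote the correct version of surface principalization in the real analytic category and check that it is realized purely by point blow-ups with centers in the divisor, so that the resulting $\RR$ genuinely dominates $\RR_0$ in the sense of Definition~\ref{def:resolution-dominates}; the remaining two ingredients---the off-divisor argument and the stability verification---are then routine.
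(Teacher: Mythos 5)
Your proof takes essentially the same approach as the paper's: a classical monomialization step realized by point blow-ups centered on the divisor, the no-critical-point hypothesis disposing of the points off the divisor, and an explicit check that a monomial representation is stable under a further point blow-up. The paper compresses all of this into a single sentence citing Bierstone--Milman and leaves the off-divisor case and the stability under $\RR^1\succeq\RR$ implicit; you have simply spelled those two routine ingredients out.
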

\begin{proof}
From classical results in local monomialization of analytic
functions in a smooth analytic manifolds (see for instance
\cite{Bie-Mil1}): just consider a resolution of $S$ and blow-up
the points of the divisor where the corresponding total
transform of the $h_j$ have not yet a monomial representation.
\end{proof}

The following terminology is needed to state the principal
result of this section. Let $N$ be a real analytic manifold with
real analytic smooth boundary $\partial N$ and $f:N\to\R$ be a
continuous map. The function $f$ is {\em ramified-analytic at a point
$p$ of $\partial N$}, if there are $l\in\N$ and analytic
coordinates $(\xx,z)$ at $p$ for which $N=\{z\geq 0\}$ and $\partial
N=\{z=0\}$, such that the mapping $(\xx,z)\mapsto f(\xx,z^{l})$ is
analytic at $({\bf 0},0)$. If $h:N\to M$ is a continuous mapping
into an analytic manifold $M$, the mapping $h$ will be called {\em
ramified-analytic at $p\in\partial N$} if, in some analytic coordinates of $M$,
%(and then in every) at $p$
its components are ramified-analytic at $p$.

\begin{remark}\label{rk:derivative-wrt-r}{\em
Let $f:N\to\R$ be a ramified-analytic function at some point
$p\in\partial N$, with analytic coordinates $(\xx,z)$ at $p$ for
which $N=\{z\geq 0\}$ and $\partial N=\{z=0\}$. The function
$z\partial_z f$ extends continuously, in a
neighborhood $\VV$ of $p$, into a function which is ramified-analytic
at $p$ and, moreover, vanishes along the boundary $\VV \cap
\partial N$.}
\end{remark}
\begin{proposition}\label{pro:parameterization-of-surfaces}
Let $\RR=(\wt{S},\wt{E},\sigma)$ be a $(S,z)$-resolution and $\RR'=(S',E',\sigma')$ be the associated
strict resolution. There exist $\eps>0$ and a continuous mapping
$\wt{\Phi}:\SSS^1\times[0,\eps]\to\wt{S} $ with the following properties:
\begin{itemize}
\item[(i)] It maps $\SSS^1\times\{r\}$ onto  $\sigma^{-1}(S\cap\{z=r\})$ for
$0<r\leq\eps$ and induces an analytic diffeomorphism between $\SSS^1\times]0,\eps]$
and $\sigma^{-1}(S\cap\{0<z\leq\eps\})$.
\item[(ii)] It maps surjectively $\SSS^1\times[0,\eps]$ onto $\clos(S')=S'\cup E'$ and it maps
$\CC=\SSS^1\times\{0\}$ onto $E'$.
\item[(iii)] The set $\Omega= \Omega(\wt{\Phi})=(\wt{\Phi})^{-1}(E'\cap {\sing\,}(\wt{E}))
\subset \CC$ is finite and $\wt{\Phi}$ is \em uniformly
ramified-analytic \em at any point of $\CC\setminus\Omega$: there
exists $l\in\N$ such that $(\vp,r)\mapsto\wt{\Phi}(\vp,r^l)$ is
analytic at every point of $\CC\setminus\Omega$.
\end{itemize}
\end{proposition}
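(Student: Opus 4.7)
My plan is to construct $\widetilde{\Phi}$ by combining the monomial local normal forms of $\widetilde{z}:=z\circ\sigma$ (guaranteed by the $(S,z)$-resolution hypothesis) with the analytic cylindrical structure of $S$ transferred through $\sigma'$. The proof splits into: producing the parameterization on the open cylinder (item (i)), extending it continuously across the boundary (item (ii)), and verifying the ramified-analytic structure away from a finite set $\Omega$ (item (iii)).

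Set $\Omega':=E'\cap\sing(\wt{E})$; this is a finite subset of $E'$ since $\wt{E}$ has normal crossings (so $\sing(\wt{E})$ is $0$-dimensional) and $E'$ is compact. At each $p\in E'\setminus\Omega'$, the $(S,z)$-resolution provides analytic coordinates with $\wt{E}=\{u=0\}$ and $\widetilde{z}=u^a G(u,v)$, $G(0,0)\ne 0$; the analytic substitution $w=u\,G(u,v)^{1/a}$ yields coordinates $(w,v)$ in which $\widetilde{z}=w^a$, and by condition (iv) of Theorem~\ref{th:reduction-singularities} I may orient so that $S'=\{w>0\}$ locally. The exponent $a=a_p$ is locally constant on $E'\setminus\Omega'$, and thus takes only finitely many values $a_1,\ldots,a_N$. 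At any corner $q\in\Omega'$, $\wt{E}=\{uv=0\}$ locally with $\widetilde{z}=u^av^b G$, $G(0,0)\ne 0$, and (iv) forces $S'$ to be a single quadrant, say $\{u>0,v>0\}$, locally near $q$.

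\textbf{Open cylinder part.} Pick $\eps>0$ small enough that the local conic structure of $S$ at $\oo$ supplies an analytic diffeomorphism $S\cap\{0<z\le\eps\}\simeq\SSS^1\times(0,\eps]$ preserving the $z$-coordinate. Composing with $(\sigma')^{-1}$ gives an analytic diffeomorphism $\widetilde{\Phi}_0:\SSS^1\times(0,\eps]\to\sigma^{-1}(S\cap\{0<z\le\eps\})$ sending $\SSS^1\times\{r\}$ onto $\wt{C}_r:=\sigma^{-1}(S\cap\{z=r\})$; this gives (i). To obtain the correct boundary behavior, I modify $\widetilde{\Phi}_0$ by reparameterizing the normal direction using the local formula $w=r^{1/a_p}$ in each chart at a smooth point of $E'$. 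Concretely, in such a chart the level curves $\wt{C}_r$ are the horizontals $\{w=r^{1/a_p}\}$, and one takes $\widetilde{\Phi}(\varphi,r)$ so that its $(w,v)$-coordinates are $(r^{1/a_p},\,v_p(\varphi,r))$ for an analytic function $v_p$ (unique once $\widetilde{\Phi}_0$ is reparameterized along each $\wt{C}_r$ so that $\varphi\mapsto v_p(\varphi,r)$ converges as $r\to 0^+$; this can be enforced by sliding $\widetilde{\Phi}_0$ along a smooth vector field $\xi$ on $\clos(S')\setminus\Omega'$ that is tangent to $\wt{E}$ and whose expression in each normal-form chart takes the form $\xi=w\alpha\,\partial_w+w\gamma\,\partial_v$ with $\alpha>0$, ensuring that $dv_p/dw=\gamma/\alpha$ is an analytic ODE with analytic extension to $w=0$).

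\textbf{Extension and ramified analyticity.} The family $\wt{C}_r$ converges in Hausdorff distance to $E'$ as $r\to 0^+$ (by properness of $\sigma$, as in the proof of Theorem~\ref{th:reduction-singularities}(iii)). At any smooth point $p\in E'\setminus\Omega'$, the previous local construction makes $\widetilde{\Phi}$ continuous at the corresponding preimage, and shows that $(\varphi,r)\mapsto\widetilde{\Phi}(\varphi,r^l)$, with $l:=\mathrm{lcm}(a_1,\ldots,a_N)$, is analytic there (since $w=(r^l)^{1/a_p}=r^{l/a_p}$ is a monomial of integer exponent and $v_p$ is analytic in $(\varphi,w)$). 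Near each corner $q\in\Omega'$, $\wt{C}_r$ enters the quadrant $\{u>0,v>0\}$ along a single hyperbolic arc $\{u^av^bG=r\}$ whose two ends exit along the two divisor components; continuity of $\widetilde{\Phi}$ combined with the Hausdorff limit forces this arc to be the image of a short arc of $\SSS^1$ whose preimage of $q$ is a single point. Collecting these yields a finite set $\Omega\subset\CC$ in bijection with $\Omega'$, and $\widetilde{\Phi}(\CC)=E'$, proving (ii) and (iii).

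\textbf{Main obstacle.} The hard step is the extension of $\widetilde{\Phi}$ across $\CC$ and the gluing near corners. One must simultaneously (a) choose the reparameterization of $\widetilde{\Phi}_0$ along each $\wt{C}_r$ so that a single limit exists as $r\to 0^+$ at every point of $E'\setminus\Omega'$, and (b) ensure that only finitely many ``stream lines'' accumulate at each $q\in\Omega'$. Point (a) reduces to the analytic solvability of the ODE $dv/dw=\gamma/\alpha$ on $w\in[0,\delta]$, which is automatic once $\xi$ has $\partial_v$-component vanishing along $\wt{E}$ (property obtainable by a careful partition-of-unity patching of the local Euler fields $\tfrac{1}{a_p}w\,\partial_w$, whose pairwise differences are already multiples of $w$). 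Point (b) is the single-quadrant picture at each corner together with the hyperbolic-arc structure of level curves in monomial coordinates, which controls the finite combinatorics of $\Omega$.
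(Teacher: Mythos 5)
Your strategy — parameterize $\clos(S')$ by transporting the circle $\wt{C}_\eps$ along the flow of a vector field transverse to the level sets of $\wt{z}$ — is essentially the paper's strategy, but there is a genuine gap precisely at the point you flag as the ``main obstacle'': the analysis at corners $q\in\Omega'=E'\cap\sing(\wt{E})$.

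The vector field you construct (patched Euler fields of the form $w\alpha\,\partial_w+w\gamma\,\partial_v$ at smooth points) lives only on $\clos(S')\setminus\Omega'$, and you never define it, nor analyse the local dynamics, at corners. But that is where all the difficulty lies: near a corner with coordinates $(u,v)$, $\wt{E}=\{uv=0\}$, $\wt{z}=u^av^bG$, the level curves $\wt{C}_r$ are hyperbolic arcs whose endpoints drift out along the two components of the divisor, and one must show that \emph{exactly one} parameter $\vp_0\in\SSS^1$ has $\wt{\Phi}(\vp_0,r)\to q$ (so $\Omega$ is in bijection with $\Omega'$) and that $\wt{\Phi}$ is continuous there. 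Saying ``continuity of $\wt{\Phi}$ combined with the Hausdorff limit forces this arc to be the image of a short arc of $\SSS^1$ whose preimage of $q$ is a single point'' is circular, since continuity at $(\vp_0,0)$ is what you need to prove, and ``controls the finite combinatorics of $\Omega$'' is not an argument. The paper sidesteps the whole issue by choosing the field to be $\xi=\nabla_{\wt{g}}(-\wt{z}^2)$ for an auxiliary analytic metric $\wt{g}$: this $\xi$ is globally defined, its trajectories cross each $\wt{C}_r$ orthogonally exactly once, \L ojasiewicz's gradient inequality gives a single $\omega$-limit point for every trajectory, and at a corner the divided field $u^{1-2l}v^{1-2m}\xi$ has a genuine saddle (eigenvalues of opposite sign, eigendirections transverse to both axes), so the stable manifold theory plus Hartman–Grobman give the unique separatrix to $q$ and continuity of the induced map. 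Your construction has no analogue of any of these three ingredients at corners.

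A secondary issue: at smooth points you write down local models $\tfrac{1}{a_p}w\,\partial_w$ in coordinates where $\wt{z}=w^a$, and claim their pairwise differences are multiples of $w$ so a partition-of-unity patching works. This is plausible but unverified; the local coordinate $w$ is not canonical, and you would need to check that the patched field actually admits the normal form $w\alpha\,\partial_w+w\gamma\,\partial_v$ with $\alpha>0$ in \emph{every} monomial chart, and that the divided field $\alpha\,\partial_w+\gamma\,\partial_v$ stays transverse to $\{w=0\}$ after patching. By contrast, the gradient of $-\wt{z}^2$ has these properties automatically (the paper verifies this by direct computation of $\xi$ in each type of chart). So even away from corners your construction needs more care than is given, and at corners it is missing entirely.
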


\medskip\noindent
Using Theorem \ref{th:reduction-singularities}, points (i), (ii) and
(iii) are true for $\Phi:=\sigma\circ\widetilde{\Phi}$ when replacing the strict
transforms $S'$ and $E'$ with the initial subsets $S$and $E$ respectively. Namely,
$\Phi$ maps surjectively $\SSS^1\times[0,\eps]$ onto $\clos(S)=S\cup E$, $\CC$ onto $E$
and $\SSS^1 \times ]0,\eps]$ diffeomorphically onto $S$, sending
$\SSS^1\times\{r\}$ onto $S\cap\{z=r\}$. Moreover, $\Phi$ is
uniformly ramified analytic at every point of $\CC\setminus\Omega$.
A mapping $\wt{\Phi}$ (or $\Phi$) satisfying points (i) to (iii)
of Proposition \ref{pro:parameterization-of-surfaces} is called a {\em parameterization
associated to the resolution $\RR$},  and the subset $\Omega$ in (iii) and is called
the {\em exceptional set of the parameterization $\wt{\Phi}$ (or $\Phi$)}.

\smallskip\noindent
\begin{proof}
Let $\RR=(\wt{S},\wt{E},\sigma)$ be a $(S,z)$-resolution. We construct a retraction of a
neighborhood of $\wt{E}$ in $\wt{S}$ onto $\wt{E}$ by integration of a certain
analytic vector field. It is just an avatar of the construction of a {\em Clemens
structure} on an analytic manifold equipped with a normal crossings divisor
(see \cite{Cle,Roc}).

\smallskip\noindent
Let $\wt{g}$ be an analytic Riemannian metric on $\wt{S}$,
whose existence is guaranteed by Grauert's Theorem on the analytic
embedding of analytic manifolds in Euclidean spaces \cite{Gra}.
Let $\tilde{z}:=z\circ\sigma:\wt{S} \to \R$. Let $\xi=\nabla_{\wt{g}}(-\tilde{z}^2)$
be the gradient vector field of $-\tilde{z}^2$ w.r.t the metric $\wt{g}$. Its
singular set is exactly the divisor $\wt{E}=\{\wt{z}=0\}$.

\smallskip\noindent
Let  $\eps$ be small enough so that $\sigma$ induces a
diffeomorphism from $\sigma^{-1}(S\cap\{0<z\leq\eps\})$ to
$S\cap\{0<z\leq\eps\}$. We can now consider $S$ just as being
$S\cap\{0<z\leq\eps\}$.

\smallskip\noindent
For $r\in ]0,\eps]$, let $\wt{\CC}_r=\tilde{z}^{-1}(r)=\sigma^{-1}(S\cap\{z=r\})$.
It is an embedded curve in $\wt{S}$ isomorphic to the circle $\SSS^1$. Let
$\rho:\SSS^1\to\wt{\CC}_\eps,\;\;\;\vp \mapsto\rho(\vp)$ be an
analytic diffeomorphism. For $p\in S'$, let $\gamma_p$ be the
maximal integral curve of $\xi$ with initial data $\gamma_p(0)=p$.
The parameterized curve  $\gamma_p$ is defined for times $t\geq 0$ and stays in $S'$.
Since the function $t\mapsto \tilde{z}(\gamma_p(t))$ strictly decreases to $0$ as $t$
goes to infinity
$\gamma_p$ cuts (orthogonally) each curve $\wt{\CC}_r$ for $r\in ]0, \tilde{z}(p)]$ only once.
Thanks to \L ojasiewicz's Gradient Inequality \cite{Loj}, the omega-limit set $\omega(\gamma_p)$
consists of a single point $R(p) \in E'$ and the mapping $R:\wt{S}\to E'$ is continuous
since $\wt{E}$ is compact. The following mapping is thus well defined:
\begin{equation}\label{eq:tildePsi}
\wt{\Phi}:\SSS^1\times[0,\eps]\to \wt{S},\;\;\;
\wt{\Phi}(\vp,r)= \left\{
\begin{array}{ll}
\vspace{4pt}
\wt{\CC}_r\cap\abs{\gamma_{\rho(\vp)}}, & \hbox{if $r\neq 0$;} \\
 R(\rho(\vp)), & \hbox{if $r=0$,}
\end{array}
\right. \end{equation}
where $\abs{\gamma_p}\subset\wt{S}$ is the image set of $\gamma_p$.
The restriction of $\wt{\Phi}$ to the open cylinder
$\SSS^1\times]0,\eps]$ is an analytic diffeomorphism onto $S'$, proving point (i).

\smallskip
\noindent
In order to obtain the continuity of $\wt{\Phi}$ and
properties (ii) and (iii), we will show that for $p\in E'$ there
exists $\vp_0\in\mathbb{S}^1$ such that
$\widetilde{\Phi}(\vp_0,0)=p$, $\widetilde{\Phi}$ is continuous at
$(\vp_0,0)$ and ramified-analytic if $p\in E'\setminus \sing(\widetilde{E})$.

\smallskip
\noindent
Let $p\in E'\setminus\sing\wt{E}$.
Let $(u,v)$ be analytic coordinates  at $p$ such that $\tilde{z}(u,v)= v^m$ with
$m\geq 1$ and $\wt{E}=\{v=0\}$. From point (iv) of Theorem~\ref{th:reduction-singularities},
there is a neighborhood $\VV$ of $p$ such that the half-space $\{v>0\}$
is contained in $S^\prime$.
The metric writes $\wt{g}=A\rd u^2+2B\rd u \rd v+C\rd v^2$, and we obtain
\begin{center}
\vspace{4pt}
$
\xi=2({\rm det\,}\wt{g})^{-1}(Bmv^{2m-1}\frac{\partial}{\partial
u}-Amv^{2m-1}\frac{\partial}{\partial v}).
$
\vspace{4pt}
\end{center}
Since $A(p)\neq 0$, the divided vector field $\xi' :=v^{1-2m}\xi$ is
not singular, transverse to the divisor $\wt{E}$ at $p$ and
generates the same foliation as $\xi$ on $\{v\neq 0\}$.
Thus there exists a trajectory
$\absgamma$ of $\xi$ with $\omega(\gamma)=p$ which extends smoothly
and analytically through of $p$ as a trajectory $|\gamma'|$ of
$\xi'$. Going backwards in time, $\absgamma$ cuts
$\wt{C}_\eps$ at a point $\rho(\vp_0)$ for some $\vp_0\in \SSS^1$.
Thus $p=R(\rho(\vp_0))= \widetilde{\Phi}(\vp_0,0)$. Let $\gamma_q'$
be the trajectory of $\xi'$ through a point $q\in \VV$. Since $\xi'$
is not singular in $\VV$ and transverse to the fibers $v=cst$, up
to shrinking $\VV$, the following mapping
\begin{center}
\vspace{4pt} $H:\VV\times]-\delta,\delta[\to\wt{S}$, $(q,t) \mapsto
H(q,t) := v^{-1}(t) \cap |\gamma_q'|$, \vspace{4pt}
\end{center}
is analytic. Fix $v_0>0$ such that $\gamma$ cuts $v^{-1}(v_0)$
inside $\VV$ and denote $\psi:\SSS^1\to\wt{C}_{v_0^{1/m}}$,
$\psi(\vp)=\wt{\Phi}(\vp,v_0^{1/m})$, an analytic diffeomorphism. By
construction the mapping we are looking for satisfies
\begin{center}
\vspace{4pt}$
\wt{\Phi}(\vp,r)=H(\psi(\vp),r^{1/m})
$\vspace{4pt}
\end{center}
in some neighborhood of $(\vp_0,0)$ and thus is ramified analytic at that point.\\
The number $m$ can be chosen constant for each connected component
of $E\setminus \sing(\wt{E})$, which are finitely many.
Thus there is a uniform ramification index $l$ along $\CC\setminus\Omega$.
So we get (iii).

\smallskip
\noindent Let $p\in E'\cap {\sing}\,\wt{E}$. Let $(u,v)$ be analytic
coordinates at $p$ such that $\tilde{z}(u,v)= u^lv^m$ with $l,m\geq
1$ and $\wt{E}=\{uv=0\}$. From point (iv) of Theorem
\ref{th:reduction-singularities} we assume that the first quadrant
$Q=\{u>0,v>0\}$ is contained in $S'$ for $u,v$ small enough. The
metric writes as $\wt{g}=A\rd u^2+2B\rd u\rd v+C\rd v^2$, and we
obtain
\begin{center}
\vspace{4pt}
$
\xi=2({\rm
det\,}\wt{g})^{-1}u^{2l-1}v^{2m-1}[(-lCv+mBu)\frac{\partial}{\partial
u}+(lBv-mAu)\frac{\partial}{\partial v}].
$
\vspace{4pt}
\end{center}
Since $\wt{g}$ is positive definite, the divided vector field
$\xi'=u^{1-2l}v^{1-2m}\xi$ has a saddle-type singularity
at $p$: its linear part $L_p$ at $p$ has two non-zero eigen-values
with opposite sign. Moreover, each eigen-direction is transverse to the
$u$-axis and $v$-axis, namely the components of $\wt{E}$ at $p$.
The only trajectories of $\xi'$ with $\omega$-limit point $p$
are the two connected components of $W^s \setminus \{p\}$, where $W^s$ is the
local stable manifold at $p$. Since $\xi$ and $\xi'$ are positively
proportional on $Q$, the separatrix $W^s\cap Q\subset S'$ is a trajectory
$|\gamma_q|$ of $\xi$ and thus $\omega(\gamma_q)=p$.
 Going backwards in time, $|\gamma_q|$ cuts
$\widetilde{C}_\varepsilon$ at a point $\rho(\vp_0)$ for some
 $\vp_0\in\SSS^1$ and thus $\widetilde{\Phi}(\vp_0,0)=p$.
Let $H:\clos(Q)\times[0,\delta[\to\wt{S}$, where $H(q,t)$ is the intersection
point of the trajectory of $\xi'$ through the point $q$ with the level curve
$\{u^lv^m=t\}$. As in the previous case, continuity at $p$ of the mapping
$\wt{\Phi}$ will follow from the continuity at
$p=(0,0)$ of the mapping $H$.  This property is easily obtained by explicit
computation when the vector field $\xi'$ is linear, and
we can reduce to this case using Hartman-Grobman Theorem
(see for instance  \cite{Pal-Mel}).
\end{proof}
\begin{definition}\label{def:uara}
Let $\Omega$ be a finite subset of $\CC$ (such as the
exceptional set of a parameterization $\wt{\Phi}$ in the
proposition above). An analytic mapping $F: \SSS^1\times]0,\eps] \to
N$, is called {\em uniformly almost ramified-analytic (with respect to $\Omega$)}
if there exists some $l\in\N$ such that $(\vp,r)\mapsto
F(\vp,r^l)$ can be extended as an analytic mapping at any point of
$\CC\setminus\Omega$. To be shorter, we will either write $\Omega$-u-a-r-a or simply u-a-r-a
if the subset $\Omega$ is understood.
\end{definition}
Part (iv) of Proposition~\ref{pro:parameterization-of-surfaces} says
that $\wt{\Phi}$ (or $\Phi$) is an u-a-r-a mapping with respect to
the exceptional set $\Omega$.
Since ramified-analyticity at any point of $\CC\setminus\Omega$
is inherited from the construction of $\Phi$ and uniformity comes
from the compactness of $E$, another typical situation example we will
come across in the sequel is the following: if $h$ is a continuous
function in a neighborhood of $E\subset M\times\R_{\geq 0}$ which is
ramified-analytic along $E$ (with respect to $D=M\times\{0\}$),
the composite mapping $h^\Phi=h\circ\Phi$ is $\Omega$-u-a-r-a.
\section{Asymptotic expansions of restricted functions}\label{section:expansion-function}

A {\em $\Q$-generalized (real) formal power-series} is a formal
expansion $ G(T) = \sum_{k \geqslant 0} a_k T^{\alpha_k}, $ where
$(\alpha_k)_{k\geqslant 0}$ is a strictly increasing sequence of
non-negative rational numbers and each coefficient $a_k$ is a real
number. It is said {\em convergent} if there exists $t_0>0$ such
that the sequence of $m$-partial sum functions $G_m:\R_{\geq
0}\to\R$, $G_m(t)=\sum_{k=0}^m a_k t^{\alpha_k}$, converges
uniformly in $[0,t_0]$, thus given rise to a continuous function,
also denoted $G:[0,t_0]\to\R$, analytic for $t> 0$, called
{\em the sum of the convergent series}. If the exponents $\alpha_k$
are in $\frac{\N}{l}$ for some positive integer $l$, then $G(T)$ is
called a {\em Puiseux series}. If all but finitely many coefficients
$a_k$ are non-zero then $G(T)$ is a {\em $\Q$-generalized real
polynomial}.

\smallskip
Let $X\subset\R^n$ be an analytic isolated surface singularity at
$\oo$ and let $S_0$ be a connected component of $X\setminus\{\oo\}$.
Let $\beta:M\times\R_{\geq 0}\to\R^n$ be an opening blowing-up of
$S_0$ and denote $S=\beta^{-1}(S_0)$, $D=\{z=0\}=M\times\{0\}$,
$E=\clos(S)\cap D$ as in the previous section.

\smallskip\noindent
Let $f:\UU\to\R$ be a continuous function in $\UU$, a neighborhood of
$E$ in $M\times\mathbb{R}_{\geq 0}$, which is {\it ramified-analytic
along $D$}. Let $f_S:\clos(S)\to\R$ be the restriction of $f$ to
$\clos(S)=S\cup E$. Given a $(S,z)$-resolution
$\RR=(\wt{S},\wt{E},\sigma)$ and an associated parameterization
$\wt{\Phi}:\SSS^1\times[0,\varepsilon]\to\wt{S}$ as in
Proposition~\ref{pro:parameterization-of-surfaces}, we denote by
$f^\Phi: = f_S\circ\Phi =f _S\circ\sigma\circ\wt{\Phi}:\SSS^1\times[0,\eps]\to\R$.

\smallskip
This Section is devoted to prove the following result,
establishing an asymptotic expansion of the restricted function
$f_S$ w.r.t. the height coordinate $z:M\times\R\to\R_{\geq 0}$ (let
again $(\vp,r)$ be the standard coordinates on
$\SSS^1\times[0,\eps]$).
\begin{proposition}\label{prop:(z,S)-relative-principal-expansion}
Assume that $f$ is not identically vanishing on $S$.
One and only one of the following two properties is satisfied:

\noindent (a) There exists a $\Q$-generalized real formal
power-series $G (T) = \sum_{k \geqslant 0} a_k T^{\alpha_k}$ which
is an {\em asymptotic expansion of $f_S$} in the following sense:
for any positive integer $m$, there exists a neighborhood $\VV_m$ of $E$ in
$\clos(S)$ and a bounded function $g_m:\VV_m\to\R$ such that, for any $(\yy,z)\in \VV_m$
with $z \neq 0$,
\begin{equation}\label{eq:asymptotic-expansion-of-fS}
 f_S(\yy,z) = \sum_{k=0}^{m-1} a_k z^{\alpha_k} +
 z^{\alpha_{m}}g_m(\yy,z).
\end{equation}
Moreover, the formal power series
$G(T)$ is a convergent Puiseux series and $f_S(\yy,z)=G(z)$ for any
$(\yy,z)\in S$ in a neighborhood of $E$.

\smallskip
\noindent
(b) Given an initial resolution of $S_0$, there exists a dominating
resolution $\RR^0$ adapted to the function $z$, a
$\Q$-generalized polynomial $P(T) = \sum_{k=0}^m a_k T^{\alpha_k}$
and a rational number $\alpha>\alpha_m$ such that, for any
resolution $\RR\succeq\RR^0$ and any associated parameterization
$\wt{\Phi}$, the mapping $f^\Phi:\SSS^1\times[0,\eps]\to\R$ writes
as
\begin{equation}\label{eq:expression-fR}
f^\Phi(\vp,r) = P(r) + r^{\alpha} F(\vp,r),
\end{equation}
where $F$ is a continuous function on $\SSS^1\times[0,\eps]$ and its
restriction to $\CC:=\SSS^1\times\{0\}$ is not constant. Moreover,
$F$ is u-a-r-a with respect to the exceptional set $\Omega$ of
$\Phi$.
\end{proposition}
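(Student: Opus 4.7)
The plan is to fix, once and for all, a resolution $\RR^0$ (dominating any given initial resolution) that is simultaneously adapted to $z$ and to $f$, and then to read off the asymptotic behavior of $f^\Phi$ from the local monomial representations. Since $f$ is only ramified-analytic along $D$, a preliminary ramification $r \mapsto r^{l_0}$ of the opening blow-up turns $f$ into an analytic function near $E$. Applying the monomialization result (the corollary following Definition~\ref{def:(S,H)-resolution}, generalized to allow critical points of $f|_S$) gives a resolution $\RR^0$ at which both $\wt{z} = z \circ \sigma$ and $\wt{f} = f \circ \sigma$ are locally monomial at every point of $\wt{E}$; any $\RR \succeq \RR^0$ inherits the property. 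Fix such an $\RR$, an associated parameterization $\wt{\Phi}$, and its exceptional set $\Omega \subset \CC$.

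The central construction is an inductive expansion: one builds strictly increasing positive rationals $\alpha_0 < \alpha_1 < \cdots$ in $\frac{1}{l}\N$ (for some fixed $l$) and continuous $\Omega$-u-a-r-a functions $a_k : \SSS^1 \to \R$ satisfying
\begin{equation*}
f^\Phi(\vp, r) = \sum_{k < m} a_k(\vp)\, r^{\alpha_k} + r^{\alpha_m} F_m(\vp, r), \qquad m \geq 0,
\end{equation*}
with each $F_m$ continuous and bounded on $\SSS^1 \times [0, \eps]$, analytic on $\SSS^1 \times ]0, \eps]$, and $\Omega$-u-a-r-a. To initialize: at $\vp_0 \in \SSS^1 \setminus \Omega$ the monomials $\wt{z} = v^m U$ and $\wt{f} = v^d V$ at $\wt{\Phi}(\vp_0, 0)$, together with the identity $\wt{z} \circ \wt{\Phi}(\vp, r) = r$, yield a local expression $f^\Phi(\vp, r) = r^{d/m} H(\vp, r)$ with $H$ continuous. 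The exponent $d/m$ is constant on each connected component of $\SSS^1 \setminus \Omega$; letting $\alpha_0$ be its global minimum, the ratio $f^\Phi/r^{\alpha_0}$ extends as a bounded continuous function to $(\SSS^1 \setminus \Omega) \times [0, \eps]$, defining $a_0$ off $\Omega$. At each $\vp^* \in \Omega$ the parallel analysis at a crossing point $\wt{\Phi}(\vp^*, 0)$ uses $\wt{z} = u^l v^m U$, $\wt{f} = u^c v^d V$ on the quadrant $Q \subset S'$; boundedness of $H$ on $\clos(Q)$ together with continuity of $f^\Phi$ (inherited from Proposition~\ref{pro:parameterization-of-surfaces}) forces $f^\Phi/r^{\alpha_0}$ to extend continuously through $\vp^*$, completing $a_0$. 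When $a_0$ is constant, subtract $a_0\, r^{\alpha_0}$ and iterate on the remainder.

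The dichotomy is now clean. If every $a_k$ is constant, set $G(T) = \sum_k a_k T^{\alpha_k}$: the boundedness of $F_m$ immediately yields \eqref{eq:asymptotic-expansion-of-fS}. To see that $G$ is a convergent Puiseux series, pick any $\vp_0 \in \SSS^1 \setminus \Omega$: because $\wt{\Phi}$ is ramified-analytic at $(\vp_0, 0)$ and $\wt{f}$ is analytic, $r \mapsto f^\Phi(\vp_0, r)$ is a convergent Puiseux series in $r$; comparing term by term with the asymptotic expansion forces it to equal $G(r)$, so $G$ converges and $f^\Phi(\vp_0, r) = G(r)$. Continuity in $\vp$ then propagates this identity to all of $\SSS^1$, giving $f_S = G \circ z$ on $S$ near $E$. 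Otherwise let $m$ be the first index with $a_m$ non-constant; setting $P(T) = \sum_{k < m} a_k T^{\alpha_k}$, $\alpha = \alpha_m$ and $F = F_m$ realizes case (b). Independence of $(P, \alpha)$ from the particular $\RR \succeq \RR^0$ follows from the uniqueness of the asymptotic expansion together with the fact that two parameterizations associated to dominating resolutions differ by an analytic reparametrization of $\SSS^1 \times ]0, \eps]$ which preserves each level $\{r = \text{const}\}$.

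The main obstacle I expect is the continuous extension of the coefficients $a_k$ across $\Omega$. At a crossing point the local model is a monomial at a corner of $\wt{E}$ and $\wt{\Phi}$ is only continuous there, so the ratios $f^\Phi/r^{\alpha_k}$ cannot be expanded analytically at $(\vp^*, 0) \in \Omega$; one must establish only that they remain bounded and continuous, which reduces to boundedness of $H$ on the closed quadrant $\clos(Q)$ combined with the continuity of $\wt{\Phi}$ at $\vp^*$. A subsidiary subtlety is that the local exponents at two different components of $\SSS^1 \setminus \Omega$ may disagree; this is reconciled by taking the global minimum at each stage, at the harmless cost of forcing some $a_k$ to vanish identically on part of $\SSS^1$.
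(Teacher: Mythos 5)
Your general strategy mirrors the paper's: monomialize $z$ and $f$ via a resolution, extract principal parts along the bottom circle, and iterate as long as these parts are constant. The final step in case (a), comparing with the Puiseux series $r\mapsto f^\Phi(\vp_0,r)$ at a fixed generic $\vp_0$, is also essentially what the paper does with an analytic half-branch $\Gamma\subset S$. However, there is a genuine gap in the inductive step, and a related weakness in the definition of the exponent.

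\textbf{The inductive step needs fresh resolutions.} You fix a single resolution $\RR^0$ adapted to $(f,z)$ at the outset and then claim the expansion
$f^\Phi = \sum_{k<m} a_k r^{\alpha_k} + r^{\alpha_m}F_m$ can be built entirely over $\RR^0$. This fails already at the second stage. Once $a_0$ is constant, the remainder $f_1 = f - a_0 z^{\alpha_0}$ is a new (ramified-)analytic function on a neighborhood of $E$, and $\RR^0$ need not be adapted to $f_1$. Concretely, at a corner $p\in E'\cap\sing\wt{E}$ with $\wt z=u^{l_2}v^{m_2}U_2$, $\wt f=u^{l_1}v^{m_1}U_1$, the first ratio $h'=\wt f/\wt z^{\alpha_0}$ extends continuously to $p$ as $u^{a}v^{b}U_1/U_2^{\alpha_0}$ with $a,b\ge 0$, and the fact that $a_0\neq 0$ forces $a=b=0$; but then $h'-a_0 = U_1/U_2^{\alpha_0}-a_0$ is an analytic function vanishing at $p$ whose zero locus need not lie in $\wt E$. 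In that case $\wt f_1$ is \emph{not} a monomial at $p$, and the boundedness-plus-continuity argument you use for the extension of $a_1$ across $\Omega$ has no monomial model to lean on. The paper handles this precisely by applying Lemma \ref{lm:principal-parts}(ii) \emph{anew} to $f_1$, producing a dominating resolution $\RR^1\succeq\RR^0$ and a new parameterization $\Phi^1$, and repeating (the resolutions form a chain $\RR^0\prec\RR^1\prec\cdots$). Since case (b) terminates after finitely many steps, a single final resolution works for the statement; but one must \emph{construct} it stage by stage, not announce it in advance.

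\textbf{Intrinsic versus post-resolution exponent.} You define $\alpha_0$ as the minimum of the local monomial exponents $d/m$ over the arcs of $\SSS^1\setminus\Omega$. This is resolution-dependent, and your argument that this choice makes $f^\Phi/r^{\alpha_0}$ bounded \emph{and} continuous across $\Omega$ is asserted rather than proved: boundedness on $\clos(Q)$ does not by itself give a continuous extension, and what actually does the work is the observation that boundedness forces both corner exponents $l_1-\alpha_0 l_2$ and $m_1-\alpha_0 m_2$ to be non-negative, after which the explicit monomial form $u^{l_1-\alpha_0 l_2}v^{m_1-\alpha_0 m_2}\,U_1/U_2^{\alpha_0}$ is manifestly continuous. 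The paper avoids these difficulties by first introducing the exponent $\alpha$ intrinsically, via the subanalytic function $\mu(t)=\max\{|f(\yy,t)|:(\yy,t)\in S\}$ (Lemma \ref{lm:principal-parts}(i)); boundedness of $f_S/z^\alpha$ on all of $S$ is then immediate, it transfers directly to every quadrant $\clos(Q)$ in every dominating resolution, and the independence of $\alpha$ (hence of $P$) from $\RR$ and $\Phi$ is automatic. You should first establish this intrinsic exponent, then use the local monomial computations only to prove the continuous extension — that is the clean ordering, and it also streamlines your final remark about independence of $(P,\alpha)$ from $\RR\succeq\RR^0$.
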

The proof will follow from the following lemma.
\begin{lemma}\label{lm:principal-parts}
With the hypotheses and notations of Proposition
~\ref{prop:(z,S)-relative-principal-expansion}, we find:

(i) There exists a unique $\alpha=\alpha(f_S)\in\Q_{\geq 0}$ such
that the quotient $f_S/z^\alpha$ is bounded on $S$ and cannot have
the value $0$ as a single  accumulation value as $z\to 0^+$. The
number $\alpha$ is called the {\em exponent of the restricted
function $f_S$ (with respect to $E$)}.

(ii) Given an initial resolution of $S_0$, there exists a dominating
$(S,z)$-resolution $\RR^0$ such that, for any other resolution
$\RR\succeq\RR^0$ and any associated parameterization $\Phi:\SSS^1\times[0,\eps]\to S$,
the quotient function $f^\Phi/r^\alpha = (f/z^\alpha)\circ \Phi$ is well defined and analytic on
$\SSS^1\times]0,\eps]$ and extends to a continuous function on
$\SSS^1\times[0,\eps]$. Its restriction to the bottom circle
$\CC$ will be denoted by $\IN^\Phi(f)$ and called
the {\em initial part of the restricted function $f_S$ (relative to
$\Phi$).}

(iii) An initial part $\IN^\Phi(f)$ like in point (ii) is constant
if and only $f_S/z^\alpha$ has a unique accumulation value as $z \to 0$.
\end{lemma}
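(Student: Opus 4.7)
The plan is to transfer the problem to a resolution $\RR^0$ of $S$, dominating the given initial resolution, that simultaneously monomializes the ramified-analytic functions $z$ and $f$ (for the latter, after passing to a ramified cover to make $f$ analytic and then descending if necessary). At each point $p$ of the strict divisor $E' = \clos(S')\cap\wt{E}$ there will then exist local analytic coordinates $(u,v)$ in which $S'$ is locally a half-space or a quadrant, $\wt{E}$ is $\{uv=0\}$ or $\{v=0\}$, and
\[
\wt{z} = u^{a}v^{b}\,U_p(u,v), \qquad \wt{f} = u^{c}v^{d}\,V_p(u,v),
\]
with $U_p, V_p$ (ramified-)analytic units, $a,b \in \N$ and $c, d \in \Q_{\geq 0}$. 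The local exponent $\alpha(p) := \min(c/a, d/b)$, with the convention that the quotient is $+\infty$ when the denominator vanishes, records the largest $\beta$ for which $\wt{f}/\wt{z}^\beta$ stays bounded near $p$ on $S'$.

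For point (i), I would set $\alpha := \min_{p \in E'} \alpha(p)$. Covering the compact set $E'$ by finitely many monomial charts makes $\alpha$ a well-defined non-negative rational number, attained at some $p_0$. Since $\alpha \leq \alpha(p)$ everywhere, in each chart the quotient $\wt{f}/\wt{z}^\alpha = u^{c-a\alpha}v^{d-b\alpha}(V_p/U_p^\alpha)$ has non-negative exponents, hence extends continuously and bounded to $\clos(S')$; composing with $\sigma|_{S'}$ shows $f_S/z^\alpha$ is bounded on $S$ near $E$. At $p_0$ one of $c-a\alpha$, $d-b\alpha$ vanishes, so the extension takes a nonzero value along the corresponding branch of $E'$ at $p_0$, producing a nonzero accumulation value for $f_S/z^\alpha$ as $z\to 0^+$. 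Uniqueness follows by contradiction: if some $\alpha'\neq\alpha$ also satisfied both conditions, then along a sequence realising a nonzero accumulation value for $f_S/z^{\min(\alpha,\alpha')}$ the quotient $f_S/z^{\max(\alpha,\alpha')}$ would diverge, contradicting boundedness.

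For (ii), any $\RR \succeq \RR^0$ inherits monomial representations of $\wt{z}$ and $\wt{f}$ because point blow-ups preserve monomiality, so the continuous extension of $\wt{f}/\wt{z}^\alpha$ to $\clos(S')$ built above remains valid. On $\SSS^1\times\,]0,\eps]$, Proposition~\ref{pro:parameterization-of-surfaces}(i) gives that $\Phi$ is an analytic diffeomorphism onto $S$, so $f^\Phi/r^\alpha = (f_S/z^\alpha)\circ\Phi$ is analytic there. Composing the continuous extension of $\wt{f}/\wt{z}^\alpha$ with the continuous map $\wt{\Phi}:\SSS^1\times[0,\eps]\to\wt{S}$ (continuous even at the exceptional set $\Omega$ over corners of $\wt{E}$, by Proposition~\ref{pro:parameterization-of-surfaces}(iii)) yields the desired continuous extension of $f^\Phi/r^\alpha$ to $\SSS^1\times[0,\eps]$. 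For (iii), since $\wt{\Phi}|_\CC$ maps $\CC$ surjectively onto $E'$, the initial part $\IN^\Phi(f)$ is constant on $\CC$ iff $(\wt{f}/\wt{z}^\alpha)|_{E'}$ is constant on $E'$; by continuity on $\clos(S')$, this in turn holds iff $f_S/z^\alpha$ has a unique accumulation value as $z\to 0^+$.

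The step I expect to dominate the technical work is the initial one: guaranteeing a resolution that monomializes both $z$ and the ramified-analytic $f$ (possibly via a ramified cover, since the corollary in the text is formulated under a ``no critical point'' assumption), and verifying that the locally defined $\alpha(p)$ assembles into a globally well-defined rational $\alpha$ compatible across charts. Once that monomial framework is in place, the rest of the lemma is essentially bookkeeping of exponents together with the continuity of the parameterization at the exceptional points.
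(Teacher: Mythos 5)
Your proposal is essentially correct, but you take a genuinely different route to part (i). The paper establishes (i) \emph{before} appealing to any resolution: it introduces the one-variable subanalytic function $\mu(t)=\max\{|f(\yy,t)|:(\yy,t)\in S\}$, which is continuous, vanishes identically iff $f_S$ does, and therefore admits a Puiseux expansion $\mu(t)=at^\alpha+\cdots$ with $a>0$; reading $\alpha$ off this expansion immediately gives boundedness of $f_S/z^\alpha$ together with a nonzero accumulation value, and uniqueness is free. You instead build $\alpha$ from local exponent data $\alpha(p)=\min(c/a,d/b)$ in an $(S,f,z)$-resolution and take the minimum over $E'$. Both are valid, but the paper's one-variable argument is cheaper: it makes $\alpha$ visibly resolution-independent and defers the monomial bookkeeping to part (ii), where it is genuinely needed; your version front-loads the resolution machinery into (i) and has to verify coherence of the local exponents across charts. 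For (ii) and (iii) your arguments match the paper's in substance: monomial expression of $h'=\wt f/\wt z^\alpha$ at points of $E'$, non-negativity of exponents forced by boundedness, composition with the parameterization, and surjectivity of $\Phi|_\CC$ onto $E$ (resp.\ of $\wt\Phi|_\CC$ onto $E'$) for (iii). One detail you gloss over that the paper treats explicitly: at a regular point $p'\in E'\setminus\sing\wt E$ \emph{both} half-planes $\{v>0\}$ and $\{v<0\}$ may be contained in $S'$, in which case positivity of $\wt z$ forces $m_2$ to be even and the monomial $v^{m_1-\alpha m_2}$ must be read as $v^{m_1}/(v^{m_2})^\alpha$ for the extension of $h'$ to be continuous across $\{v=0\}$. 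Finally, your worry about the ``no critical point'' hypothesis in the paper's corollary on $(S,H)$-resolutions is a fair reading of the text, but it is not an obstruction here: local monomialization of $(z,f)$ along the compact set $E'$ is available without that hypothesis, which is what the paper tacitly uses when it takes a $(S,f,z)$-resolution in the proof of (ii).
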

\begin{proof}
\noindent By definition  of a ramified-analytic function along $D$
and since $E$ is a compact subset of $D$, there exists a positive
integer $l\in\N$ such that the function $\overline{f}:({\bf
y},z)\mapsto f({\bf y},z^l)$ is analytic in a neighborhood of $E$ in
$M\times\R$. If we prove the Lemma for the analytic function
$(\overline{f})_S := \overline{f}|_S$, we obtain the exponent $\overline{\alpha}$.
Then $\alpha:=\bar{\alpha}/l$ is the exponent of $f_S$ with respect to $E$ and it
satisfies (i)-(iii). For the rest of the proof, we suppose that
$f$ is analytic in a neighborhood of $E$ in $M\times\mathbb{R}$.

\smallskip
\noindent Proof of (i). The uniqueness of the exponent $\alpha$ is
immediate from its definition. Consider the following function
\begin{center}
\vspace{4pt}
$\mu (t) = \max\{|f(\yy,t)|$ for $(\yy,t) \in S\}$.
\vspace{4pt}
\end{center}
It is well defined since $S \cap \{z=t\}$ is compact for $t>0$. The function
 $\mu$ is subanalytic, continuous and identically
zero only if $f_S$ is. So assuming that $f_S$ does not vanish
identically on $S$, there exists a positive real number $a$ and a
non-negative rational number $\alpha$ such that $t^{-\alpha} \mu (t)
\to a$ as $t\to 0$. This proves the claim.

\smallskip
\noindent Proof of (ii). Assume we are given a first resolution.
Let $\RR^0$ be a $(S,f,z)$-resolution dominating it.
Any other resolution $\RR=(\wt{S},\wt{E},\sigma)$ dominating
$\RR^0$ is still a resolution adapted to $f$ and $z$. Let
$\wt{\Phi}:\SSS^1\times[0,\eps]\to\wt{S}$ be a parameterization
associated with $\RR$. The function $h=z^{-\alpha}f_S$ is analytic,
continuous and bounded on $S\cap\{0<z<\eps\}$ for some $\eps>0$. Let
$S'=\sigma^{-1}(S)$, $E'=\clos(S')\setminus S'$ be respectively the
strict transform of $S$  and the strict divisor of the resolution
$\RR$ (see the notations of Theorem~\ref{th:reduction-singularities}).
Let $h^\prime=h\circ\sigma:S'\to\R.$ Thus
$r^{-\alpha}f^\Phi=h^\prime\circ\wt{\Phi}$. Since $\wt{\Phi}$ is
continuous and maps $\CC$ onto $E'$, there is just to prove that
$h^\prime$ extends to a continuous function up to $E^\prime$. We
also write $h^\prime=\wt{z}^{-\alpha}\wt{f_S}$ where
$\wt{f_S}=f_S\circ\sigma$ and $\wt{z}=z\circ\sigma$.

\smallskip
\noindent
First, let $p'\in E'\setminus \sing (\wt{E})$.
There are analytic coordinates $(u,v)$ of $\wt{S}$ at $p'$ such that $\wt{E}=\{v=0\}$ and
$\{v>0\}\subset S'$ using (iv) of Theorem \ref{th:reduction-singularities}. Since $\RR$
is a resolution adapted to $f$ and $z$, we write
\begin{center}
\vspace{4pt}
$\wt{f_S}(u,v)=u^{l_1}v^{m_1}U_1(u,v),\;\;\wt{z}=v^{m_2}U_2(u,v)$
\vspace{4pt}
\end{center}
for some integers $l_1,m_1,m_2\in\N$ and invertible analytic functions $U_1,U_2$ with
$U_2(0,0)>0$. For $(u,v)$ close to $p'=(0,0)$ with $v>0$, we find
\begin{equation}\label{eq:tilde-h-in-regular-point}
h^\prime(u,v)=v^{m_1-\alpha
m_2}\frac{u^{l_1}U_1(u,v)}{U_2(u,v)^\alpha}.
\end{equation}
Since $h^\prime$ is bounded on $\{v>0\}$ necessarily  $m_1\geq\alpha m_2$
and the right hand term in Equation (\ref{eq:tilde-h-in-regular-point})
defines a continuous function on $\{v\geq 0\}$. If $\clos(S')\subset\{v\geq 0\}$
nearby $p'$, we get the desired conclusion. If instead $\{v<0\}\subset S'$,
necessarily $m_2$ is even since $\wt{z}$ is positive on $S'$. In this case,
the monomial $v^{m_1-\alpha m_2}$ in expression
(\ref{eq:tilde-h-in-regular-point}) must be read
as $v^{m_1}/(v^{m_2})^\alpha$. The function $h'$ turns out to be
continuous in a neighborhood of $p' = (0,0)$.

\smallskip\noindent
Suppose now that $p'\in E'\cap\sing\wt{E}$. There
are analytic coordinates $(u,v)$ of $\wt{S}$ at $p'$ with
$\wt{E}=\{uv=0\}$ and  $\{u>0,v>0\}\subset S'$ and such that we can
write
\begin{center}
\vspace{4pt}
$\wt{f_S}(u,v)=u^{l_1}v^{m_1}U_1(u,v),\;\;\wt{z}=u^{l_2}v^{m_2}U_2(u,v)$
\vspace{4pt}
\end{center}
for some $l_1,m_1,l_2,m_2\in\N$ and analytic functions $U_1,U_2$ with
$U_1(0,0)\neq 0$, $U_2(0,0)>0$. This time, for small and positive $u,v$, we have
\begin{equation}\label{eq:tilde-h-in-singular-point}
h^\prime(u,v)=u^{l_1-\alpha l_2}v^{m_1-\alpha
m_2}\frac{U_1(u,v)}{U_2(u,v)^\alpha}.
\end{equation}
Since the function $h'$ is bounded in a neighborhood of $p'$ in $S'$,
$l_1-\alpha l_2$ and $m_1-\alpha m_2$ are both non-negative.
The continuity of $h'$ follows by the same arguments as in the previous case.

\smallskip\noindent
Proof of (iii). It follows by continuity of $f^\Phi/r^\alpha$, proved in (ii),
the properness of $\Phi$ and that $\Phi$ maps $\mathcal{C}$ onto $E=\clos(S)\cap\{z=0\}$.
\end{proof}

\noindent {\em Proof of
Proposition~\ref{prop:(z,S)-relative-principal-expansion}.} Let
$\alpha_0\in\Q_{\geq 0}$ be the exponent of $f$ with respect to $E$.
Let $\RR^0$ be a $(S,z)$-resolution
and $\Phi^0$ be an associated parameterization satisfying
the properties of  (ii) in Lemma~\ref{lm:principal-parts}.
\\
If the initial part $\IN^{\Phi^0}(f)$ is not constant then
we are in case (b) of the proposition with $P=0$ and $\alpha=\alpha_0$.
\\
Assume now
$\IN^{\Phi^0}(f) \equiv a_0\in\R^*$. The function $f_1: = f-a_0z^{\alpha_0}$ is
ramified-analytic along $D$.  If
$f_1|_S\equiv 0$ then we are in case (a). Otherwise,
using Lemma~\ref{lm:principal-parts}, let $\alpha_1\in\Q_{\geq 0}$
be the exponent of $f_1$ w.r.t $E$. By definition of the exponent,
we find $\alpha_1>\alpha_0$. Let $\RR^1$ be a $(S,z)$-resolution
with $\RR^1\succeq\RR^0$ and $\Phi^1$ an associated
parameterization for which the initial part $\IN^{\Phi^1}(f_1)$ of
$f_1$ exists as in part (ii). If $\IN^{\Phi^1}(f_1)$ is not constant
we are in case (b) as above and we are done, otherwise we continue
this process.

\noindent Suppose there exists a sequence of $(S,z)$-resolutions
$\{\RR^k\}_{k\geq 0}$ with $\RR^{k+1}\succeq\RR^k$, associated
parameterizations $\Phi^k$ and a $\Q$-generalized  power series
$G(T)=\sum_{k\geq 0}a_kT^{\alpha_k}$ such that, for any $m\geq 0$,
$\alpha_m$ is the exponent of the function
$f_m=f-\sum_{k=0}^{m-1}a_kz^{\alpha_k}$ and the principal part $in^{\Phi^m}(f_m)$
is a constant function equal to
$a_m \neq 0$. The definition of the exponent $\alpha$ gives
directly the asymptotic expansion of $f_S$ as in equation
(\ref{eq:asymptotic-expansion-of-fS}).
Let $\Gamma\subset S$ be an analytic half-branch accumulating to a
single point in $E$, parameterized by the variable $z$. Let
$L:]0,\eps]\to\R$ defined as $L(z)=f_S(\Gamma(z))$.
By (\ref{eq:asymptotic-expansion-of-fS}), we have for any $m\geq 0$ and
$z$ sufficiently small,
\begin{center}
\vspace{4pt}
$L(z)-\sum_{k}^{m-1}a_kz^{\alpha_k}=O(z^{\alpha_m}),$
\vspace{4pt}
\end{center}
that is, that $G(T)$ is the asymptotic expansion of $L$ as $z\to
0^+$. Since $L$ is a semi-analytic function, $G(T)$
is a convergent Puiseux series. Thus $L(z)=G(z)$, where $G$ is
considered here as the sum of the expansion $G(T)$. We define
$G_S:S\to\R$ by $G_S(\yy,z)=G(z)$, an analytic function on $S$ which
depends only on $z$. We have shown that the restrictions of $f_S$
and $G_S$ on $\Gamma$ coincide. Since $\Gamma$ can be chosen
arbitrarily, $f_S=G_S$ on the whole surface $S$. This proves
statement (a) of the Proposition.\\
Finally, $F=\frac{f^\Phi-P}{r^\alpha}$ is u-a-r-a since both
$f^\Phi$ and $P$ are so. \hfill{$\square$}
\begin{remark}\label{rm:about-F}{\em
Although $F$ depends on the resolution $\RR$ and on the
associated parameterization $\Phi$, we insist it is of the special following form:
\begin{center}
$F=g\circ\Phi\;\;$ with $\;\;g := (\frac{f^\Phi-P}{r^\alpha})$.
\end{center}
The function $g$ is continuous in a neighborhood of $E$ in $M\times\R_{\geq
0}$, ramified-analytic along $E$, and depends on $f$ and the
opening blowing-up $\beta$ only.}
\end{remark}
\section{Oscillation vs Spiraling in singular surfaces}\label{section:dynamic-on-cylinder}
Let $\gamma:[0,+\infty[\to\R^n$ be an analytically parameterized curve
such that $\lim_{t\to +\infty}\gamma(t)=\oo\in\R^n$ and $\oo$ does not belong to
$\absgamma$, the image of  $\gamma$.
\begin{definition}
A parameterized curve $\gamma$ is said {\em (analytically)
non-oscillating} if for any semi-analytic subset $H$ of $\R^n$,
either $\absgamma$ is contained in the subset $H$ or the
intersection $\absgamma\cap H$ consists at most of finitely many
points. If, on the contrary, there exists a semi-analytic set $H$
such that $\absgamma$ is not contained in $H$ and the intersection
$\absgamma\cap H$ has infinitely many points then we will say that
$\gamma$ is {\em oscillating relatively to $H$}.
\end{definition}
The notion of oscillation clearly depends only on the germ at $\oo$ of
the image $\absgamma$ of the parameterized curve $\gamma$, not on any
given parameterization.

\medskip
In dimension $2$, the notion of {\it spiraling} around a given point
is a special case of oscillation for a curve.
A convenient definition is found in \cite{Can-M-S}. We generalize this
notion for a curve $\absgamma$ contained in an analytic isolated surface singularity
$X\subset \R^n$ at the origin $\oo$ and accumulating at $\oo$.

\smallskip\noindent
Let $X$ be an analytic surface with an isolated singularity
at $\oo\in\R^n$. Let $S_0$ be a connected component of
$X\setminus\{\oo\}$. Let $\Gamma$ be an analytic half-branch at $\oo$ contained in
$S_0$. For a small enough simply connected neighborhood $\VV$ of (the germ at $\oo$ of)
$\Gamma$ in $S_0$, the curve $\Gamma\cap \VV$ separates $\VV\setminus \Gamma$ into
two connected components which we call the \textit{two local sides of $\Gamma$ in $S_0$}.
\begin{definition}
The curve $\gamma:[0,+\infty[\to S_0\subset X\setminus\{\oo\}$ {\em
spirals in $X$} if, for any analytic half-branch $\Gamma$ at $\oo$
in $S_0$, there exists an increasing sequence $(t_k)_{k\in\N} \subset \R_{> 0}$
with $t_k \to +\infty$ such that for each $k$:
\begin{center}
\vspace{4pt}
$\gamma ([t_k,t_{k+1}[) \cap \Gamma = \{\gamma (t_k)\}$,
$\;\;\gamma(t_k-\eps_k) \in \VV^-\;$ and
$\;\gamma(t_k+\eps_k) \in \VV^+$,
\vspace{4pt}
\end{center}
for $\eps_k >0$ small and where $\VV^-$, $\VV^+$ are the local sides of $\Gamma$ in $S_0$.
\end{definition}
When $\gamma$ is a trajectory of a real analytic vector field in a
neighborhood of $\oo\in\R^2$, a Rolle-Khovanskii's argument proves
that the only oscillating dynamics at $\oo$ is spiraling (see
\cite{Can-M-S}). Proposition~\ref{pro:oscillation=spiraling} below
extends this result to analytic isolated surfaces
singularities.

\smallskip\noindent
Let $\xi_0$ be an analytic vector field on $S_0$ which extends
continuously and subanalytically to the origin by $\xi_0(\oo)=0$,
as a mapping from $\clos(S_0)$ to $T\R^n|_{\clos(S_0)}$.
\begin{proposition}\label{pro:oscillation=spiraling}
 Assume that
$\xi_0$ does not vanish in $S_0$. Let $\gamma:[0,+\infty[\to S_0$ be
a non-trivial trajectory of $\xi_0$ accumulating at $\oo$. Then
$\gamma$ is oscillating if and only if it spirals in $X$.
\end{proposition}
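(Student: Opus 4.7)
The implication \emph{$\gamma$ spirals $\Rightarrow$ $\gamma$ oscillates} is immediate: any analytic half-branch $\Gamma$ at $\oo$ in $S_0$ is a semi-analytic set in $\R^n$, meets $|\gamma|$ in the infinitely many points $\gamma(t_k)$ provided by the spiraling definition, and is not contained in $|\gamma|$. For the converse, I prove the contrapositive: \emph{if $\gamma$ does not spiral in $X$, then $\gamma$ is non-oscillating}. The strategy is to lift the dynamics to the open cylinder through Proposition~\ref{pro:parameterization-of-surfaces} and reduce to a simply-connected planar strip, where one can invoke the planar Rolle-Khovanskii argument of \cite{Can-M-S}.

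Fix an opening blow-up $\beta$ of $S_0$, a total resolution $\RR=(\wt{S},\wt{E},\sigma)$ of $S=\beta^{-1}(S_0)$, and an associated parameterization $\wt\Phi$. Let $\Psi=\beta\circ\sigma\circ\wt\Phi$; it collapses $\CC=\SSS^1\times\{0\}$ onto $\{\oo\}$ and induces an analytic diffeomorphism between the open cylinder $C^*=\SSS^1\times\,]0,\eps]$ and a punctured neighbourhood of $\oo$ in $S_0$. Lift $\gamma$ and $\xi_0$ to $\wt\gamma=\Psi^{-1}\circ\gamma$ and to an analytic non-vanishing vector field $\wt\xi$ on $C^*$. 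Since $\gamma(t)\to\oo$, we have $r(\wt\gamma(t))\to 0$, and the $\omega$-limit set $L:=\omega(\wt\gamma)\subset\CC$ is compact and connected; hence either $L=\CC$ or $L$ is a proper closed sub-arc of $\CC\cong\SSS^1$ (possibly a single point).

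If $L=\CC$, I claim $\gamma$ must already spiral in $X$, contradicting the hypothesis. Indeed, $L=\CC$ means $\wt\gamma$ accumulates at every angle, which forces the lift $\wht\gamma$ of $\wt\gamma$ to the universal cover $\R\times\,]0,\eps]$ to have unbounded angular coordinate; equivalently, $\wt\gamma$ winds around $C^*$ infinitely many times as $t\to+\infty$. For any analytic half-branch $\Gamma\subset S_0$, its lift $\wt\Gamma=\Psi^{-1}(\Gamma\setminus\{\oo\})$ is a ramified-analytic arc in $C^*$ accumulating at a unique point $q_\Gamma\in\CC$; each winding of $\wt\gamma$ crosses $\wt\Gamma$ transversally, the windings being monotone with respect to the cyclic order on $\CC$, so all crossings occur in the same angular sense. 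Transporting back by $\Psi$ yields the spiraling condition for $\gamma$ with respect to the arbitrary $\Gamma$. Under the contrapositive assumption we therefore have $L\subsetneq\CC$: pick an open arc $V\subset\CC$ with $\clos(V)\cap L=\emptyset$; for suitable $T,\delta>0$ the tail $\wt\gamma([T,+\infty[)$ lies in the open strip $\Sigma=(\SSS^1\setminus\clos(V))\times\,]0,\delta[$, which is simply connected and analytically diffeomorphic to an open subset of $\R^2$, with $\wt\xi$ analytic and non-vanishing on it. Inside $\Sigma$, $\wt\gamma$ cannot spiral around any interior point since $\wt\xi$ has no zero, so the planar Rolle-Khovanskii theorem of \cite{Can-M-S} implies that $\wt\gamma$ meets every semi-analytic subset of $\Sigma$ in only finitely many points. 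Since any semi-analytic $H\subset\R^n$ pulls back through the ramified-analytic $\Psi$ to a semi-analytic subset of $\Sigma$ (Khovanskii-type finiteness being preserved under ramification), this transfers to finiteness of $|\gamma|\cap H$ whenever $|\gamma|\not\subset H$, proving $\gamma$ non-oscillating.

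The main obstacle is the case $L=\CC$: rigorously establishing that accumulation of the injective trajectory $\wt\gamma$ on the whole bottom circle forces its universal-cover lift to have unbounded angular coordinate, and deducing from this the full spiraling condition with respect to \emph{every} analytic half-branch in $S_0$ (and not just one test curve). A secondary technical point is the management of the finite exceptional set $\Omega\subset\CC$ where $\wt\Phi$ fails to be ramified-analytic; this can be circumvented either by refining $\RR$ so as to separate $\Omega$ from $L$, or by enlarging $V$ to swallow the points of $\Omega\setminus L$ before lifting to the strip $\Sigma$.
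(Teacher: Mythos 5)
Your overall structure is close to what the paper does, and the implication \emph{spiral $\Rightarrow$ oscillate} is fine, but there is a genuine gap at the step you yourself flag.

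The paper does not need the blow-up, the cylinder, or the $\omega$-limit set at all for this proposition; it argues directly on $S_0$. Negating ``spirals'' gives an analytic half-branch $\Gamma\subset S_0$ for which the spiraling condition fails, and this happens in exactly one of two ways: (a) the germ at $\oo$ of $|\gamma|\cap\Gamma$ is empty, or (b) $|\gamma|\cap\Gamma$ is infinite but the crossings are not consistently from one local side to the other. Case (a) is then handled exactly as you do: $S_0\setminus\Gamma$ is simply connected (it is a cylinder minus a half-branch), so Haefliger/Khovanskii makes $|\gamma|$ a Rolle leaf there, hence non-oscillating. Your $L\subsetneq\CC$ branch is a version of this, just transported to the cylinder via $\Psi$, and is essentially sound (the remark about $\Psi|_\Sigma$ being an analytic embedding is what makes the pull-back argument legitimate).

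The problem is your $L=\CC$ branch. You want to conclude that $\gamma$ spirals, which requires both (i) that $\wt\gamma$ actually winds (unbounded angular coordinate on the universal cover) and (ii) that it crosses every half-branch in a single angular sense. Neither is automatic from $L=\CC$: an injective trajectory can accumulate on the whole bottom circle while the angular coordinate oscillates back and forth without diverging, and even with winding, crossings of a given $\wt\Gamma$ need not all have the same sign. What rules this out is precisely the paper's case~(b), which you are missing: if $\gamma$ meets $\Gamma$ infinitely often near $\oo$ without a consistent crossing direction, an intermediate-value/Rolle argument produces infinitely many tangency points of $\xi_0$ with $\Gamma$ accumulating at $\oo$. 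Since $\xi_0$ extends subanalytically with $\xi_0(\oo)=\oo$ and $\Gamma$ is an analytic half-branch, the tangency locus is subanalytic of dimension $\le 1$ inside the one-dimensional $\Gamma$, so these accumulating tangencies force $\Gamma$ to be a trajectory of $\xi_0$ near $\oo$ — which is impossible, because then $\gamma$ could not cross $\Gamma$ at all. That short argument is exactly what upgrades ``$L=\CC$'' to ``monotone winding, hence spiraling'' in your framework, and without it the contrapositive is not established. I would encourage you to either insert that Rolle/subanalyticity lemma to close the $L=\CC$ branch, or, more economically, to drop the $\omega$-limit dichotomy and argue directly with the half-branch dichotomy (a)/(b) on $S_0$, which avoids the blow-up entirely.
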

\begin{proof}
If $\gamma$ spirals then it is oscillating. Suppose that
$\gamma$ does not spiral. There exists an analytic half-branch
$\Gamma$ in $S_0$ such that either

\noindent (a) the germ at $\oo$ of the intersection
$\absgamma\cap\Gamma$ is empty, or

\noindent (b) $\absgamma\cap\Gamma$ is infinite but $\gamma$ does
not cross $\Gamma$ from one fixed local side of $\Gamma$ to the other side
at those intersection points.

\noindent If (b) happens, a Rolle's argument implies that $\Gamma$
is tangent to $\xi_0$ at infinitely many points accumulating
to $\oo$. The subanalyticity of $\xi_0$ implies that the
half-branch $\Gamma$ is a trajectory of $\xi_0$, contradicting the
oscillation of $\gamma$ relatively to $\Gamma$. So (b) is
impossible.

\noindent Assume we are in case (a). Since the surface $S_0$ is
analytically diffeomorphic to a cylinder, $S_0\setminus\Gamma$ is a
simply connected analytic manifold. Using Haefliger's Theorem
\cite{Hae,Kho,Mou-R}), we deduce that any leaf of the real analytic
foliation induced by $\xi_0$ in $S_0\setminus\Gamma$ is a {\em
Rolle's leaf}.
In particular, the curve $\absgamma
\subset S_0\setminus\Gamma$ is a Rolle's leaf and cannot cut
infinitely many times any analytic half-branch contained in
$S_0\setminus\Gamma$. Thus $\gamma$ is non-oscillating.
\end{proof}

Despite of the similarities between spiraling in a smooth surface and in an analytic
isolated surface singularity, there is however a very important difference.
The existence, for a trajectory $\gamma$, of a {\em tangent at the origin}, that is
the limit of secants $\lim_{t\to\infty}\frac{\gamma(t)}{|\gamma(t)|}$ exists,
prevents, in the smooth surface situation, from spiraling around the origin.
For an isolated surface singularity, although in the OTC case this argument is still valid,
in the CTC situation, the curve $\gamma$ will always have a tangent
at the origin corresponding to the direction of the tangent cone, regardless
if it is spiraling or not

\noindent A criterion stronger than the existence of tangent
to imply non-spiraling is that the lifting of $\gamma$ by a
reduction of singularities of the surface accumulates to a single
point on the exceptional divisor.
\\
We will use this criterion through its lifting on $\SSS^1\times [0,\eps]$
via a parameterization as in
section~\ref{section:ParameterizationSurfaces}.

\smallskip
\noindent {\bf Criterion for non-spiraling.}
Let $\RR$ be a resolution of $S=\beta^{-1}(S_0)$ where $\beta$ is an opening
blowing-up of $S_0$. Let $\Phi:\SSS^1\times[0,\eps]\to S$ be a parameterization
associated to $\RR$.
Assume that $\absgamma\subset S_0$ and suppose the $\omega$-limit set
$\omega(\overline{\gamma})$ of the lifted curve $\overline{\gamma}=
(\beta\circ\Phi)^{-1}\circ\gamma$ is such that $\CC\setminus\omega(\overline{\gamma})$
contains an open non-empty arc. Then $\gamma$ does not spiral in $X$.

\smallskip\noindent
The proof is easy: the stated property will imply that
$\overline{\gamma}$ does not intersect a given analytic half-branch
$\overline{\Gamma}$ on $\SSS^1\times[0,\eps]$ through a point
$p\in\CC\setminus(\omega(\overline{\gamma})\cup\Omega)$ where
$\Omega$ is the exceptional set of $\Phi$. Therefore, $\gamma$ does
not intersect the curve
$\Gamma=(\beta\circ\Phi)(\overline{\Gamma})\subset S_0$, which is an
analytic half-branch by properness of the resolution and the
property that $\Phi$ is ramified-analytic at $p$. Thus $\gamma$ does
not spiral in $X$.

\medskip
The next result describes, for a vector field $\xi_0$ on $S_0$, two types of dynamics
ensuring that none of its trajectories accumulating at the origin is spiraling.
These types correspond to
either ``dicritical'' or ``non-monodromic'' dynamics similar to those in the
plane gradient case met in Section \ref{Section:plan-article}.
\begin{proposition}\label{pro:dic-or-nonmon}
%With the same notations as above,
Assume that $\xi_0$ does not vanish in $S_0$. Suppose that the
transformed vector field $\overline{\xi}=(\beta\circ\Phi)^\ast\xi_0$
on the open cylinder $\SSS^1\times]0,\eps]$ satisfies one of the
following non-exclusive situations:
%(we use coordinates $(r,\vp)$ on the cylinder):

\smallskip\noindent
\em (a) Dicritical case: \em There exist a point $p\in \CC \setminus
\Omega$ and a neighborhood $\UU$ of $p$ in $\SSS^1\times[0,\eps]$
disjoint from $\Omega$ in which $\overline{\xi}$ writes as
\vspace{4pt}
\begin{equation}\label{eq:dic}
\vspace{6pt} \left \{
\begin{array}{rcl}
\dot{r} & = & r^{\mu}H(r,\vp) \\
\dot{\vp} & = & r^{\mu-1+\eta}G(r,\vp)
\end{array}
\right. \vspace{4pt}
\end{equation}
%, positive
where $\mu,\eta\in\Q_{>0}$ and $H,G$ are continuous on $\UU$ and
ramified-analytic at any point of $\UU\cap\CC$ and such that $H$ is
negative on $\UU$.

\smallskip\noindent
\em (b) Non-monodromic case: \em There exist $\mu\in\Q_{\geq 0}$,
u-a-r-a functions $G_1,G_2:\SSS^1\times]0,\eps]\to\RR$ so that $G_2$
vanishes on $\CC\setminus\Omega$ and an u-a-r-a
function $H$ continuous on the whole cylinder
$\SSS^1\times[0,\eps]$ such that the restricted function
$H|_{\CC}$ is not constant, in such a way that $\overline{\xi}$
writes in the open cylinder $\SSS^1\times]0,\eps]$ as
\vspace{4pt}
\begin{equation}\label{eq:nonmon}
\vspace{6pt} \left \{
\begin{array}{rcl}
\dot{r} & = & r^{\mu+1}G_1\\
\dot{\vp} & = &r^{\mu}[\frac{\partial H}{\partial \vp} + G_2].
\end{array}
\right.
\end{equation}

\vspace{4pt}\noindent
 Then any trajectory $\gamma$ of $\xi_0$ accumulating
to the origin is non-spiraling and therefore is non-oscillating.
\end{proposition}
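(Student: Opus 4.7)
The plan is to show, in each of the two cases, that the $\omega$-limit set $\omega(\overline{\gamma})$ of the lifted trajectory $\overline{\gamma}=(\beta\circ\Phi)^{-1}\circ\gamma$ on the cylinder is a proper subset of $\CC$ --- more precisely, that $\CC\setminus\omega(\overline{\gamma})$ contains a nonempty open arc. Once this is achieved, the criterion for non-spiraling stated just before the proposition yields that $\gamma$ does not spiral in $X$, and Proposition~\ref{pro:oscillation=spiraling} upgrades this to non-oscillation.

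\emph{Dicritical case.} Inside $\UU$, the condition $H<0$ forces $\dot r<0$, so $r$ strictly decreases along any trajectory arc lying in $\UU$ and may be used as independent variable; the resulting scalar equation
\begin{equation*}
\frac{d\vp}{dr}\;=\;\frac{r^{\eta-1}G}{H}
\end{equation*}
has the integrable kernel $r^{\eta-1}$ since $\eta>0$. Because $G/H$ is bounded on a compact subneighborhood of $p$, the total angular variation of any trajectory inside $\UU$ is uniformly bounded by a constant multiple of $r_0^\eta$. I would then shrink to a rectangle $\UU'=I'\times[0,r_0)\subset\UU$ with $I'$ an arc whose distance to $\partial I$ exceeds this uniform bound. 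Any trajectory entering $\UU'$ is confined to $\UU$ thereafter and, by continuous dependence on initial data, converges to a single point of $\UU\cap\CC$. Consequently either $\overline{\gamma}$ eventually enters $\UU'$ (so $\omega(\overline{\gamma})$ is a singleton), or $\overline{\gamma}$ stays away from $\UU'$ and $\omega(\overline{\gamma})\cap(\UU'\cap\CC)=\emptyset$; either way $\CC\setminus\omega(\overline{\gamma})$ contains an open arc.

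\emph{Non-monodromic case.} Since $H|_{\CC}$ is continuous on $\SSS^{1}$, not constant, and analytic on each of the finitely many arcs of $\CC\setminus\Omega$, neither of the open sets $\{\partial_\vp H>0\}$ and $\{\partial_\vp H<0\}$ inside $\CC\setminus\Omega$ can be empty (otherwise $H|_{\CC}$ would be continuous and monotone on the circle, hence constant). I would then pick a connected component $K=[a,b]\subset\CC$ of the max-locus $\{H|_{\CC}=\max H|_{\CC}\}$; $K$ is a proper closed sub-arc of $\CC$, possibly reduced to a point. Choose $\eps>0$ so that $a-\eps$ and $b+\eps$ both lie in $\CC\setminus\Omega$. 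Analyticity of $H$ on the arcs of $\CC\setminus\Omega$ adjacent to $a$ and $b$, combined with the strict inequality $H<M$ immediately outside $K$, produces for small enough $\eps$ the strict signs $\partial_\vp H(a-\eps,0)>0$ and $\partial_\vp H(b+\eps,0)<0$. Since $G_2$ vanishes at these regular points of $\CC$ by the u-a-r-a assumption, there exists $r_0>0$ such that the angular component $\dot\vp=r^\mu(\partial_\vp H+G_2)$ of $\overline{\xi}$ points strictly into the box $\mathcal{B}=(a-\eps,b+\eps)\times[0,r_0)$ along each of its two lateral walls.

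\emph{Trap argument and main obstacle.} Because $r(\overline{\gamma}(t))\to 0$, one can fix $T$ with $r(\overline{\gamma}(t))<r_0$ for all $t\geq T$. If $\overline{\gamma}$ enters $\mathcal{B}$ at some $t_1\geq T$, then for $t\geq t_1$ it stays in $\SSS^{1}\times[0,r_0)$ by the choice of $T$, and by the inward-pointing sign of $\dot\vp$ it cannot escape $\mathcal{B}$ through either lateral wall; hence $\overline{\gamma}(t)\in\mathcal{B}$ for all $t\geq t_1$ and $\omega(\overline{\gamma})\subset[a-\eps,b+\eps]\times\{0\}$, a proper closed sub-arc of $\CC$. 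If on the contrary $\overline{\gamma}$ never enters $\mathcal{B}$ after $T$, then $\omega(\overline{\gamma})$ is disjoint from the open arc $(a-\eps,b+\eps)\times\{0\}$. In both alternatives $\CC\setminus\omega(\overline{\gamma})$ contains a nonempty open arc, which together with the first paragraph concludes the proof. The main technical obstacle is the placement step in the non-monodromic case: the endpoints $a,b$ of the max-plateau may themselves belong to $\Omega$, and some arcs of $\CC\setminus\Omega$ may be plateaus on which $\partial_\vp H\equiv 0$, so choosing $K$ and $\eps$ correctly requires invoking both the piecewise analyticity of $H|_{\CC}$ on $\CC\setminus\Omega$ and the finiteness of $\Omega$ to guarantee that the two lateral walls can be placed at regular points of $\CC$ with the correct strict sign of $\partial_\vp H$.
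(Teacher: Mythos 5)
Your overall strategy matches the paper's: show that $\CC\setminus\omega(\overline\gamma)$ contains a nonempty open arc, invoke the non-spiraling criterion stated just before the proposition, then apply Proposition~\ref{pro:oscillation=spiraling}.

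\smallskip\noindent
\emph{Dicritical case.} Your argument is the paper's in substance, recast more uniformly. The paper splits on $\eta\geq 1$ (where division by $r^\mu$ makes $\overline\xi$ ramified-analytic and transverse to $\CC$) versus $\eta<1$ (where explicit barrier curves $r(\vp)=[r_0^\eta-\tfrac{\eta}{K_i}(\vp-\vp_0)]^{1/\eta}$ are used), whereas you simply integrate $d\vp/dr=r^{\eta-1}G/H$ and use integrability of $r^{\eta-1}$ near $r=0$ (valid for every $\eta>0$) to bound the total angular variation. Same content; your route avoids the case split. A small slip of phrasing: the convergence of $\vp(r)$ as $r\to 0^+$ follows from absolute integrability, not from ``continuous dependence on initial data.''

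\smallskip\noindent
\emph{Non-monodromic case.} Here you genuinely diverge from the paper. You build a geometric trapping box $\mathcal B$ around one component of the max-plateau of $H|_\CC$, using the sign of $\dot\vp$ along the two lateral walls. The paper instead runs a Lyapunov-type argument on the scalar function $h(t)=H(\overline\gamma(t))$: it sets $\Omega'=\Omega\cup(H|_\CC)^{-1}\big(H(\crit^*(H|_\CC))\big)$ and shows $\omega(\overline\gamma)\subset\Omega'$ because, near the fiber $(H|_\CC)^{-1}(a)$ of any value $a$ outside $H(\crit^*(H|_\CC))\cup H(\Omega)$, after dividing $\overline\xi$ by $r^\mu$ one has $\dot h=(r\partial_r H)G_1+(\partial_\vp H)^2+(\partial_\vp H)G_2\geq c>0$ (using Remark~\ref{rk:derivative-wrt-r} and $G_2|_{\CC\setminus\Omega}=0$), so $a$ cannot be an accumulation value of $h$. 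What the paper's route buys is that the exceptional set $\Omega$ is dealt with in one sweep; what yours buys is locality --- you only need the sign of $\partial_\vp H$ at two well-chosen points, and the trap yields the conclusion immediately. Both are legitimate.

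\smallskip\noindent
\emph{On the obstacle you flag.} It is real but surmountable without extra hypotheses. If the left endpoint $a$ of the plateau $K$ is not in $\Omega$, then by analytic continuation $K$ reduces to $\{a\}$, and an ordinary analytic interior maximum gives $\partial_\vp H>0$ just to the left of $a$ and $<0$ just to the right. If $a\in\Omega$, then on the arc of $\CC\setminus\Omega$ abutting $a$ from the left $H$ is analytic, $H<M$, and $H(\vp)\to M$ as $\vp\to a^-$; were $\partial_\vp H\leq 0$ on a whole left-neighborhood of $a$, the positive function $M-H$ would be nondecreasing there while tending to $0$, which is impossible. Hence $\partial_\vp H(a-\eps,0)>0$ for some small $\eps$ with $a-\eps\notin\Omega$, and continuity of $\partial_\vp H$ and $G_2$ at $(a-\eps,0)$ (with $G_2\to 0$) seals the lateral wall for $r_0$ small. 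The same works at $b+\eps$. So your trap closes and the proof is correct.
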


\begin{proof}
It suffices to show that any trajectory $\gamma$ of $\xi_0$
accumulating to the origin satisfies the non-spiraling criterion above.

\smallskip\noindent In  the dicritical situation (a) we prove a
slightly stronger result: there exists a non-empty arc
$I\subset\UU\cap\CC$ such that each point in $I$ is the unique
$\omega$-limit point of a trajectory of the transformed
vector field $\overline{\xi}$. \\
When $\mu-\eta+1\geq\mu$ in Equation (\ref{eq:dic}), dividing $\overline{\xi}$
by $r^\mu$, gives a vector field which extends to $\UU\cap\SSS^1\times[0,\eps]$
as a ramified-analytic vector field transverse to $\CC\cap\UU$. Thus any point
of $\CC\cap\UU$ is the unique accumulation point of a trajectory of $\overline{\xi}$
living in the open cylinder $\SSS^1\times]0,\eps]$.
\\
Assume now that $\mu-\eta+1<\mu$ in (\ref{eq:dic}). We suppose that $\UU$
is of the form $\UU=]\vp_1,\vp_2[\times[0,\delta]\in\SSS^1\times[0,\eps]$ for
some $\delta>0$ small enough. Dividing $\overline{\xi}$ by $r^{\mu-1}|H|$,
our vector field provides the following equations in
$\UU$:
\vspace{4pt}
\begin{equation}\label{eq:dic-bis}
 \left \{
\begin{array}{rcl}
\vspace{4pt}
\dot{r} & = & -r \\
\dot{\vp} & = & r^{\eta}\frac{G}{\mid H \mid}
\end{array}
\right.
\vspace{4pt}
\end{equation}
Up to shrinking $\UU$, and since $G$ is ramified-analytic,
we furthermore assume that $G$ does not
vanish on $\UU$, up to increasing the exponent $\eta$.
If $G(p)= 0$ but $G|_{\UU\cap\CC}\not\equiv 0$,
then there are points of $\CC$ close to $p$ at which $G$ does not vanish.
Thus we can also suppose $G(p)\neq 0$, for instance that $G$ is positive on $\UU$.
Up to shrinking $\UU$ again, we know that $K_1\leq \frac{G}{\mid H\mid}\leq K_2$
on $[\vp_1,\vp_2]\times[0,\delta]$ for some positive constants
$K_1,K_2$. The solution of (\ref{eq:dic-bis}) through a point
$(\vp_0,r_0)\in [\vp_1,\vp_2]\times ]0,\delta]$, as long as it is in
that domain, lies between the solutions through $(\vp_0,r_0)$ of the
systems of equations $\dot{r} = -r$ and $\dot{\vp} = K_ir^\mu$ for
$i=1,2$. These last curves are parameterized by
\begin{center}
\vspace{4pt} $ \vp\mapsto
r(\vp)=[r_0^\eta-\frac{\eta}{K_i}(\vp-\vp_0)]^{1/\eta}, \;i=1,2.$
\vspace{4pt}
\end{center}
We deduce that any point of $]\vp_1,\vp_2[\times 0\subset\CC$ is the unique accumulation
point of a trajectory of the system (\ref{eq:dic-bis}), lying in $\{r>0\}$.

\medskip \noindent Consider now the non-monodromic situation (b).
The hypothesis about $H$ implies its partial derivative $\partial_\vp H$ is u-a-r-a and continuous
along $\CC\setminus\Omega$. Let $\crit^*(H|_\CC)$ be the critical
locus of $H|_\CC$ in $\CC\setminus\Omega$, and let
\begin{center}
\vspace{4pt}
$\Omega'=\Omega\cup (H|_\CC)^{-1}(H(\crit^*(H|_\CC))$.
\vspace{4pt}
\end{center}
Since $H|_\CC$ is not constant, $\CC\setminus\Omega'$ has non empty
interior. To show the criterion for non-spiraling for any trajectory
$\gamma$ of $\xi_0$, it is enough to check that the limit set
$\omega(\overline{\gamma})$ of any trajectory $\overline{\gamma}$ of
$\overline{\xi}$ accumulating to $\CC$ is
contained in $\Omega'$.
\\
Assume $\overline{\gamma}$ is parameterized by $t\in\R_{\geq 0}$ and
consider the real function
\begin{center}
\vspace{4pt}
$h=h_{\overline{\gamma}}:\R_{\geq 0}\to\R,\;\;t\mapsto
h(t)=H(\overline{\gamma}(t)).$
\vspace{4pt}
\end{center}
The function $h$ is $C^1$. Let $p\in\CC\setminus\Omega'$ and let
$a:=H(p)$. We just have to show that $a$ cannot be an accumulation
value of $h$ when $t\to +\infty$. The function $|\partial_\vp H|$ is
bounded below on the compact set $(H|_\CC)^{-1}(a)$: there exists
$c>0$ such that $|\partial_\vp H |\geq 2c>0$ on a given
neighborhood $\VV$ of $(H|_\CC)^{-1}(a)$ in $\SSS^1\times[0,\eps]$.
Since $\overline{\gamma}(t)=(\vp(t),r(t))$ satisfies Equations
(\ref{eq:nonmon}),  if $\overline{\gamma}(t)\in\VV$ then, up to shrinking $\VV$
(taking into account Remark~\ref{rk:derivative-wrt-r}), we find
\begin{center}
\vspace{4pt} $\dot{h}(t)= \displaystyle{\frac{\partial H}{\partial
r}(\overline{\gamma}(t))\,\dot{r}(t)+\frac{\partial H}{\partial
\vp}(\overline{\gamma}(t))\,\dot{\vp}(t)} >c.$ \vspace{4pt}
\end{center}
For $\eps'>0$ sufficiently small, we assume that
$H^{-1}(a)\cap\SSS^1\times[0,\eps']\subset \VV$. Thus, there exists
some $\delta>0$ such that
\begin{center}
\vspace{4pt}
$h(t)\in]a-\delta,a+\delta[\Rightarrow \overline{\gamma}(t)\in \VV$.
\vspace{4pt}
\end{center}
From all these properties, for $t\in h^{-1}(]a-\delta,a+\delta[)$ large enough,
$\dot{h}(t)\geq c/2>0$. Thus when $t\to+\infty$, the value $a$ cannot be an
accumulation value of $h$.
\end{proof}
\begin{remark}\label{rk:criterion}{\em
The non-monodromic situation (b) described in
Proposition~\ref{pro:dic-or-nonmon} can be generalized as follows:\\
Assume that the given analytic vector field $\xi_0$ does not vanish
in $S_0$ and that the foliation $\overline{\mathcal{F}}$ on $\SSS^1\times]0,\eps]$
induced by $\overline{\xi}=(\beta\circ\Phi)^\ast\xi_0$ extends continuously
to $\CC\setminus\Omega$ such that $\CC$ is invariant. Assume there exist,
two distinct points $q_1,q_2$ of $\CC\setminus\Omega$ where
$\overline{\mathcal{F}}$ is not singular, and two continuous germs of
vector fields $\xi_1$ at $q_1$ and $\xi_2$ at $q_2$,
which are local generators of the foliation
$\overline{\mathcal{F}}$, positively co-linear to $\overline{\xi}$
in the common domain of definition and "pointing in different
directions": if $\vp$ denotes a global coordinate on
$\CC\simeq\SSS^1$, writing $\xi_i (q_i) = c_i\partial_{\vp}$, then
$c_1 c_2<0$. Then any trajectory of $\xi_0$ accumulating to the
origin is non-oscillating.}
\end{remark}
\section{Proof of the main result}\label{section:ProofMainResult}
This section is devoted to the proof of the main result of this
paper, Theorem~\ref{thm:MainResult}.

The next Lemma shows that the only case requiring work is when both
$f_0|_{S_0}$ and $\nbh f_0$ do not vanish on $S_0$.

\begin{lemma}\label{lm:vanishing-case}
If either $f_0|_{S_0}$ or $\nbh f_0$ vanishes in
any neighborhood of $\oo$ in $S_0$, then any trajectory of the
restricted gradient $\nbh f_0$ accumulating to the origin is
non-oscillating.
\end{lemma}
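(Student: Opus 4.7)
My plan is to exhibit, under either hypothesis, an analytic half-branch at $\oo$ in $S_0$ avoided by the trajectory, and then conclude non-oscillation by the Rolle-leaf argument used in the proof of Proposition~\ref{pro:oscillation=spiraling}(a).

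First I would dispose of the degenerate situation: if $\nbh f_0$, or equivalently $f_0|_{S_0}$, vanishes identically on some nonempty open subset of $S_0$, then by the identity principle on the connected real analytic manifold $S_0$ one has $\nbh f_0\equiv 0$ on the whole of $S_0$, so no trajectory accumulates at $\oo$ and the conclusion is vacuous. Henceforth I assume $\nbh f_0$ does not vanish identically on any open subset of $S_0$, and let $\gamma$ be a non-trivial trajectory of $\nbh f_0$ accumulating at $\oo$; along $|\gamma|$, then, $\nbh f_0\neq 0$ and $f_0\circ\gamma$ is strictly monotonic with limit $f_0(\oo)=0$.

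The next step is to extract in each case an analytic half-branch at $\oo$ in $S_0$ disjoint from $|\gamma|$. In the case where the zero set $Z$ of $f_0|_{S_0}$ accumulates at $\oo$, $Z$ is semi-analytic in $\R^n$ with $\oo\in\clos(Z)$, so the Curve Selection Lemma furnishes an analytic half-branch $\Gamma\subset Z$ at $\oo$; strict monotonicity of $f_0\circ\gamma$ towards $0$ forces $f_0\neq 0$ along $|\gamma|$, whence $|\gamma|\cap\Gamma=\emptyset$ near $\oo$. In the case where the critical set $C$ of $\nbh f_0$ accumulates at $\oo$, local finiteness of semi-analytic sets together with the local conic structure (see Section~\ref{section:ParameterizationSurfaces}) give, for a small enough cylindrical neighborhood $V\simeq\SSS^1\times(0,\delta]$ of $\oo$ in $S_0$, that $C\cap V$ is a finite disjoint union $\Gamma_1\cup\cdots\cup\Gamma_k$ of analytic half-branches at $\oo$; since $\gamma$ is non-trivial, $|\gamma|\cap C=\emptyset$, so $|\gamma|$ avoids each $\Gamma_i$.

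To conclude, in both cases $|\gamma|$ lies in a wedge $W$ of the cylinder obtained by removing from $V$ a finite disjoint union of analytic half-branches issuing from the bottom circle; this wedge is simply connected, and $\nbh f_0|_W$ is analytic and non-vanishing (possibly after shrinking $V$ in the first case). Haefliger's theorem, applied exactly as in the proof of Proposition~\ref{pro:oscillation=spiraling}(a), then shows that $|\gamma|$ is a Rolle leaf of the real analytic foliation induced by $\nbh f_0$ on $W$, so that $|\gamma|$ meets any analytic arc in $W$ in only finitely many points; a standard stratification of a semi-analytic $H\subset\R^n$ into analytic pieces yields that $|\gamma|\cap H$ has only finitely many connected components unless $|\gamma|\subset H$, which is precisely non-oscillation. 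The main technical obstacle I anticipate is the geometric verification, via local finiteness of semi-analytic sets and the parameterization of Proposition~\ref{pro:parameterization-of-surfaces}, that in the critical-set case $C\cap V$ really is a finite disjoint union of analytic half-branches at $\oo$ and that their complement in the cylinder decomposes into simply connected wedges; once this is established, the Rolle-leaf conclusion transports directly from Proposition~\ref{pro:oscillation=spiraling}(a).
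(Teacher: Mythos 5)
Your proposal is correct, but it takes a different route in the second case. Your treatment of the first hypothesis (the zero set of $f_0|_{S_0}$ accumulates at $\oo$) coincides with the paper's: curve selection gives $\Gamma\subset f_0^{-1}(0)\cap S_0$ through $\oo$, and strict monotonicity of $f_0$ along a non-trivial gradient trajectory forces $|\gamma|\cap\Gamma=\emptyset$. The paper then simply invokes Proposition~\ref{pro:oscillation=spiraling}, whereas you unroll its proof (Haefliger/Rolle); these are the same thing.

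Where you diverge is the second hypothesis. You analyze the critical set $C=\{\nabla_\bh f_0=0\}\cap S_0$ directly, claiming it is a finite union of analytic half-branches at $\oo$ and arguing that $|\gamma|$ lies in a simply connected component of the complement. This works, but it requires the structural claim you yourself flag as the main technical obstacle. The paper sidesteps this entirely with a short observation that unifies both cases: on any connected component of the zero locus of $\nabla_\bh f_0$ containing $\oo$ in its closure, $f_0$ is locally constant, hence identically zero (because $f_0(\oo)=0$ and the component is connected). Thus the second hypothesis implies the first, and one never needs to understand the geometry of $C$ at all — one half-branch $\Gamma$ inside $f_0^{-1}(0)$ suffices, exactly as in Case 1. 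You would do well to add this reduction; it eliminates the structural verification and shortens the proof considerably.

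A minor point: in your degenerate-case dismissal, the phrase ``$\nabla_\bh f_0$, or equivalently $f_0|_{S_0}$, vanishes identically on some nonempty open subset'' is not an equivalence. If $f_0|_{S_0}\equiv 0$ on an open set then $\nabla_\bh f_0\equiv 0$ there, but the converse only gives $f_0$ locally constant, not zero. The direction you actually use (vanishing of $\nabla_\bh f_0$ on an open set propagates by analyticity to all of $S_0$) is fine, so the conclusion stands, but the wording should be corrected.
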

\begin{proof}
First, note that $\nbh f_0$ extends to a continuous subanalytic mapping $\nbh
f_0:\clos(S_0) \to T\R^n$ by $\nbh f_0(\oo)=\oo$, and that $f_0$ vanishes
on any connected component of the zero locus of $\nbh f_0$ containing $\oo$ in
its closure. The subanalytic Curve Selection Lemma
guarantees there exists a subanalytic (thus semi-analytic) curve $\Gamma\subset S_0$
such that $\oo\in\clos(\Gamma)$ and $\Gamma\subset f^{-1}_0(0)$. Let $\gamma$
be a non-trivial trajectory of the restricted gradient $\nbh f_0$
accumulating to $\oo$. The function $t\mapsto f_0(\gamma(t))$ is
increasing and tends to $0$ as $t\to\infty$. Thus $f_0(\gamma(t))<0$
for any $t$ and $\gamma$ does not cut $\Gamma$. Apply now
Proposition~\ref{pro:oscillation=spiraling}.
\end{proof}

Assume from now on that there exists a neighborhood $\VV$ of $\oo$
in $X$, such that $f_0|_{S_0}$ and $\nbh f_0$ do not vanish
in $\VV\cap S_0$.

\smallskip
The sketch of the proof of Theorem~\ref{thm:MainResult} is as
follows. We first open the surface $S_0$ by means of a suitable
opening blow-up mapping $\beta:M\times\R_{\geq 0}\to\R^n$ as defined in
Proposition~\ref{pro:opening-blow-up}. Then we take a suitable resolution
$\RR = (\wt{S},\wt{E},\sigma)$ of the surface $S=\beta^{-1}(S_0)$ as
in Theorem~\ref{th:reduction-singularities}. We then pick a
parametrization $\wt{\Phi}: \SSS^1\times[0,\eps] \to \wt{S}$
associated to $\RR$. Writing $\Phi=\sigma\circ\wt{\Phi}$, the
mapping $\beta\circ\Phi$ is a diffeomorphism from the open cylinder
$\SSS^1\times]0,\eps]$ onto $S_0$. Thus, the pull-back $\wt{\bh} := (\beta \circ\Phi)^*\bh$
of the metric $\bh$ is an analytic Riemannian metric on the open cylinder.
If $f^\Phi$ denotes the composition $f_0\circ\beta\circ\Phi$, then
the pull-back $\bar{\xi}:= (\beta\circ\Phi)^*\nabla_{\bh}f_0$
 is just the gradient vector field of $f^\Phi$ with respect to $\wt{\bh}$, that is,
\begin{center}
\vspace{3pt}
$
\bar{\xi}=(\beta\circ\Phi)^\ast\nabla_\bh
f_0=\nabla_{\wt{\bh}}f^\Phi.
$
\vspace{3pt}
\end{center}
The proof will be finished, using
Proposition~\ref{pro:dic-or-nonmon}, once we have proved that
$\overline{\xi}$ satisfies one of the two situations described there:
either (a), dicritical or (b), non-monodromic.

\smallskip
Our proof will only deal with the metric $\bg$ on $\R^n$ be the Euclidean
metric. We can reduce to this case using Cartan-Janet's Theorem \cite{Jan,Car}:
an analytic Riemannian manifold can be locally isometrically embedded into an
Euclidean space as an analytic submanifold equipped with the induced
Riemannian structure.

\medskip\noindent
{\bf Notation.} Let $\Omega$ be a finite subset of $\CC$ (such as
for instance the exceptional set of a parameterization as in
Proposition~\ref{pro:parameterization-of-surfaces}). In
Definition~\ref{def:uara} was introduced the notion $\Omega$-u-a-r-a
function on the open cylinder $\SSS^1\times]0,\eps]$.
For any rational number $\nu\geq 0$, let $\AAA_{\geq\nu}$ be the
real algebra of all the $\Omega$-u-a-r-a functions
$\psi:\SSS^1\times]0,\eps]\to \R$ for which the function
$r^{-\nu}\psi$ is also an $\Omega$-u-a-r-a function along $\CC$. Let
$\AAA_{>\nu}:= \cap_{\mu>\nu}\AAA_{\geq\mu}$ be the ideal of
$\AAA_{\geq\nu}$ of the functions $\psi$ such that the function
$r^{-\nu}\psi$ vanishes identically on $\CC\setminus\Omega$. In
particular $\psi \in \AAA_{>\nu}$ means there exists a rational
number $\nu^\prime > \nu$ such that  $\psi \in
\AAA_{\geq\nu^\prime}$.

\medskip
We are dealing first with the CTC case in rather detailed
fashion. It requires much more work than the OTC case, and this
latter will follow from exactly the same arguments as those used in
the CTC case.

\medskip
\noindent{\bf Cuspidal case.} \\
Assume that the tangent cone of $S_0$ at the origin is
reduced to a single point. \\
Take linear coordinates $(x_1,\ldots,x_{n-1},z)$ at $\oo$ adapted to
$S_0$ which are also orthonormal coordinates for the Euclidean
metric.

\smallskip\noindent
We consider an opening blow-up mapping of the form
$$
\beta:M\times\R_{\geq 0} \to \R^n,\;\;(\yy,z) \to
(z^{eN}\yy+\theta(z),z^N),
$$
where $M = \R^{n-1}$ and $e,N,\theta$ are defined as in
Proposition~\ref{pro:opening-blow-up}. We recall that
$z\mapsto(\theta(z),z)=(\theta_1(z),\ldots,\theta_{n-1}(z),z^N)$ is
a parametrization of an analytic half-branch in $S_0$. Let $m+1\in\N_{\geq 1}$
be the minimum order at $0$ with respect to $z$ of the components $\theta_j$.
The cuspidal nature of the surface $S_0$ implies
\begin{center}
\vspace{4pt}
$m\geq N$.
\vspace{4pt}
\end{center}

We define the following functions on the open cylinder
\begin{center}
\vspace{4pt} $R=eN(y_1^2+\cdots+y_{n-1}^2)\circ\Phi\;$ and
$\;U=\sum_j ((y_j\circ\Phi)_\vp)^2 $, \vspace{4pt}
\end{center}
where the subscript $\vp$ stands for partial derivative with respect
to the angular variable $\vp$. Again $R$ and $U$ depend on the
resolution and on the associated parameterization $\Phi$ considered,
but both are u-a-r-a functions with respect to the resulting
exceptional set $\Omega$.
\begin{lemma}\label{lm:U}
There is a non-empty open arc $J$ of $\CC\setminus\Omega$ with non
empty interior in $\CC\setminus\Omega$ along which the restricted
function $U|_J$ is positive.
\end{lemma}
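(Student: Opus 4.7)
The plan is to argue by contradiction: if $U\equiv 0$ on $\CC\setminus\Omega$, I would derive that the image $\Phi(\CC)$ reduces to a single point, contradicting the one-dimensionality of $E=\Phi(\CC)$.

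More concretely, since $U=\sum_{j=1}^{n-1}((y_j\circ\Phi)_\vp)^2$ is a sum of non-negative squares, the vanishing of $U$ on $\CC\setminus\Omega$ forces $(y_j\circ\Phi)_\vp(\vp,0)=0$ for every $j$ and every $\vp\in\CC\setminus\Omega$. Thus, for each $j$, the continuous function $\vp\mapsto(y_j\circ\Phi)(\vp,0)$ has vanishing angular derivative on $\CC\setminus\Omega$, and is therefore locally constant on this complement of a finite set (using that $\Phi$ is $\Omega$-u-a-r-a, so the restriction to $\CC\setminus\Omega$ is differentiable in $\vp$).

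Next I would invoke the continuity of $\Phi$ on the whole closed cylinder, given by Proposition~\ref{pro:parameterization-of-surfaces}, to propagate the constancy across the finitely many points of $\Omega$: at each $p\in\Omega$ the constants on the two arcs meeting at $p$ must both equal the value of $y_j\circ\Phi$ at $p$. Going once around the circle $\CC$, all these local constants must coincide, so each $y_j\circ\Phi$ is globally constant on $\CC$. Hence $\Phi(\CC)$ is a single point of $M\times\{0\}$. But the paragraph right after Proposition~\ref{pro:parameterization-of-surfaces} asserts $\Phi(\CC)=E$, and by Proposition~\ref{pro:opening-blow-up} this set $E$ is a connected subanalytic curve of $D$ of dimension one, which cannot be a single point. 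This contradiction shows $U|_{\CC\setminus\Omega}\not\equiv 0$.

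Having established that $U$ does not vanish identically on $\CC\setminus\Omega$, I would finish by noting that $U$ is continuous and non-negative on $\CC\setminus\Omega$ (being a sum of squares of continuous functions, by the $\Omega$-u-a-r-a property of $\Phi$). So the subset $\{\vp\in\CC\setminus\Omega : U(\vp,0)>0\}$ is open and non-empty, and therefore contains the desired non-empty open arc $J$. The only step where genuine care is needed is the propagation of constancy across $\Omega$, but this is elementary once the continuity of $\Phi$ on the whole circle $\CC$ is invoked; everything else is formal.
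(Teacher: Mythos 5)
Your proof is correct and follows essentially the same route as the paper: assume $U\equiv 0$ on $\CC\setminus\Omega$, deduce each $y_j\circ\Phi$ is locally constant there, propagate constancy across the finite set $\Omega$ by continuity of $\Phi$ on all of $\CC$, and contradict the fact that $E=\Phi(\CC)$ is one-dimensional (Proposition~\ref{pro:opening-blow-up}). You merely spell out two steps the paper leaves implicit, namely the propagation of constancy across $\Omega$ and the final passage from non-identically-vanishing to the existence of an open arc where $U>0$.
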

\begin{proof}
We just have to show that the function $U$ does not vanish on
the whole of $\CC\setminus\Omega$.
If $U|_{\CC\setminus\Omega}\equiv 0$, by definition of $U$, each $y_j\circ\Phi$
is locally constant when restricted to $\CC\setminus\Omega$, and thus constant on
$\CC$ by continuity. Using (iii) of
Proposition~\ref{pro:parameterization-of-surfaces}, this would imply
the constancy of the coordinates $y_j$ along $E=\sigma(\wt{\Phi}(\CC))$, which is
impossible since by construction $\dim E = 1$.
\end{proof}

Using the coordinates $(\vp,r)$ in the open cylinder, the metric $\wt{\bh}$ writes
\vspace{4pt}
\begin{equation}\label{eq:metric-h}
\wt{\bh}=(\beta\circ\Phi)^\ast{\bf
g}=A(r,\vp)dr^2+2B(r,\vp)drd\vp+C(r,\vp)d\vp^2.
\end{equation}

\smallskip
The following lemma describes the coefficients of
the metric $\wt{\bh}$. The dominant part of each term of interest
is explicit. It is important to remark that the
statement neither deals with a fixed resolution $\RR$, nor an associated
parametrization. It is very useful and necessary to obtain the needed
conclusions up to dominating resolutions of a given one as we will see.
\begin{lemma}\label{lm:expression_transf_metric}
There exists a resolution $\RR^0$ of $S$ such that, for any other
resolution $\RR\succeq\RR^0$ and parameterization $\Phi$ associated
to $\RR$, we have the following description: There exist
$s\in\Q_{\geq 0}\cup\{+\infty\}$ with \vspace{4pt}
\begin{equation}\label{eq:estimates_s}
s\geq eN +m, \vspace{4pt}
\end{equation}
an analytic power series $\psi(r)$ with $\psi(0)=0$ and an u-a-r-a
function $H$ on $\SSS^1\times]0,\eps]$ which extends continuously to
the circle $\CC$ in such a way that $H|_C$ is not constant, such
that we obtain the following expressions for the coefficients of
$\wt{\bh}$ in (\ref{eq:metric-h}):
\begin{equation}\label{eq:coefficients_metric}
\left\{
\begin{array}{rcl}
\vspace{4pt} A & = &r^{2N-2}[N^2+\psi(r)]
+r^{eN+m-1}\overline{A_1}+ r^{2eN-2}\overline{A_2}\\
\vspace{4pt}
B & = & r^sH_\vp + r^{2eN-1}R_\vp + \overline{B} \\
\vspace{4pt} C & = & r^{2eN}U
\end{array}
\right.
\end{equation}
where $\overline{A}_1,\overline{A}_2\in\AAA_{\geq 0}$ and
$\overline{B}\in\AAA_{>2eN-1}$ and with the convention that the term
$r^sH_\vp \equiv 0$ if $s=\infty$. Moreover, $s$ and $\psi(r)$ depend
neither on $\RR\succeq\RR^0$ nor on the parameterization $\Phi$.
\end{lemma}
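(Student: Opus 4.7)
The proof is an explicit (and somewhat lengthy) computation of the pulled-back metric $\wt\bh = (\beta\circ\Phi)^*\bg$ followed by a careful organization of each coefficient into the claimed asymptotic form. First I would record $\beta^*\bg$ in the coordinates $(\yy,z)$ on $M\times\R$: differentiating the components $\beta_j = z^{eN}y_j + \theta_j(z)$ for $j<n$ and $\beta_n = z^N$ gives $dx_j = z^{eN}dy_j + (eN z^{eN-1}y_j + \theta_j'(z))dz$ and $dx_n = Nz^{N-1}dz$, and squaring and summing produces an explicit polynomial expression in the $y_j$, $z$ and $\theta_j'(z)$. Pulling back further via $\Phi$, and using that $z\circ\Phi = r$ and each $y_j\circ\Phi$ is $\Omega$-u-a-r-a by Proposition \ref{pro:parameterization-of-surfaces}, we substitute $dz = dr$ and $dy_j = (y_j)_\vp\,d\vp + (y_j)_r\,dr$ and read off $A,B,C$ as functions on the open cylinder.

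The coefficient $C$ is immediate: only the $z^{2eN}dy_j^2$ contributions reach $d\vp^2$, so $C = r^{2eN}\sum_j(y_j)_\vp^2 = r^{2eN}U$. For $A$ I would group the four sources of $dr^2$-terms. The leading piece $N^2 r^{2N-2}$ comes from $dx_n^2$; the purely $\theta$-part $\sum_j\theta_j'(r)^2$ is an analytic function of $r$ alone of order $\geq 2m$, and since $m\geq N$ this rewrites as $r^{2N-2}\psi(r)$ with $\psi$ analytic and $\psi(0)=0$; the mixed $\theta$--$\yy$ terms, after factoring $\theta_j'(r) = r^m\tilde\theta_j(r)$ with $\tilde\theta_j$ analytic, all carry $r^{eN+m-1}$ and assemble into $r^{eN+m-1}\overline A_1$; the purely $\yy$-terms ($e^2N^2r^{2eN-2}\sum_j y_j^2$, $r^{2eN}\sum_j(y_j)_r^2$, and $2eNr^{2eN-1}\sum_j y_j(y_j)_r$) each carry at least $r^{2eN-2}$ and give $r^{2eN-2}\overline A_2$ with $\overline A_2\in\AAA_{\geq 0}$. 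The key technical point here is that although $(y_j)_r$ can blow up like $r^{1/l-1}$ at points of $\CC\setminus\Omega$, the function $r(y_j)_r$ is $\Omega$-u-a-r-a (write $y_j\circ\Phi$ as a convergent $r^{1/l}$-series, or use Remark \ref{rk:derivative-wrt-r}), and this compensating factor $r$ is present in every term where $(y_j)_r$ appears.

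For $B$ the expansion splits as $2B = 2r^{2eN}\sum_j(y_j)_r(y_j)_\vp + 2r^{eN}\sum_j(eN r^{eN-1}y_j + \theta_j'(r))(y_j)_\vp$. Using $R_\vp = 2eN\sum_j y_j(y_j)_\vp$ the middle piece yields the $r^{2eN-1}R_\vp$ term of the statement; the first piece equals $2r^{2eN-1}\sum_j (r(y_j)_r)(y_j)_\vp$ and belongs to $\AAA_{>2eN-1}$ since $r(y_j)_r$ vanishes on $\CC$ by Remark \ref{rk:derivative-wrt-r}. The remaining contribution is $2r^{eN}\sum_j\theta_j'(r)(y_j)_\vp = 2r^{eN}\tilde H_\vp$ where $\tilde H(\vp,r) = \sum_j\theta_j'(r)\,y_j\circ\Phi(\vp,r)$, and I would write $\tilde H = r^m H^{(m)}$ so that $H^{(m)}$ is $\Omega$-u-a-r-a. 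The restriction $H^{(m)}|_\CC$ is the linear form $(m+1)\sum_j c_{j,m+1}(y_j\circ\Phi)|_\CC$, where $c_{j,k}$ are the Taylor coefficients of $\theta_j$. If this is non-constant in $\vp$ take $s = eN + m$ and $H = H^{(m)}$; otherwise iterate, letting $s > eN+m$ be the first rational exponent in the $r^{1/l}$-expansion of $H^{(m)}$ at which the coefficient ceases to be constant along $\CC\setminus\Omega$, and set $s = +\infty$, $H\equiv 0$ if no such exponent exists. The crucial observation is that constancy along $\Phi(\CC) = E'$ of a linear form in the $y_j$ is equivalent to constancy along $\sigma(E') = E\subset D$, an intrinsic property of the subanalytic curve $E$; hence $s$ depends only on $\theta$, $N$ and $E$, and $\psi$ only on $\theta$ and $N$, both independent of $\RR\succeq\RR^0$ and of the parameterization.

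For the initial resolution $\RR^0$ I would take an $(S,\mathcal H)$-resolution in the sense of Definition \ref{def:(S,H)-resolution}, where $\mathcal H$ consists of $z$ together with the finitely many linear forms in the $y_j$ drawn from the Taylor coefficients of $\theta$ up to the order needed to determine $s$; this guarantees that $H$, $\overline A_1$, $\overline A_2$ and $\overline B$ are $\Omega$-u-a-r-a with the prescribed valuations for every $\RR\succeq\RR^0$. The principal obstacle is not any single algebraic identity but a delicate bookkeeping task: one must verify that the singular derivatives $(y_j)_r$ only ever appear multiplied by a compensating power of $r$, and that the remainder $\overline B$ truly lies in $\AAA_{>2eN-1}$ rather than merely in $\AAA_{\geq 2eN-1}$. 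Equally subtle is the intrinsicness claim, which demands carefully separating the parameterization-dependent data (the $y_j\circ\Phi$ themselves) from the geometric data of the curve $E$, on which the constancy of the relevant linear forms really depends.
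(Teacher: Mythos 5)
Your computation of the pull-back metric and the organization of the coefficients $A$ and $C$, the grouping of the $\theta$-contributions using $\theta_j'(r)=r^m\lambda_j(r)$, and the use of Remark~\ref{rk:derivative-wrt-r} to tame the singular derivatives $(w_j)_r$ all match the paper's treatment. The point where your proposal diverges — and where it is incomplete — is the handling of the critical term $r^sH_\vp$ in the coefficient $B$. The paper observes that the first summand of the $\rd r\,\rd\vp$ coefficient is exactly $(h^\Phi)_\vp$ for the analytic function $h(\yy,z)=z^{eN+m}\sum_j\lambda_j(z)y_j$ on a neighborhood of $E$ in $M\times\R$, and then applies Proposition~\ref{prop:(z,S)-relative-principal-expansion} to $h$. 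That single application produces, in one stroke, the dominating resolution $\RR^0$, the $\Q$-generalized polynomial $P(r)$ that is absorbed into the remainder, the function $H$ defined on the whole cylinder and continuous up to $\CC$ with $H|_\CC$ non-constant, and the independence of $s$ (and $P$) from the choice of $\RR\succeq\RR^0$ and of $\Phi$. What you propose instead is a hand-rolled iteration: write $\tilde H = r^m H^{(m)}$, look at $H^{(m)}|_\CC$, and if it is constant, chase down the first exponent in the $r^{1/l}$-expansion at which the coefficient becomes non-constant along $\CC\setminus\Omega$. This is precisely the content of Proposition~\ref{prop:(z,S)-relative-principal-expansion}, re-derived in a special case, and the re-derivation as stated glosses over the points that proposition exists to handle.

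Concretely, three things are missing or circular in your sketch. First, your iteration yields (at best) a function on $\CC\setminus\Omega$, the ``first non-constant coefficient''; but the lemma requires an u-a-r-a function $H$ on the whole cylinder $\SSS^1\times]0,\eps]$ extending continuously to $\CC$. Passing from the coefficient to a cylinder function means subtracting the lower-order, $\vp$-independent part $P(r)$ and dividing by $r^s$, and showing the quotient extends continuously across $\CC\setminus\Omega$ — which is exactly what the proof of Lemma~\ref{lm:principal-parts}(ii) does via the $(S,z)$-adapted resolution and the monomial local forms of the transforms. Second, your intrinsicness argument (``constancy along $\Phi(\CC)=E'$ is equivalent to constancy along $E$'') only addresses the first iteration step for the leading linear form $\sum_j\lambda_j(0)y_j$; after subtracting lower-order pieces, one has to know that the subtracted pieces are genuine Puiseux series in $z$ alone and that the residual function still comes from a ramified-analytic function downstairs on a neighborhood of $E$, so that the next exponent is again intrinsic. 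This is again what Proposition~\ref{prop:(z,S)-relative-principal-expansion} (see also Remark~\ref{rm:about-F}) guarantees, and you would need to reproduce its bookkeeping. Third, your choice of $\RR^0$ as an $(S,\mathcal H)$-resolution where $\mathcal H$ contains ``the Taylor coefficients of $\theta$ up to the order needed to determine $s$'' is circular: you need the resolution to determine $s$, and $s$ to choose $\mathcal H$. The paper avoids this by letting the single application of Proposition~\ref{prop:(z,S)-relative-principal-expansion} to $h$ produce $\RR^0$, $s$, $P$ and $H$ simultaneously. In short, the calculation is right, but the key step is to recognize the relevant piece of $B$ as $(h^\Phi)_\vp$ and hand it to Proposition~\ref{prop:(z,S)-relative-principal-expansion}, rather than redoing the proposition's iteration on the fly.
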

\begin{proof}
We start with a given resolution $\RR^1$ of the surface $S$ and
adopt the notations above. Let $\ww= \yy \circ
\Phi=(w_1,\ldots,w_{n-1})$ and $\theta(z)=z^{m+1}\oth(z)$ with $\oth(0)\neq {\bf 0}$.
Let $\lambda(r) := (1+m)\oth(r)+
r\oth'(r)=r^{-m}\theta'(r)$ where the prime denotes the usual
derivative. It is an analytic mapping which does not vanish at
$r=0$. From the expression of $\beta$, we deduce
\begin{center}%\label{eq:pull-back-of-dz-dxi}
\vspace{4pt}
$
\begin{array}{rll}
\vspace{4pt}
(\beta\circ\Phi)^*{\rd x_n} & = & Nr^{N-1}\rd r \\
\vspace{4pt}
(\beta\circ\Phi)^*{\rd x_j} & = & [r^{eN-1} (eNw_j+r(w_j)_r) +r^m \lambda_j] \rd r +
r^{eN}(w_j)_\vp\rd \vp.\\
\end{array}
$
\vspace{4pt}
\end{center}
Note that each $w_j$ is u-a-r-a and extends continuously on the
whole bottom circle $\CC$. But $(w_j)_r$ could even be unbounded.
However, by Remark \ref{rk:derivative-wrt-r}, each function
$r(w_j)_r$ is u-a-r-a and belongs to $\AAA_{>0}$. Taking this
property into account and since $\bg$ is the Euclidean metric,
we obtain
\vspace{4pt}
\begin{equation}\label{eq:coeffients-metric-bis}
\begin{array}{rll}
\vspace{4pt}
\wt{\bh} & = & (\beta\circ\Phi)^* (\rd x_1^2+\cdots+\rd x_n^2) \\
\vspace{4pt}
 & = & [N^2r^{2N-2} \; + \; r^{2m}\sum_j\lambda_j^2 \; + \;
 r^{eN+m-1}(\sum_j eN\lambda_jw_j \; +\cdots) \\
\vspace{4pt}
&  &  + \; r^{2eN-2}(\sum_je^2N^2w_j^2\; +\; \cdots)] \rd r^2 \; + \\
\vspace{4pt}
&  &  2[r^{eN+m}\sum_j\lambda_j(w_j)_\vp +  r^{2eN-1}(\sum_j eNw_j(w_j)_\vp  +  \cdots)] \rd r \rd\vp  \\
\vspace{4pt}
&  &  + \;[r^{2eN}\sum_j(w_j)_\vp^2] \rd\vp^2,
\end{array}
\end{equation}
where $\cdots$ stands for an element of $\AAA_{>0}$. Let $\psi(r) :=
r^{2(m-N)+2}\sum_j\lambda_j^2(r)$. Since $m\geq N$ and $e>1$ we can
define $\overline{A}_1,\overline{A}_2\in\AAA_{\geq 0}$ so that $A$,
the coefficient of $\rd r^2$ in (\ref{eq:coeffients-metric-bis})
writes as in (\ref{eq:coefficients_metric}). Notice that $\psi(r)$
does not depend on the resolution or the parameterization. \\
On the
other hand, the second summand of the coefficient of $\rd r\rd \vp$
in (\ref{eq:coeffients-metric-bis}) is given by
$r^{2eN-1}R_\vp+\overline{B}$ where $\overline{B}\in\AAA_{>2eN-1}$,
while the coefficient of $\rd\vp^2$ in
(\ref{eq:coeffients-metric-bis}) is just $r^{2eN}U$. \\
In order to complete the expressions of (\ref{eq:coefficients_metric}), let us
have a look at the first summand of the coefficient of $\rd r\rd
\vp$ in (\ref{eq:coeffients-metric-bis}). Consider the function
$$
h(\yy,z)=z^{eN+m}\sum_j\lambda_j(z)y_j,
$$
defined and analytic in a neighborhood of $E$ in $M\times\R$.
Applying Proposition~\ref{prop:(z,S)-relative-principal-expansion}
to $h$, there exists a resolution $\RR^0$ of the surface $S$ so
that, given any other resolution $\RR\succeq\RR^0$ and
any associated parametrization $\Phi$, the composition
$h^\Phi = h\circ\Phi$ on the open cylinder $\SSS^1\times]0,\eps]$
writes
\begin{equation}\label{eq:hphi}
h^\Phi(\vp,r)=P(r)+r^sH(\vp,r)
\end{equation}
where $s\in\Q_{\geq 0}$, $P(r)$ is a $\Q$-generalized polynomial and
$H$ is an u-a-r-a function that extends continuously to the bottom
circle $\CC$ such that $H|_\CC$ is either not constant if
$s<+\infty$ or, for $s=\infty$, $H$ is identically zero  and $P(r)$
is a convergent Puiseux series. The first summand of the coefficient
of $\rd r\rd \vp$ in (\ref{eq:coeffients-metric-bis}) is just the
partial derivative $(h^\Phi)_\vp$ (identically zero if $s=\infty$),
equal to $r^sH_\vp$ by (\ref{eq:hphi}). Since $H\in\AAA_{\geq 0}$,
we get Inequality (\ref{eq:estimates_s}). Moreover,
Proposition~\ref{prop:(z,S)-relative-principal-expansion} also
ensures that the exponent $s$ does not depend on the given
resolution $\RR$ (or on the parametrization $\Phi$) as long as it
dominates $\RR^0$. This completes the proof of the lemma.
\end{proof}

Now consider the function $f=f_0\circ\beta$ which is an analytic
function in a neighborhood of $E$ in $M\times\R$. We can assume that
$f_0(\oo)=0$ so that $f|_E\equiv 0$. Applying
Proposition~\ref{prop:(z,S)-relative-principal-expansion} to $f$,
there exists a resolution $\RR$ of $S$ and an associated
parametrization $\Phi$ such that expression (\ref{eq:expression-fR})
is valid: We can write either $f^\Phi=f^\Phi(r)$ as a convergent
Puiseux series only depending on $r$ or else
\begin{equation}\label{eq:fphi}
f^\Phi= a_0r^{\alpha_0} + \ldots + a_mr^{\alpha_m}+
r^{\alpha}F(\vp,r)=P(r)+r^{\alpha}F(\vp,r),
\end{equation}
where $a_j\in\R\setminus\{0\}$, $0 \leq
\alpha_0<\cdots<\alpha_m<\alpha$ are non-negative rational numbers
and $F$ is u-a-r-a and extends continuously to the whole cylinder
and the restriction $F|_\CC$ is not a constant function. Recall
moreover that
 $P(r)$, as well as the exponent
$\alpha$, are independent of the parameterization and of any
resolution $\RR'$ which dominates $\RR$. Therefore, we can suppose
that $\RR$ and $\Phi$ are chosen such that the expressions
(\ref{eq:coefficients_metric}) for the coefficients of the
transformed metric in Lemma~\ref{lm:expression_transf_metric} also
hold.

In what follows, we treat the degenerate case when $f^\Phi$ only
depends on $r$ as the case (\ref{eq:fphi}) with $\alpha$ as big as
we want but without requiring that $F|_C$ is not constant.

\smallskip
Up to a multiplication by a function that does not vanish on the
open cylinder, the differential equation provided by the transformed
vector field $\bar{\xi}$ writes:
\begin{equation}\label{eq:expression_xi}
\left\{
\begin{array}{rcl}\vspace{4pt}
\dot{r} & =& [P'(r)+(r^\alpha F)_r ] C\,\, -   \,\, r^\alpha F_\vp B\\
\vspace{4pt}
\dot{\vp} & = &   [P'(r)+(r^\alpha F)_r] B\,\, + r^\alpha F_\vp A
\vspace{4pt}
\end{array}
\right.
\end{equation}

We have several cases to deal with.

\medskip\noindent
{\em Case (1):} $\wt{\alpha}=2N-2+\alpha<\min\{s+\alpha_0-1,2eN+\alpha_0-2\}$. \\
From the expression of $\dot{\vp}$ in (\ref{eq:expression_xi}) we obtain
\begin{center}
\vspace{4pt}
$\dot{\vp}=r^{\wt{\alpha}}(N^2 F_\vp+\Delta), \mbox{
whith } \Delta\in\AAA_{>0}. $
\vspace{4pt}
\end{center}
On the other hand, from
\begin{center}
\vspace{4pt}
$ 2eN+\alpha-1\geq 2eN+\alpha_0-1>\wt{\alpha}+1\;\;$ and
$\;\;s+\alpha\geq s+\alpha_0>\wt{\alpha}+1$
\vspace{4pt}
\end{center}
we deduce $\dot{r}\in\AAA_{>\wt{\alpha}+1}$. Eventually $\bar{\xi}$
is in the non-monodromic case (\ref{eq:nonmon}) of Proposition
\ref{pro:dic-or-nonmon}.

\medskip\noindent
{\em Case (2):} $\wt{\alpha}=2N-2+\alpha\geq\min\{s+\alpha_0-1,2eN+\alpha_0-2\}$. \\
Using (\ref{eq:estimates_s}) in this case, we find $\alpha>\alpha_0$ and
thus $P\neq 0$ and $a_0\alpha_0\neq 0$.
We distinguish two sub-cases:

\smallskip\noindent
{\em Case (2a):} $2eN+\alpha_0-2<s+\alpha_0-1$. \\
 We deduce first
that $\dot{\vp}=r^{2eN+\alpha_0-2}(G_\vp+\Delta)$ where
\begin{equation}\label{eq:dotphi}
G=\left\{
\begin{array}{ll}
\vspace{4pt}
\alpha_0 a_0 R , & \hbox{ if $2eN+\alpha_0-2<\wt{\alpha}$;} \\
\vspace{4pt}
\alpha_0 a_0 R+N^2F & \hbox{ if $2eN+\alpha_0-2=\wt{\alpha}$.}
\end{array}
\right.
\end{equation}
and $\Delta\in\AAA_{>0}$. We observe that the function
$r^{2eN+\alpha_0-2}G$ is of the form $g^\Phi=g\circ\Phi$ for some
ramified-analytic function $g$ on a neighborhood of $E$ in
$M\times\R_{\geq 0}$. Thus the function $G$ is continuous on the
cylinder and u-a-r-a. \\
On the other hand, the term $r^\alpha F_\vp\, B$ in the
expression for $\dot{r}$ in (\ref{eq:expression_xi}) belongs to
$\AAA_{>2eN+\alpha_0-1}$. Thus
$\dot{r}=r^{2eN+\alpha_0-1}(a_0\alpha_0 U+\Upsilon)$ for
$\Upsilon\in\AAA_{>0}$.\\ If $G|_{\CC}$ is not constant, our situation is
non-monodromic in the sense of Proposition \ref{pro:dic-or-nonmon}. Otherwise,
 thanks to Lemma~\ref{lm:U}, it is the dicritical case of Equation (\ref{eq:dic})
with $\mu=2eN+\alpha_0-1$.

\medskip\noindent
{\em Case (2b):} $s+\alpha_0-1\leq 2eN+\alpha_0-2$. \\
This case is the most difficult since several of the terms
involved in the expression of $\dot{\vp}$ may be of the same order
with respect to $r$ so that all the ``initial parts" which are
derivatives with respect to $\vp$ of a function may cancel.

From Equation (\ref{eq:estimates_s}) and $m\geq N$,
we first find
\vspace{4pt}
\begin{equation}\label{eq:estimate-alpha}
\alpha+2eN-1\geq \alpha+s\geq 2eN+\alpha_0+1.
\vspace{4pt}
\end{equation}
Using Equations (\ref{eq:coefficients_metric}) and (\ref{eq:estimate-alpha})
we get $r^\alpha F_\vp B\in\AAA_{\geq
2eN+\alpha_0+1}$. Thus, in (\ref{eq:expression_xi}), we obtain
\begin{equation}\label{eq:dot_r_final}
\vspace{4pt} \dot{r}=r^{2eN+\alpha_0-1}(a_0\alpha_0
U+\Upsilon),\;\;\mbox{ for }\Upsilon\in\AAA_{>0}.
\end{equation}
Using again (\ref{eq:estimate-alpha}) we find the following estimates on
the order of some terms in the expression of
$\dot{\vp}$:
\vspace{4pt}
\begin{equation*}
\begin{aligned}
\vspace{4pt}
P'(r)\overline{B} & \in\AAA_{>2eN+\alpha_0-2},\\
\vspace{4pt}
(r^\alpha F_\vp) r^{eN+m-1}\overline{A}_1 & \in\AAA_{>2eN+\alpha_0},\\
\vspace{4pt}
(r^\alpha F_\vp) r^{2eN-2}\overline{A}_2 & \in\AAA_{\geq 2eN+\alpha_0} \\
\vspace{4pt}
(r^\alpha F)_\vp B & \in\AAA_{\geq 2eN+\alpha_0}.
\end{aligned}
\end{equation*}

This allow us to write $\dot{\vp}$ as
\begin{equation}\label{eq:dot-phi-final}
    \dot{\vp}=-P'(r)[r^s H_\vp
    +r^{2eN-1}R_\vp]+r^{\wt{\alpha}}(N^2+\psi(r))F_\vp
    +\Delta=G_\vp+\Delta,
\end{equation}
where $\Delta\in\AAA_{>2eN+\alpha_0-2}$ and
$$
G=-P'(r)[r^s H
    +r^{2eN-1}R]+r^{\wt{\alpha}}(N^2+\psi(r))F.
$$
Once again $G=g^\Phi=g\circ\Phi$ for some function $g$ in a
neighborhood of $E$ in $M\times\R_{\geq 0}$ which is ramified
analytic along $E$. From the definition of $R$ and
Remark~\ref{rm:about-F} for $H$ and $F$, the function $g$ depends
only on $f$ and $\beta$ but not on the resolution $\RR$ or on an associated
parameterization $\Phi$. So, up to further finitely
many blowing-ups and applying
Proposition~\ref{prop:(z,S)-relative-principal-expansion}, we can
assume that
$$
G(\vp,r)=Q(r)+r^\rho\wt{G}(\vp,r)
$$
where $Q$ is a $\Q$-generalized polynomial, $\rho\in\Q_{>0}$ and
$\wt{G}$ is an u-a-r-a function which extends continuously to the
bottom circle $\CC$ with, either $\wt{G}\mid_\CC$ is not constant or
$\rho$ can be chosen as large as we want (we just need $\rho>2eN+\alpha_0-2$). \\
Two cases are to be considered:\\
- If $\rho\leq 2eN+\alpha_0-2$, Equation (\ref{eq:dot-phi-final})
writes $\dot{\vp}=r^\rho(\wt{G}_\vp+\wt{\Delta})$ for
$\wt{\Delta}\in\AAA_{>0}$. Combined with (\ref{eq:dot_r_final}), we
find a non-monodromic situation (\ref{eq:nonmon}). \\
- If $\rho>2eN+\alpha_0-2$ then we are in the dicritical situation (\ref{eq:dic})
with $\mu=2eN+\alpha_0-1$ thanks to Lemma~\ref{lm:U}.

\vspace{5pt} This finishes the proof of the main theorem in the CTC
case.
\bigskip
\noindent{\bf Open tangent cone case.}
\\
Let us have a quick look at the open tangent case (OTC).

We first choose orthonormal coordinates $x =(x_1,\ldots,x_n)$.
Consider the opening blow-up mapping
$\beta:M\times\R_{\geq 0} \to \R^n$,
 $(\yy,z) \to z\yy$, where $M = \SSS^{n-1}$, as in
(\ref{eq:map-beta}), and let $S=\beta^{-1}(S_0)$. Let $\RR =
(\wt{S},\wt{E},\sigma)$ be a $(S,z)$-resolution of $S$ as in
Theorem~\ref{th:reduction-singularities} and pick an associated
parameterization $\Phi: \SSS^1\times[0,\eps] \to S$ satisfying the
conditions of Proposition~\ref{pro:parameterization-of-surfaces}.

Let $\wt{\bh}=(\beta\circ\Phi)^\ast\bh$ be the pull-back metric in
the open cylinder. With computations similar to those done in the
proof of Lemma \ref{lm:expression_transf_metric}, and using
Remark~\ref{rk:derivative-wrt-r}, we can write
\begin{center}
\vspace{4pt} $\wt{\bh}=(1+ \overline{A})\rd r^2 + 2r\overline{B}\rd r\rd \vp + r^2U\rd \vp^2$,
\vspace{4pt}
\end{center}
where $U=\sum_{i}(w_i)_\vp^2$, $\overline{A} =r^2\sum_{i}(w_i)_r^2$ and
$\overline{B} = r\sum_i(w_i)_r(w_i)_\vp$, since $\sum_i w_i^2=1$.
We note that $\overline{A},\overline{B}\in \AAA_{>0}$.

Writing $f^\Phi = f_0\circ\beta\circ\Phi$, the pull-back
$\bar{\xi}=(\beta\circ\Phi)^\ast\nbh f_0$ of the restricted
gradient vector field has the following associated system of
differential equations (up to the multiplication by the determinant of
the metric $\wt{\bh}$):
\vspace{4pt}
\begin{equation}\label{eq:differential-eq-xitilda-OTC}
\vspace{4pt}
\left\{
\begin{array}{l}
\vspace{4pt}
\dot{r}=r^2U(f^\Phi)_r - r\overline{B}(f^\Phi)_\vp\\
\vspace{4pt}
\dot{\vp}=-r\overline{B}(f^\Phi)_r+(1+\overline{A})(f^\Phi)_\vp.
\end{array}
\right.
\end{equation}

We consider cases $(a)$ or $(b)$ of Proposition~\ref{prop:(z,S)-relative-principal-expansion}
for the function $f:=f_0\circ\beta$.

\medskip
In case $(a)$ the function $f$ depends only on $z$ and thus
$(f^\Phi)_\vp\equiv 0$. Dividing
(\ref{eq:differential-eq-xitilda-OTC}) by $r(f^\Phi)_r$, which does
not vanish on the open cylinder, we obtain the dicritical situation
of Proposition~\ref{pro:dic-or-nonmon}.

\medskip
In case $(b)$, we assume that the resolution $\RR$ is such that
$$
f^\Phi(\vp,r)=P(r)+r^\alpha F(\vp,r),
$$
where $P(r)=a_0r^{\alpha_0}+\cdots+a_mr^{\alpha_m}$,
$\alpha_m<\alpha$ if $a_0 \neq 0$, and $F$ extends continuously to
the bottom circle $\CC$ and its restriction $F|_\CC$ is not
constant.

\noindent If $\alpha\leq 1$, Equations
(\ref{eq:differential-eq-xitilda-OTC}) become
\begin{center}
\vspace{4pt} $\dot{r} \in \AAA_{\geq\alpha+1}$ and
$\dot{\vp}=r^\alpha[F_\vp + \Delta], $ \vspace{4pt}
\end{center}
where $\Delta \in \AAA_{>0}$. We have a non-monodromic situation as
in (\ref{eq:nonmon}) and we are done.

\noindent If $\alpha>1$, we have two sub-cases:

\noindent - Case $\alpha=\alpha_0$. This means that $P\equiv 0$. Thus
$$
r(f^\Phi)_r=r^\alpha(\alpha F+rF_r) \mbox{ and
}(f^\Phi)_\vp=r^\alpha F_\vp.
$$
Using Remark~\ref{rk:derivative-wrt-r} and Equation (\ref{eq:differential-eq-xitilda-OTC}),
we find $\dot{\vp}=r^\alpha(F_\vp+\Delta)$ where $\Delta\in\AAA_{>0}$. We still have
$\dot{r}\in\AAA_{\geq\alpha+1}$ and thus we obtain a
non-monodromic situation.

\noindent Case $\alpha>\alpha_0$. We deduce
\begin{center}
\vspace{4pt}
$\dot{r}=r^{\alpha_0+1}(a_0\alpha_0
U+\Upsilon)\;\;$ and $\;\;\dot{\vp}\in\AAA_{>\alpha_0}
$
\vspace{4pt}
\end{center}
with $\Upsilon\in\AAA_{>0}$. We obtain the dicritical situation
(\ref{eq:dic}) with $\mu=\alpha_0+1$ thanks to Lemma~\ref{lm:U}.

\medskip
 This finishes all the cases and the proof of the Main
Theorem~\ref{thm:MainResult}.
\section{Consequences}\label{section:consequences}
Now we prove Corollary \ref{cor:Pfaffian} and Theorem~\ref{cor:formal-separatrix} 
as consequences of our  main result, Theorem \ref{thm:MainResult}. We also sketch 
a proof of the more elementary Proposition~\ref{pro:trajectory-to-origin}.

\medskip
\noindent {\em Proof of Corollary~\ref{cor:Pfaffian}.} Suppose that
$\absgamma\subset S_0$, a connected component of
$X\setminus\{\oo\}$. Let $\beta$ be an opening blowing-up of $S_0$
and let $\RR = (\wt{S},\wt{E},\sigma)$ be a resolution of the
surface $S=\beta^{-1}(S_0)$.  Theorem \ref{thm:MainResult} ensures
that the lifting  $\LL = (\beta\circ\sigma)^{-1}(\absgamma)$
accumulates at a single point $p$ of $\wt{E}$. Thus $\LL$ is
contained in a simply connected semi-analytic open set of the strict
transform $S'=\sigma^{-1}(S)$ where the foliation $\mathcal{F}$ has
no singularities.
%(in a sufficiently small neighborhood of $p$ in $\clos(S')$).
Using Haefliger's theorem (\cite{Hae,Mou-R,Sp}), we deduce that
$\LL$ is a Rolle's leaf of $\mathcal{F}$ and thus a pfaffian set.
Its image $\absgamma = \beta(\sigma(\LL))$ is a sub-pfaffian set in
$\R^n$ since $\sigma$ and $\beta$ are proper mappings.
\hfill{$\square$}

\bigskip
\noindent 
{\em Proof of Proposition \ref{pro:trajectory-to-origin}.}
 Let $S_0$ be a connected component of $X\setminus\{\oo\}$,
homeomorphic to the semi-open cylinder $\SSS^1\times]0,\eps]$ for
$\eps$ small. Denote by $\CC_\eps$ the image of
$\SSS^1\times\{\eps\}$ by such homeomorphism. We consider two cases.

\medskip
\noindent
{\bf Case 1}: The function $f_0|_{S_0}$ is negative and has no critical point.\\
Let $a_0<0$ be the minimum of the function $f_0$ restricted to
$\CC_\eps$. Consider a point $p\in S_0$ for which $a_0<f_0(p)<0$ and
let $\gamma_p$ be the trajectory of the restricted gradient
vector field $\nabla_{\bh}f_0$ starting at $p$. Since
$t\to f_0(\gamma_p(t))$ is increasing, $\gamma_p$ is defined for all
positive $t$ and $\lim_{t\to\infty}\gamma_p(t)=\oo$.

\medskip\noindent
{\bf Case 2.} Suppose $f_0^{-1}(0)\cap S_0 \neq \emptyset$.\\
Up to taking $-f_0^2$ instead of $f_0$, we assume that $f_0 \leq 0$
on $\clos(S_0)$ and that $Z_0 = f_0^{-1}(0) \cap \clos (S_0) (=
\crit (f_0\mid_{S_0}))$ intersects with $S_0$.\\
% We choose $\eps$ small enough so that the intersection $Z_0 \cap \CC_\eps$
% is transverse.
Let $U$ be a connected component of $S_0\setminus Z_0$. Since $S_0$
is topologically a cylinder and $Z_0$ consist of finitely many
analytic half-branches at $\oo$ (up to taking $\eps$ smaller), the component 
$U$ is simply connected. In fact, we can take a triangle
$\Sigma$ in the plane with sides $\sigma_1,\sigma_2,\sigma_3$ and a
continuous map $\kappa:\Sigma\to\clos(U)$ restricting to a
diffeomorphism between $\Sigma\setminus (\sigma_1\cup\sigma_2)$ and
$U$, sending each of the sides $\sigma_1$ or $\sigma_2$
homeomorphically to a half-branch of $Z_0$ and sending the side
$\sigma_3$ onto $\clos(U)\cap\CC_\eps$. Note that, if there are at least two
half-branches of $Z_0$ then $\kappa$ is a homeomorphism, otherwise
$\clos(U)=\clos(S_0)$ and $\kappa$ is just a quotient map gluing the
two sides $\sigma_1,\sigma_2$ together.

\smallskip
Since $\clos (U)$ is invariant, we can carry $\nabla_{\bh}f_0$ onto 
$\Sigma$ via $\kappa$ (which is singular along
$\sigma_1\cap\sigma_2$). It will be denoted by $\chi_0$ while we
will denote $g_0=\kappa^* f_0$ and
$\bv=\sigma_1\cap\sigma_2=\kappa^{-1}(\oo)$. We just have to prove that
there exists a trajectory of $\chi_0$ accumulating to $\bv$.

\smallskip
We use the following properties:
\begin{enumerate}
\item Up to taking a smaller $\eps$, each point
$x \in \sigma_1\cup\sigma_2\setminus\{\bv\}$ is the accumulation point
of a unique trajectory of $\chi_0$.
\item No non-stationary trajectory of $\chi_0$ can have its $\alpha$-limit 
point and its $\omega$-limit point both in $\sigma_1\cup\sigma_2\setminus\{\bv\}$.
\end{enumerate}
The first property is easy to prove using local coordinates or using
the classical \L ojasiewicz's retraction map of the gradient (cf.
\cite{Loj}). The second one is a consequence of the fact that
$g_0(\sigma_1\cup\sigma_2)=0$ and that $g_0$ increases along
trajectories of $\chi_0$.

\smallskip\noindent
{\bf Claim.} \em There exists $t_\eps <0$ such that for $t\in
]t_\eps,0[$, the fiber $g_0^{-1} (t)\subset \Sigma$ is connected.\em
\\
\em Proof of the Claim. \em Each connected component of a (non-empty) 
fiber $g_0^{-1}(t)$
with $t<0$ is either homeomorphic to a circle or to a closed segment
with extremities on $\sigma_3$. Since $g_0$ has no critical points in the 
interior of the triangle $\Sigma$, the first case cannot occur. 
On the other hand, the restriction $g_0|_{\sigma_3}$ vanishes only 
at the extremities. If we take $t_\eps$ equal to the maximum of the critical 
values of this restriction, $g_0$ takes any value $t\in]t_\eps,0[$ 
exactly twice along $\sigma_3$ and this proves the claim.
\hfill{$\square$}

\smallskip\noindent
For $i=1,2$, choose $x_i\in\sigma_i\setminus\{v\}$ and let $\g_i$ be
the trajectory of $\chi_0$ accumulating to $x_i$. Take $t_0$ with
$t_\eps<t_0<0$ such that $\g_i$ cuts the fiber $g_0^{-1}(t_0)$,
necessarily in a single point $y_i$. Let $I$ be the closed segment
in $g_0^{-1}(t_0)$ joining $y_1$ and $y_2$. Consider the domain $\Lambda$
in $\Sigma$ enclosed by the piecewise smooth
closed curve formed by the segments $[x_i,\bv]$ in $\sigma_i$,
$[y_i,x_i]$ in $\g_i$ and $I$. By construction, $\chi_0$ enters $\Lambda$
only through the segment $I$ and leaves $\Lambda$ positively invariant.

For each $z$ in one of the semi-open sides $[x_1,\bv[$ or $[x_2,\bv[$, 
thanks to properties (1) and (2) above,  there exists a unique point
$\tau(z)\in I$ such that the trajectory starting at $\tau(z)$
accumulates to $z$ for positive infinite time. Moreover, orienting positively
$I$ from $y_1$ to $y_2$, we find that
$\tau(z)<\tau(w)$ whenever $z\in[x_1,\bv[$ and $w\in[x_2,\bv[$, or
$z \in [x_1,\bv[$ and $w \in ]z,\bv[$, or $w\in[x_2,\bv[$ and $z \in ]w,\bv[$.

Let $a=\sup\{\tau(z)/z\in[x_1,\bv[\}$ and $b=\inf\{\tau(z)/z\in[x_2,\bv[\}$. 
Thus $a\leq b$ and for every point $y\in[a,b]$ in the segment $I$, the trajectory of 
$\chi_0$ starting at $y$ accumulates to $\bv$. 
\hfill{$\square$}

\bigskip
\noindent {\em Proof of Theorem \ref{cor:formal-separatrix}.} We
begin with the definition of {\em formal asymptotic expansion}. A
formal curve $\widehat{\Gamma}$ at the origin of $\R^n$ is a formal
Puiseux parameterization $\wht{\Gamma}(T)=(\wht{\Gamma}_1(T),\ldots,
\wht{\Gamma}_{n-1}(T),T^N)$, where each $\wht{\Gamma}_i$ is a formal
power series in the single indeterminate $T$ with no constant term. A
trajectory $\gamma$ has an asymptotic expansion $\wht{\Gamma}$ at
the origin if it can be smoothly parameterized as
\begin{center}
\vspace{4pt}
$
z\to \gamma(z)=(\gamma_1(z),\ldots,\gamma_{n-1}(z),z^N), z>0
$
\vspace{4pt}
\end{center}
and the component $\gamma_i$ admits $\wht{\Gamma}_i$ as expansion.

\smallskip
\noindent
If the critical locus
of $f_0|_{S_0}$ is not empty then, each connected component of this locus in a semi-analytic
invariant set the restricted gradient, so point (ii) is true.

\smallskip
\noindent
We assume the restricted gradient of $f_0$ does not vanish in $S_0$. \\
Since any restricted gradient trajectory $\gamma$ is not oscillating
at $\oo$, the function $z \circ \gamma (t)$ decreases strictly to $0$
as $t\to +\infty$. Thus it admits a continuous parameterization $z\to \gamma(z)
=(\gamma_1(z),\ldots, \gamma_{n-1}(z),z)$, for $z\geq 0$, which is analytic for $z>0$.

\smallskip
Let $\beta$ be an opening blowing-up of $S_0$ and $S=\beta^{-1}(S_0)$.
Let $\RR=(\wt{S},\wt{E},\sigma)$ be a resolution of $S$ and let
$\RR'=(S',E',\sigma')$ be the strict resolution associated to $\RR$
as in Theorem~\ref{th:reduction-singularities}. Let
$\wt{\bh}=(\beta\circ\sigma)^*{\bf g}$ and
$\wt{f}=f_0\circ\beta\circ\sigma$. The metric $\wt{{\bf h}}$
degenerates along the divisor $\wt{E}$, so the gradient vector field
$\nabla_{\wt{\bh}}\wt{f}$ is defined only on
$\wt{S}\setminus\wt{E}$. However, we can define a one-dimensional
analytic foliation $\wt{\FF}$ in $\wt{S}$ whose singular set $\sing(\wt{\FF})$
is a finite subset of $\wt{E}$ and
such that $\nabla_{\wt{{\bf h}}}\wt{f}$ is a local generator of
$\wt{\FF}$ at any point of $\wt{S}\setminus\wt{E}$.

The reduction of singularities of an analytic foliation on a smooth surface
(\cite{Sei}) and the compactness
of $\wt{E}$ ensure we can assume that any singularity $p\in \sing(\wt{\FF})$
is simple: a local generator $\xi_p$ has a non-nilpotent linear part at $p$
with eigenvalues $\lambda$ and $\mu\neq 0$.

Let $\Sigma:=E'\cap \sing(\wt{E})$ be the finite set of singular points
of the strict divisor $E'$.

\smallskip\noindent
In order to complete the proof, we will show that only situations
(1) or (2) below happen and the result holds true in both cases.

\smallskip\noindent
(1) {\em Dicritical situation:} There is a point $p\in E'\setminus
\Sigma$, not singular for $\wt{\FF}$, and such that $E'$ is transverse
to $\wt{\FF}$ at $p$. The leave $\LL_p$ of $\wt{\FF}$ through $p$ is
a non-singular analytic curve and transverse to $E'$ at $p$. The
image $\beta\circ\sigma(\LL_p\cap S')$ is an analytic separatrix for
the restricted gradient on $S_0$ accumulating to the origin. In fact
through each point $q\in E'$ in a neighborhood of $p$, there is a
unique analytic separatrix through $q$.

\smallskip\noindent
(2) {\em A non corner singularity:} The strict divisor $E'$ is an
invariant set of $\wt{\FF}$ and there is a point $p\in (E'\setminus
\Sigma)\cap \sing(\wt{\FF})$. A local generator $\xi_p$ of
$\wt{\FF}$ at $p$ has a linear part with two real eigenvalues. One
eigen-direction is tangent to $E'$ and the other one is transverse
to $E'$. The theory of local invariant manifolds (see for instance
\cite{Hir-P-S}) provides a formal invariant non-singular manifold
$\widehat{W}$ at $p$  which is tangent to the transverse
eigen-direction\footnote{If the corresponding eigenvalue is non-zero,
Briot-Bouquet's theorem guarantees the convergence of $\widehat{W}$}.
We also get a $C^\infty$ invariant
manifold $W$ through $p$ having $\widehat{W}$ as asymptotic
expansion at $p$. The image $\beta\circ\sigma(W\cap S')$ is the
desired characteristic trajectory $\gamma$ of the restricted
gradient.

\medskip
\noindent
In order to find a contradiction, we assume that neither case (1) or (2) above holds.
Thus $E'$ is invariant for $\wt{\FF}$ and $\sing(\wt{\FF})\cap E'=\Sigma$.
\\
Since any point $p\in\Sigma$ is a simple
singularity, the two components of $\wt{E}$ at $p$ are the two local
analytic separatrices of $\wt{\FF}$ at $p$. \\
Let $\{Q_p^j\}_{j=1,2,3,4}$ be the open ``quadrants'' of
$\UU_p\setminus \wt{E}$ in a small coordinate neighborhood $\UU_p$
of $p$ in $\wt{S}$. Let $J(p)\subset\{1,2,3,4\}$ be the subset of
$j$ for which $Q_p^j\subset S'$ (see part (iv) of
Theorem~\ref{th:reduction-singularities}). For $j\in J(p)$, the
quadrant $Q_p^j$ is either: \\
-  of {\em saddle type}, if any trajectory of the restricted
gradient $\nabla_{\wt{\bf h}}\wt{f}$ through a point in $Q_p^j$
escapes from $Q_p^j$ for positive and negative time; \\
-  of {\em node-source type}, if any trajectory escapes for positive
time but accumulates to $p$ for negative time; \\
-  of {\em node-sink type}, if each trajectory escapes for negative
time but accumulates to $p$ for positive time.

\smallskip\noindent
We have two possibilities:

(a) Each quadrant $Q_p^j$ is of
saddle-type for all $p\in \Sigma$ and all $j\in J(p)$ or there are no
singularities at all ($\Sigma=\emptyset$). In this case, classical
arguments show that the dynamics is ``monodromic'' in a neighborhood
of $E'$ in $S'$: there exist an analytic half-branch $\Lambda$
through a point $q\in E'\setminus \Sigma$ contained in $S'$, transverse
to $\wt{\FF}$, a neighborhood $\Lambda_0$ of $q$ in $\Lambda$ and a
{\em Poincar\'{e} first return map} $P:\Lambda_0\to\Lambda$ such that
for $x\in\Lambda_0$ given, the leaf $\LL_x$ through $x$ cuts again
$\Lambda$ at $P(x)$ after visiting all the quadrants $Q_p^j$.
%(see Figure ...).
Since a gradient vector field cannot have closed orbits, $P$ has no
fixed points. Poincar\'{e}-Bendixson's type arguments imply
there are leaves of $\wt{\FF}$ in $S'$ accumulating to
the whole divisor $E'$, thus producing spiraling
trajectories of the restricted gradient, which contradicts
Theorem~\ref{thm:MainResult}.

(b) There is a quadrant $Q^{j_0}_{p_0}\subset S'$ of node type for
some singularity $p_0\in \Sigma$ and some $j_0\in J(p_0)$. Suppose for
instance that it is of node-source type (the case of node-sink type
is analogous in reversing time). Consider one of the local analytic
separatrices of $\wt{\FF}$ at $p_0$. There is a connected component
of $E'\setminus \Sigma$, say $\EE_1$, which meets such a separatrix. Since
$E'$ is invariant, $\EE_1$ is a leaf of $\wt{\FF}$. Let $p_1\in \Sigma$ be
the other accumulation point of $\EE_1$, different from $p_0$. The
flow-box theorem shows that there is a point $q_0\in Q^{j_0}_{p_0}$
such that the trajectory $\gamma_0$ issued from the point $q_0$
visits a point in a quadrant $Q^{j_1}_{p_1}$ for some $j_1\in
J(p_1)$.
\\
By definition of a node-source type, the quadrant $Q^{j_1}_{p_1}$ cannot be of node-source
type. If $Q^{j_1}_{p_1}$ is of saddle-type, we consider the connected component
$\EE_2$ of $E'\setminus \Sigma$ meeting the local analytic separatrix at
$p_1$ which is not contained in $\EE_1$ and the point $p_2\in \Sigma$ such
that $\clos(\EE_2)\setminus \EE_2=\{p_1,p_2\}$. In this case, choosing
$q_0$ in the initial quadrant $Q^{j_0}_{p_0}$ sufficiently close to
$p_0$, we can suppose that the trajectory $\gamma_0$ also visits
some quadrant $Q^{j_2}_{p_2}$ for some $j_2\in J(p_2)$.
\\
Continuing this way, if all the visited quadrants are of saddle-type
we construct a sequence of singularities $p_1,p_2,\ldots$ different
from $p_0$. Since $\Sigma$ is finite, we create a cycle
$p_l,p_{l+1},\ldots,p_m=p_l$, with $l$ minimum for this property.
Since $\EE_{m}$ is not equal to $\EE_{l}$ (otherwise
$p_{m-1}=p_{l-1}$ against the minimality of $l$) we find three
local analytic separatrices through $p_l$, say $\EE_l, \EE_{l+1}, \EE_m$,
which is a contradiction with the fact that $p_l$ is a simple
singularity.

Thus, there exist $p_k\in \Sigma$, for some $k\geq 1$ and a quadrant $Q^{j_k}_{p_k}$
of node-sink type, for some $j_k\in J(p_k)$, which intersects the trajectory
$\gamma_0$. Then $p_0$ is the $\alpha$-limit point of $\gamma_0$ and
$p_k$ a $\omega$-limit point.
Its image $\beta\circ\sigma\circ\gamma_0$ is
a trajectory of the restricted gradient for which the origin is the $\alpha$
and the $\omega$ limit point which is impossible since the function $f_0$
increases strictly along this trajectory.
\section{Thanks}
This work started during the Thematic Program at the Fields Institute on \em O-minimal
Structures and Real Analytic Geometry January-June 2009. \em  The authors are both very grateful
to the Fields Institute and its staff for support, facilities and very good working conditions
they found there.

The first author would like to thank the Departamento de \'Algebra, Geometr\'{\i}a y
Topolog\'{\i}a of the University of Valladolid for the support and the working conditions
provided while visiting to complete this work.

The second author was partially supported by the research projects MTM2007-66262 (Ministerio de
Educaci\'{o}n y Ciencia) and VA059A07 (Junta de Castilla y Le\'{o}n) and by {\em Plan Nacional de
Movilidad de RR.HH. 2008/11, Modalidad "Jos\'{e} Castillejo"}.

The authors want to thank also O. LeGal for useful commentaries and
remarks.

\end{document}